\documentclass[11pt]{article}
\usepackage{amsmath,amsfonts,amssymb,amsthm,mathrsfs}
\usepackage{minitoc}

\usepackage[cmintegrals,libertine]{newtxmath}
\usepackage[cal=euler]{mathalfa}
\usepackage[mono=false]{libertine}
\useosf

\usepackage[dvipsnames]{xcolor}
\usepackage[a4paper,vmargin={3.5cm,3.5cm},hmargin={2.5cm,2.5cm}]{geometry}
\usepackage[font=sf, labelfont={sf,bf}, margin=1cm]{caption}
\usepackage{graphicx,graphics}
\usepackage{epsfig}%pour les eps
\usepackage{latexsym}%encore des symboles
\usepackage[applemac]{inputenc}
\linespread{1.2}
\usepackage{stmaryrd}
\usepackage{ae,aecompl}

\usepackage[english]{babel}
 \usepackage[colorlinks=true]{hyperref}
\usepackage{pstricks}
\usepackage{enumerate}
\usepackage[normalem]{ulem}

\renewcommand{\P}{\mathbb{P}}
\newcommand{\E}{\mathbb{E}}

\newcommand{\Cinfty}{\mathscr{C}_\infty}
\DeclareFixedFont{\beaupetit}{T1}{ftp}{b}{n}{2cm}

\newtheorem{theorem}{Theorem}[]
\newtheorem{definition}{Definition}[]
\newtheorem{proposition}[]{Proposition}
\newtheorem{open}[]{Open question}

\newtheorem{lemma}[]{Lemma}

\newtheorem{conjecture}[]{Conjecture}
\theoremstyle{definition}
\newtheorem*{remark}{Remark}

\def\build#1_#2^#3{\mathrel{
\mathop{\kern 0pt#1}\limits_{#2}^{#3}}}

\newcommand{\eps}{\varepsilon}

\def\Reff{R_{\rm eff}}

\title{Geometric and spectral properties of causal maps}
\author{Nicolas Curien, Tom Hutchcroft, and Asaf Nachmias}
             \begin{document}
             \maketitle

             \abstract{We study the random planar map obtained from a critical, finite variance, Galton-Watson plane tree by adding the horizontal connections between successive vertices at each level. This random graph is closely related to the well-known \emph{causal dynamical triangulation} that was introduced by Ambj\o rn and Loll and has been studied extensively  by physicists. 
             We prove that the horizontal distances in the graph are smaller than the vertical distances, but only by a subpolynomial factor: The diameter of the set of vertices at level $n$ is both $o(n)$ and $n^{1-o(1)}$.
              This enables us to prove that the spectral dimension of the infinite version of the graph is almost surely equal to 2, and consequently that the random walk is diffusive almost surely. 
              We also initiate an investigation of the case in which the offspring distribution is critical and belongs  to the domain of attraction of an $\alpha$-stable law for $\alpha \in (1,2)$, for which our understanding is much less complete.}
\begin{figure}[!h]
 \begin{center}
   \includegraphics[width=5cm,angle=90]{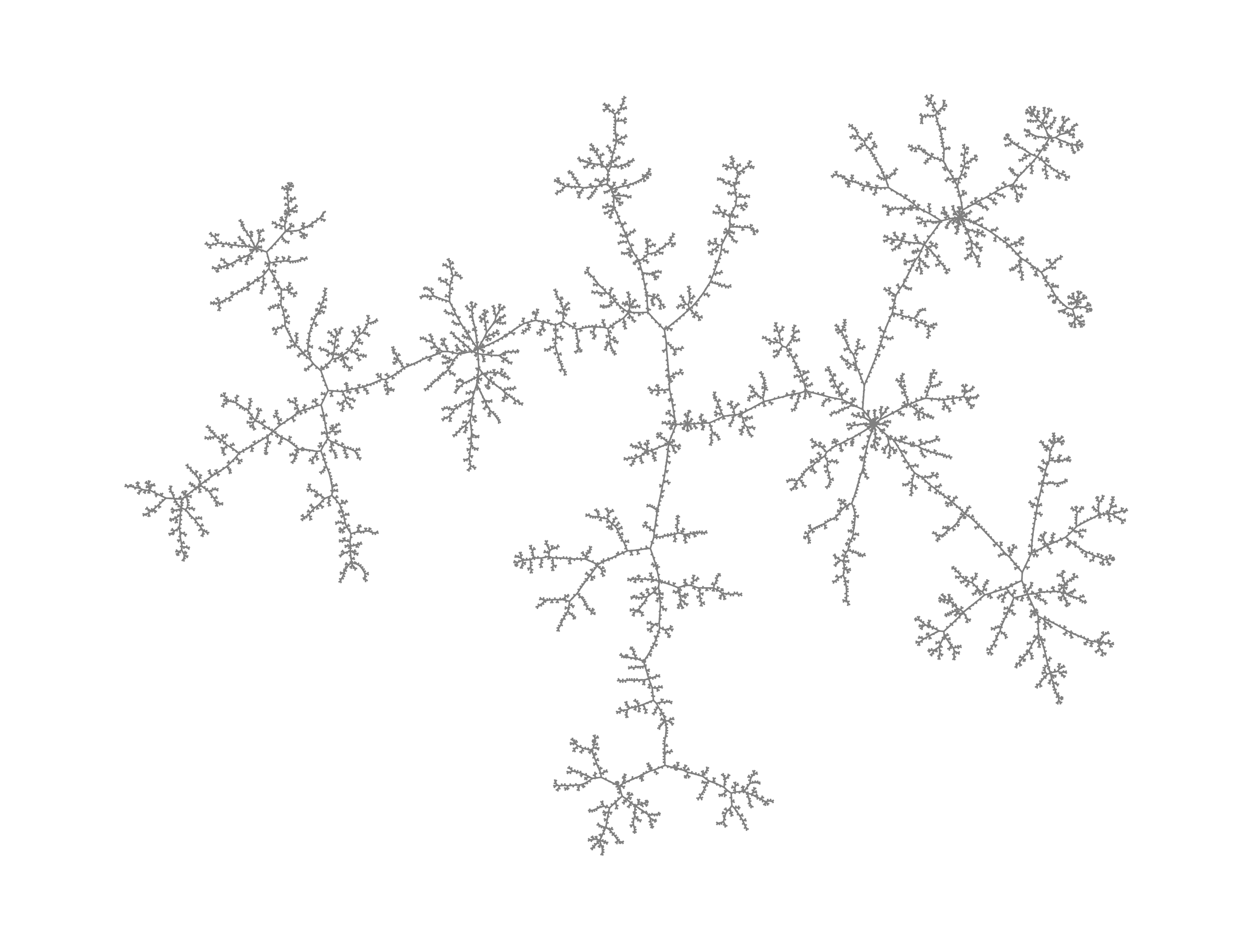}
 \includegraphics[width=5cm,angle=90]{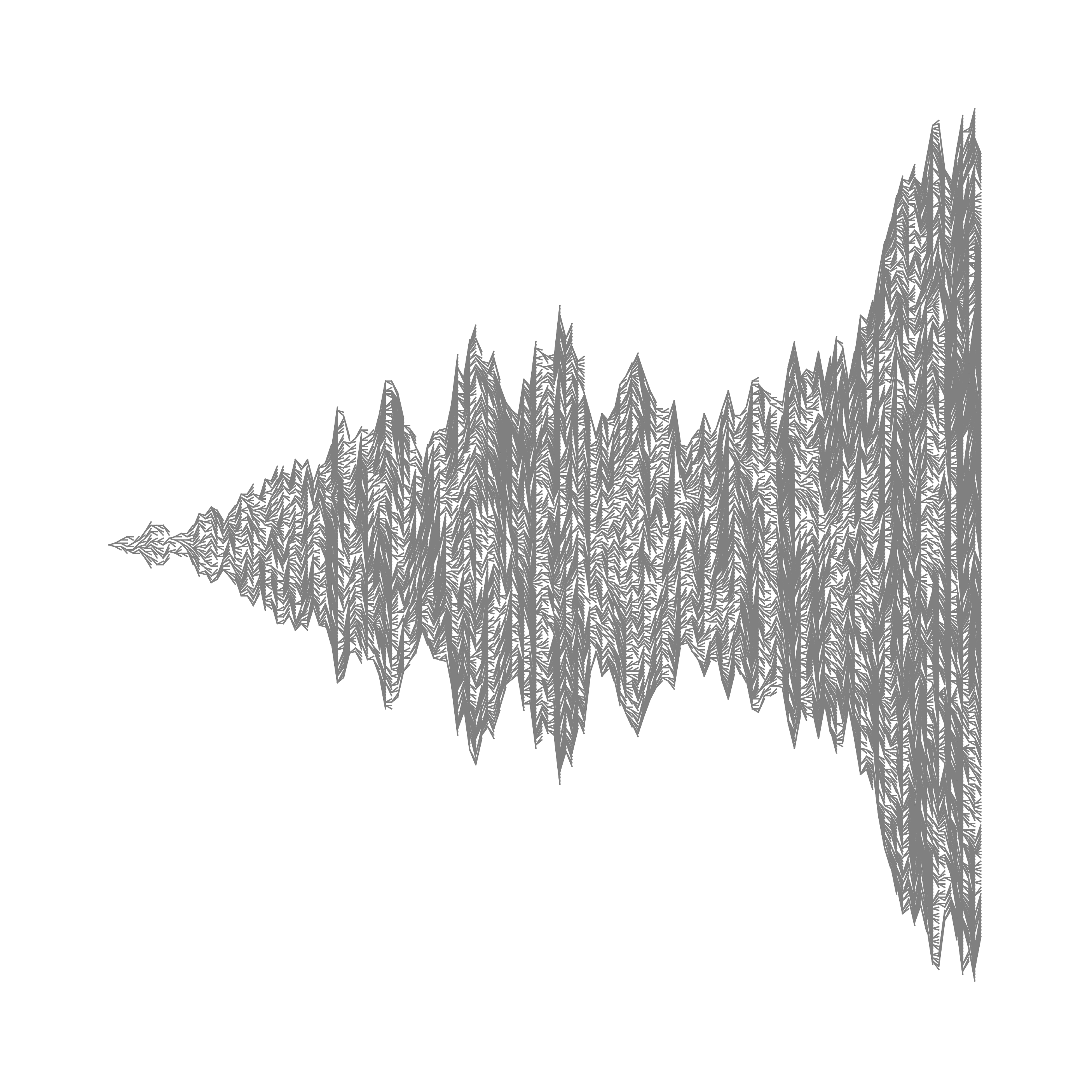}
   \includegraphics[width=5cm,angle=-1]{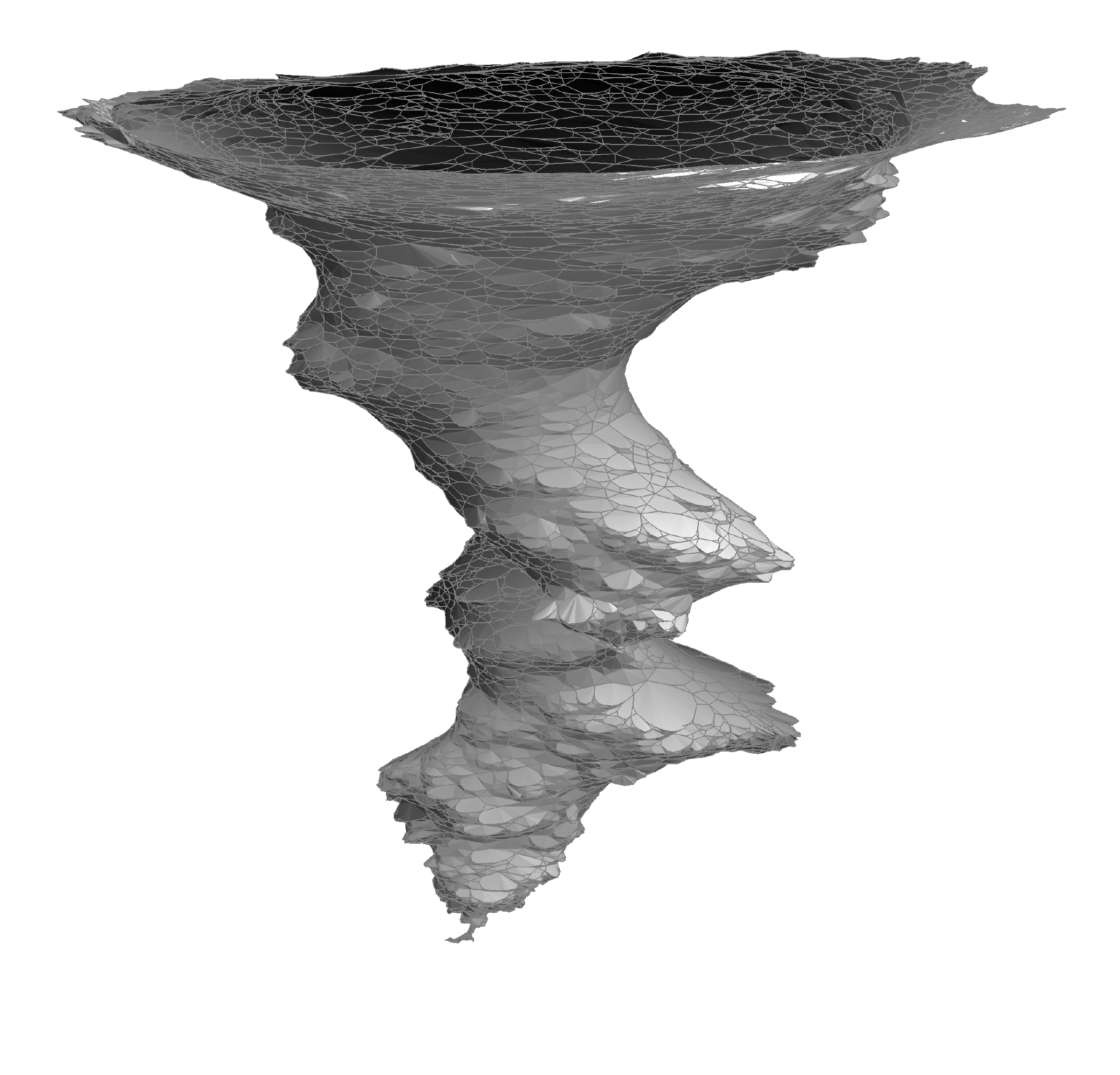}
 \caption{A piece of a large random Galton--Watson tree with finite variance represented in different ways (from left to right): a spring-electrical embedding, a layered representation and a 3D embedding of its associated causal map. \label{fig:nice}}
 \end{center}
 \end{figure}

\section{Introduction}
The \textbf{causal dynamical triangulation} (CDT) was 
% of 
% random planar maps 
% discrete quantum gravity 
introduced by theoretical physicists Jan Ambj\o rn and Renate Loll as a discrete model of Lorentzian quantum gravity in which space and time play different roles \cite{AmLo98}. Time is represented by a partition of the $d+1$-dimensional model into a sequence of $d$-dimensional layers with increasing distances from the origin. Although this model has been the subject of extensive numerical investigation \cite{AGJL12,AmJuLo05}, especially in dimension $2+1$ and $3+1$, very little is known analytically, let alone rigorously. 

In the case of dimension $1+1$, the $1$-dimensional layers are simply cycles, and causal triangulations are in bijection with plane trees, see e.g. \cite{DJW10,SYZ13}. Figure \ref{fig:causal-arbre} below illustrates the mechanism used to build a causal triangulation from a plane tree $\tau$: We first add the horizontal connections between successive vertices in each layer to obtain a planar map $ \mathsf{Causal}(\tau)$  living on the sphere, and then triangulate the  non-triangular faces of this map as shown in the drawing to obtain the triangulation $\mathsf{CauTrig}(\tau)$. See Section \ref{sec:causaltrig} and  \cite[Section 2.3]{DJW10} or \cite[Section 2.2]{SYZ13} for more details.

\vspace{1em}
 \begin{figure}[!h]
  \begin{center}
  \includegraphics[width=14cm]{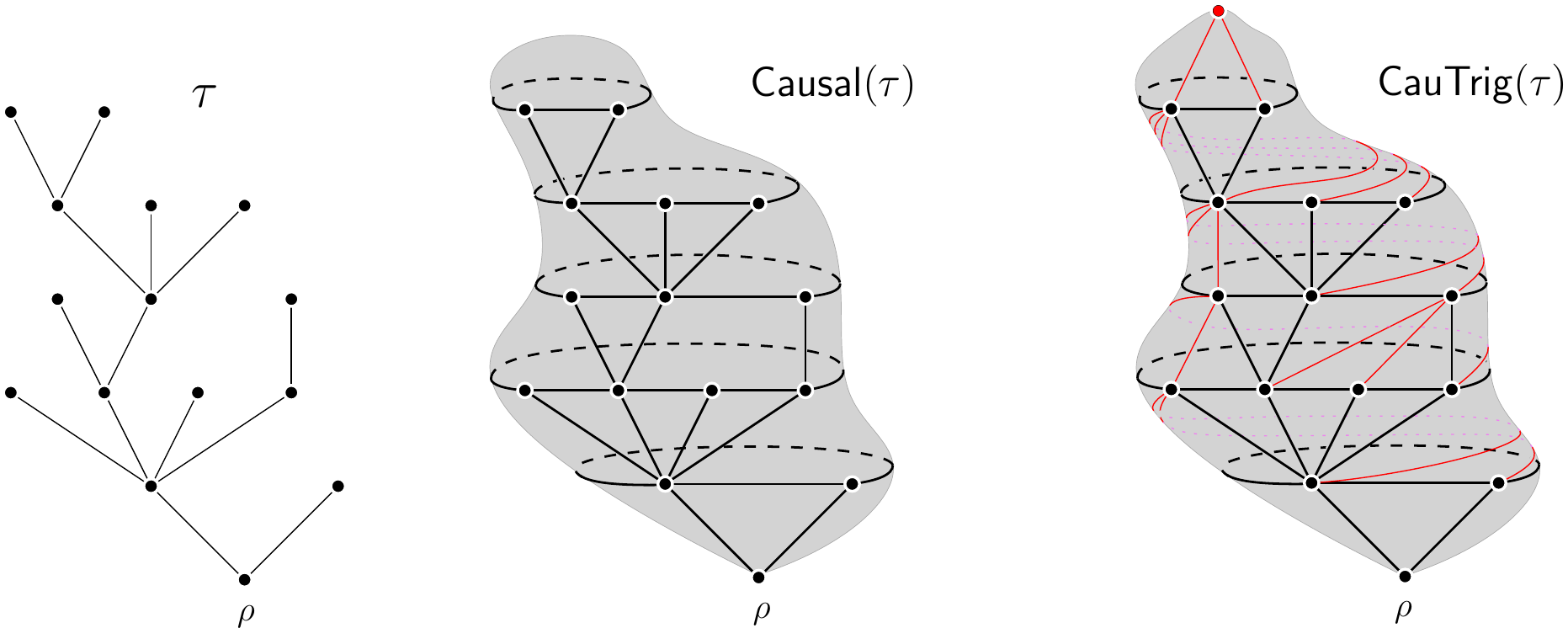}
  \caption{Left: A plane tree $\tau$. In any plane tree, there is a distinguished cyclic ordering of the vertices in each level. Center: The causal planar graph $ \mathsf{Causal}(\tau)$ built from $\tau$ by adding the horizontal connections between successive vertices in each level. Right: The causal triangulation $ \mathsf{CauTrig}(\tau)$ built from $ \mathsf{Causal}(\tau)$ by further triangulating the non-triangular faces from their top-right corners. \label{fig:causal-arbre}}
  \end{center}
  \end{figure}
\vspace{-1em}
% In a first approximation, finite causal triangulations can be seen as the causal maps $ \mathsf{Causal}(\tau)$ we obtain by adding the horizontal connections to plane trees.

The maps $\mathsf{Causal}(\tau)$ and $\mathsf{CauTrig}(\tau)$ are qualitatively very similar.
 We shall focus in this article on the model $ \mathsf{Causal}(\tau)$ (mainly to simplify our drawings) and refer the reader to Section \ref{sec:causaltrig} for extensions of our results to other models including causal triangulations. 

The geometry of large random plane trees is by now very well understood \cite{Ald91a,BK06,DLG02}. However, we shall see that causal maps have geometric and spectral properties that are dramatically different to the plane trees used to construct them. Indeed, the causal maps have much more in common with uniform random planar maps \cite{LeGallICM} such as the UIPT than they do with random trees.

\paragraph{Setup and results.} Suppose that $ \tau$ is a finite plane tree. We can associate with it a finite planar map (graph) denoted by $ \mathsf{Causal}(\tau)$ by adding the `horizontal' edges linking successive vertices in the cyclical ordering of each level of the tree as in Figure \ref{fig:causal-arbre}. If $\tau$ is an infinite, locally finite plane tree, performing the same operation yields an infinite planar map with one end, see Figure \ref{fig:nice}. 
% 
% In the space\,$\times$\,time CDT interpretation of the model, time is represented vertically by the generations whereas the graph at one given generation models space (in our model it is a vanilla cycle of length given by the number of individuals at that generation). 
The distance between a vertex $v$ of $\tau$ and the root $\rho$, called the \textbf{height} of $v$, is clearly equal in the two graphs $\tau$ and $\mathsf{Causal}(\tau)$. 
Thus, a natural first question is to understand how the distances between pairs of vertices at the \emph{same} height are affected by the addition of the horizontal edges in the causal graph.
We formalize this as follows: Let $\tau$ be a plane tree with root $\rho$. Let $[\tau] _{k}$ be the subtree spanned by the vertices of height at most $k$ and let $\partial [\tau]_{k}$ be the set of vertices of height exactly $k$. We define the \textbf{girth} at height $r$ of $\mathsf{Causal}(\tau)$ to be $$ \mathsf{Girth}_{r}\bigl(\mathsf{Causal}(\tau)\bigr) := \sup \left\{ \mathrm{d}_{\mathrm{gr}}^{ \mathsf{Causal}(\tau)}(x,y) : x,y \in \partial [\tau]_{r}\right\}, \quad \mbox{ where }\sup \varnothing =0,$$ and where $ \mathrm{d}_{\mathrm{gr}}^{ G}$ denotes the graph distance in the graph $G$. The triangle inequality yields the trivial bound $ \mathsf{Girth}_{r}(\mathsf{Causal}(\tau) ) \leq 2r$, so that the girth grows at most linearly.

 We will focus first on the case that the underlying tree $\tau$ is a random Galton-Watson tree whose offspring distribution $\mu$ is critical (i.e., has mean $1$) and has finite variance $0<\sigma^{2}<\infty$.
 The classical CDT model is related to the special case in which $\mu$ is a mean $1$ geometric distribution.
  Let $T$ be a $\mu$-Galton-Watson tree (which is almost surely finite) and let $T_{\infty}$ be a $\mu$-Galton-Watson tree conditioned to survive forever \cite{AD13,Kes86}. Let 
  % $\Cfin=\mathsf{Causal}(T)$ and 
  $\Cinfty=\mathsf{Causal}(T_\infty)$. It is well-known that $ \# \partial [T_{\infty}]_{r} \approx r$ under the above hypotheses on $\mu$. 
 Our first main result states that the addition of the horizontal edges to the causal graph makes the girth at height $r$ smaller, but only by a subpolynomial factor. 
  % Precise quantitative versions of item $(i)$ of this theorem are given in Section \ref{subsec:quant}. 
 % the width at height $r$ is made substantially smaller by the addition of 
 % , but we will see that the width at height $r$ in the causal version is significantly shortened:

\begin{theorem}[Geometry of generic causal maps]  \label{thm:geometry} Let $\mu$ be critical and have finite non-zero variance. Then
$$  (i) \quad  \frac{\log \mathsf{Girth}_{r}( \Cinfty )}{\log r} \xrightarrow[r\to\infty]{(a.s.)} 1 \quad \mbox{ and } \quad (ii) \quad \frac{ \mathsf{Girth}_{r}( \Cinfty )}{ r} \xrightarrow[r\to\infty]{( \mathbb{P})} 0.$$
\end{theorem}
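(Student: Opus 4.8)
The statement packages together three inequalities: the deterministic upper bound $\limsup_r\log\mathsf{Girth}_r(\Cinfty)/\log r\le 1$ from (i), the matching lower bound $\liminf_r\log\mathsf{Girth}_r(\Cinfty)/\log r\ge 1$ from (i), and the strictly sublinear estimate (ii). The upper bound is immediate: joining two vertices of $\partial[T_\infty]_r$ through their most recent common ancestor inside the tree $T_\infty\subseteq\Cinfty$ produces a path of length at most $2r$, so $\mathsf{Girth}_r(\Cinfty)\le 2r$ always. The real work therefore lies in (ii) and in the lower bound of (i); the plan is to prove (ii) first, then the lower bound, and to deduce the a.s.\ statements in (i) from tail estimates by Borel--Cantelli.

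\smallskip\noindent\textbf{The sublinear bound (ii).} Writing $S_r$ for the spine vertex of $T_\infty$ at level $r$, one has $\mathsf{Girth}_r(\Cinfty)\le 2\max_{z\in\partial[T_\infty]_r}\mathrm{d}_{\mathrm{gr}}^{\Cinfty}(S_r,z)$, so it suffices to build, for each fixed $\eps>0$ and with probability tending to $1$, a path of length at most $\eps r$ from $S_r$ to every level-$r$ vertex. I would do this by walking along the level-$r$ cycle of $\Cinfty$ while exploiting the self-similar structure of $T_\infty$ coming from Kesten's spinal decomposition: the set $\partial[T_\infty]_r$ splits into $O(\log r)$ contiguous blocks (``bushes''), one for each non-spine child of each spine vertex $S_i$, $i<r$, whose descendant subtree reaches level $r$, and such a subtree reaching level $r$ is, by a spinal decomposition of Galton--Watson trees conditioned on their height, a rescaled copy of the same type of object at scale $\asymp r-i$. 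One hops from one bush to the next along a single horizontal edge, and crosses a bush of scale $h$ not by traversing its whole level-$r$ arc but \emph{recursively}: going up into the bush, crossing it at a cheaper scale, and coming back down. The scales of the successively crossed subtrees (and, inside each bush, of the sub-subtrees one must cross in turn) shrink by a random multiplicative factor at each step, so the total cost is of the form $\sum_k(\text{cost at scale }h_k)$ over a geometrically decaying sequence $h_k$; feeding in the induction hypothesis ``cost at scale $h$ is at most $g(h)$ with $g(h)/h\to 0$'' and summing over scales by dominated convergence gives $\mathsf{Girth}_r(\Cinfty)\le g(r)=o(r)$. I expect this recursion to be the main obstacle, for three reasons: one must organise the crossing so that the constant per-scale overheads do not accumulate and destroy the gain; one must handle Galton--Watson trees conditioned on their height rather than unconditioned ones, which requires the exact spinal decompositions of conditioned trees together with Kolmogorov's estimate $\P(\text{height}\ge h)\asymp 1/h$ and Yaglom's theorem for the width at height $h$; and one must control, by concentration, the random number of bushes and the random decay factors so that the bound holds with probability tending to $1$ and not merely in expectation.

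\smallskip\noindent\textbf{The lower bound in (i).} Fix $\eps>0$; the goal is to exhibit two vertices of $\partial[T_\infty]_r$ at distance at least $r^{1-\eps}$ in $\Cinfty$, with a probability summable enough for Borel--Cantelli. The candidate pair is $S_r$ together with a vertex $z$ lying ``deep'' in the level-$r$ arc of the largest bush at level $r$; using that a subtree hanging off $S_i$ reaches level $r$ with probability $\asymp 1/(r-i)$ and summing over $i$, one checks that with probability $1-r^{-\Theta(1)}$ this largest bush comes from a subtree of scale at least $r^{1-\eps}$. The reason $z$ is then far from $S_r$ is planarity: in $\Cinfty$ restricted to $[T_\infty]_r$ the level-$r$ cycle bounds the outer face, so any path from $S_r$ to $z$ must either climb out of the subtree carrying $z$---which costs at least its scale---or, staying at comparable levels, must cross horizontally every vertex planarly separating that subtree from $S_r$; iterating this dichotomy along a nested sequence of sub-bushes around $z$ and choosing the parameters accordingly forces the length to be at least $r^{1-\eps}$ (up to replacing $\eps$ by a slightly larger quantity, which is immaterial for the $\liminf$). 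The obstacles here are the planar-separation bookkeeping (pinning down the correct separating sets and showing a geodesic must meet all of them through the nesting) and extracting a genuinely summable failure probability, which is ultimately why one only reaches $r^{1-\eps}$ and not $r$.

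\smallskip\noindent\textbf{Assembling (i).} Part~(ii) gives $\mathsf{Girth}_r(\Cinfty)=o(r)$ in probability; combined with the deterministic bound $\mathsf{Girth}_r(\Cinfty)\le 2r$, the upper bound in (i) holds with no further probabilistic input, and the lower bound in (i) follows from the tail estimate above by Borel--Cantelli, taken along a subsequence $r=2^k$ (along which the $r^{-\Theta(1)}$ failure probabilities are summable for every small $\eps$) and then extended to all $r$ by comparing $\mathsf{Girth}_r(\Cinfty)$ with $\mathsf{Girth}_{2^k}(\Cinfty)$ and $\mathsf{Girth}_{2^{k+1}}(\Cinfty)$ via the ancestral-projection inequalities $\mathsf{Girth}_{r'}(\Cinfty)\ge\mathsf{Girth}_r(\Cinfty)-2(r-r')$ for $r'\le r$.
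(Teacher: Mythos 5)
Your scaffolding is sound: the deterministic bound $\mathsf{Girth}_r(\Cinfty)\le 2r$ gives the upper half of (i), the spinal decomposition, Kolmogorov's estimate, Yaglom's theorem, and the Borel--Cantelli-along-a-subsequence step to pass from tail estimates to an a.s.\ statement are all the right ingredients. But both of the substantive steps as you describe them have gaps that the paper's arguments are designed to circumvent. For (ii), your recursion does not close. If a bush of scale $h$ decomposes into sub-bushes of scales $h_1,h_2,\dots$ with $\sum_k h_k\lesssim h$ and you bound the crossing cost by $g(h)\le\sum_k g(h_k)+O(\log h)$, then feeding in $g(h)=ch$ on the right returns $g(h)\lesssim ch+O(\log h)$, which holds for every $c>0$; the recursion is consistent with any linear growth rate, and invoking the hypothesis ``$g(h)/h\to 0$'' as an induction hypothesis is exactly what you are trying to establish, so the argument is circular. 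Your auxiliary idea of going ``up into the bush, crossing at a cheaper scale, and coming back down'' also fails to produce a gain: a vertical round trip of depth $\delta h$ costs $2\delta h$, while by Yaglom the horizontal cross-section at the intermediate level is still of size $\Theta(h)$, so nothing is saved. The paper sidesteps this entirely by working in the i.i.d.\ quarter-plane model, showing the left-right distances $(L_{n,m})$ form a stationary subadditive ergodic array, invoking Kingman's theorem to get $n^{-1}L_{1,n}\to c$ a.s.\ for some deterministic $c\in[0,1]$, and then proving $c=0$ by a separate ``shortcut'' observation: routing over the top of the first height-$r$ tree and down to the second gives $L_{1,\xi_r^{(2)}}\le \xi_r^{(1)}+2r$, and the exponential fluctuations of $\xi_r^{(1)},\xi_r^{(2)}$ put $\xi_r^{(1)}+2r\le\eps\,\xi_r^{(2)}$ on an event of positive probability for every $\eps>0$, whence $c\le\eps$. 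The existence of the limit supplied by Kingman is precisely what your self-similar recursion cannot produce on its own.

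For the lower bound in (i), your ``climb out or cross the separating set'' dichotomy is the correct intuition and matches the spirit of the paper's renormalisation, but you give no quantitative handle on the crossing alternative: planar separating sets can have tiny diameter, so ``must cross horizontally every vertex planarly separating that subtree from $S_r$'' has no lower bound by itself. The paper supplies one by bootstrapping the median width $f(r)$ of a block in the quarter-plane model via the inequality $f(r)\ge c\min\{m,\,(r/m)\,f(2m)\}$: a geodesic across a height-$r$ block either leaves a height-$2m$ strip (cost $\gtrsim m$) or must cross $\approx r/m$ i.i.d.\ sub-blocks of height $2m$, contributing $\gtrsim(r/m)f(2m)$ after a Chernoff concentration step. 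Iterating this with $\beta=1$ gives $f(r)\ge r\exp(-C\sqrt{\log r})$, which is then transferred to $\Cinfty$ through the spine decomposition, exactly as you envision. Your ``nested sequence of sub-bushes around $z$'' gives only one dichotomy per scale, so the path may escape at the bottom of the nesting at unit cost; the crucial feature you are missing is the sum over many independent sub-blocks at a single scale, which is what lets the widths concentrate and the recursion bootstrap.
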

% The above result shows that the width at height $r$ is small compared to $r$ although larger than $r^{1- \varepsilon}$ for any $ \varepsilon>0$. 

\hspace{0.4em}

A corollary of item $(ii)$ of Theorem \ref{thm:geometry} is that every geodesic between any two points at height $r$ in $\Cinfty$ stays within a strip of vertices at height $r\pm o(r)$ with high probability. This in turn implies that the scaling limit of $r^{-1} \cdot \Cinfty$  (in the local Gromov--Hausdorff sense)  is just a single semi-infinite line $( \mathbb{R}_{+}, |\cdot|)$. In other words, the metric in the horizontal (space) direction is collapsed relative to the metric in the vertical (time) direction, leading to a degenerate scaling limit.

The proof of item $(i)$ is based on a block-renormalisation argument and also yields the quantitative result that $\mathsf{Girth}_{r}( \Cinfty ) \geq r e^{-O(\sqrt{\log r})}$ as $r\to\infty$ almost surely (Proposition \ref{prop:quantgeometry}). On the other hand, item $(ii)$ uses the subadditive ergodic theorem and is not quantitative.
% item $(i)$ of Theorem \ref{thm:geometry}.

% (The proof of item $(ii)$ of Theorem \ref{thm:geometry} uses the subadditive ergodic theorem and is not quantitative.) 
% We do not expect this bound to be optimal. 

	 Once the geometry of $ \Cinfty$ is fairly well understood, we can apply this geometric understanding to study its spectral properties. We first show that $ \Cinfty$ is almost surely recurrent (Proposition \ref{prop:recurrence}) generalizing the result of \cite{DJW10}. Next, we apply Theorem \ref{thm:geometry} to prove the following results, the first of which completes the work of \cite{DJW10}. Given a connected graph $G$ and a vertex $x$, we denote by $\mathbf{P}_{G,x}$ the law of the simple random walk $(X_{n})_{n\geq 0}$ started at $x$ and denote by $P_G^{t}(x,x)$ the $t$-step return probability to $x$. The \textbf{spectral dimension} $d_{s}$ of a connected graph $G$ is defined to be 
% the unique exponent such that $ P^{2n}(x,x) = n^{-d_{s}/2+o(1)}$ for every vertex $x$ of $G$,
\[
d_s(G)= \lim_{n\to\infty}-\frac{2\log P_G^{2n}(x,x)}{\log n}
\]
  should this limit exist (in which case it does not depend on $x$). We also define the \textbf{typical displacement exponent} $\nu=\nu(G)$ of the connected graph $G$ by
\[
\lim_{n\to\infty}\mathbf{P}_{G,x}\bigl(n^{\nu-\eps} \leq  \mathrm{d}_\mathrm{gr}^G(x,X_n)\leq n^{\nu+\eps}\bigr) =1 \text{ for every $\eps>0$}
\]
should such an exponent exist (in which case it is clearly unique and does not depend on $x$). We say that $G$ is \textbf{diffusive} for simple random walk if the typical displacement exponent $\nu(G)$ exists and equals $1/2$. 

\begin{theorem}[Spectral dimension and diffusivity of generic causal maps] \label{thm:spectral} Let $\mu$ be critical with finite non-zero variance. Then \[d_s(\Cinfty)=2 \quad \text{ and } \quad \nu(\Cinfty)=1/2\] almost surely. In particular, both exponents exist almost surely. 
% $$  (i) \quad  \frac{\log P^{2n}(\rho,\rho)}{\log n} \xrightarrow[r\to\infty]{(a.s.)} -1 \quad \mbox{ and } \quad (ii) \quad \frac{ \log d_\mathrm{gr}^{\Cinfty}(\rho,X_n)}{ \log n} \xrightarrow[r\to\infty]{( \mathbb{P})} \frac{1}{2}.$$
% That is, $\Cinfty$ has spectral dimension $2$ and the random walk on $\Cinfty$ is diffusive almost surely.
% Then the $n$-step return probabilities for simple random walk on $\Cinfty$ satisfy $ P^{n}(\rho,\rho) = n^{-1+o(1)}$ almost surely as $n\to\infty$. 
% In other words, $\Cinfty$ has spectral dimension $2$ almost surely.
\end{theorem}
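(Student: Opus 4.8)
\section*{Proof proposal for Theorem~\ref{thm:spectral}}

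The plan is to derive both statements from two-sided control of the volume growth of balls and of the effective resistance across them, and then to invoke the now-standard machinery relating volume, resistance and heat-kernel decay on recurrent graphs (in the spirit of Barlow--Coulhon--Kumagai, together with its refinements to random environments by Kumagai--Misumi). Precisely, it suffices to prove that almost surely, as $r\to\infty$,
\[
\#B_{\Cinfty}(\rho,r)=r^{2+o(1)}\qquad\text{and}\qquad \Reff^{\Cinfty}\!\left(\rho\leftrightarrow B_{\Cinfty}(\rho,r)^{c}\right)=r^{o(1)},
\]
and that the same bounds hold from the spine vertices $\rho=v_{0},v_{1},v_{2},\dots$ of $T_{\infty}$. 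Feeding a volume-growth exponent $2$ and a resistance exponent $0$ into the general theory yields walk dimension $2$, hence $d_{s}=2\cdot 2/2=2$ and $\nu=1/2$.

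The volume estimate is immediate: every edge of $\Cinfty$ changes the height by at most $1$ (horizontal edges not at all), so the height is $1$-Lipschitz and therefore a lower bound for the distance to $\rho$, while the ancestral path in $T_{\infty}$ realises this distance. Hence $\mathrm{d}_{\mathrm{gr}}^{\Cinfty}(\rho,v)=\mathrm{height}(v)$ for all $v$, so $B_{\Cinfty}(\rho,r)=[T_{\infty}]_{r}$ exactly and $\#B_{\Cinfty}(\rho,r)=\sum_{k=0}^{r}\#\partial[T_{\infty}]_{k}$. The well-known estimate $\#\partial[T_{\infty}]_{k}\approx k$, with standard concentration, upgrades via Borel--Cantelli along $r=2^{j}$ and the monotonicity of $r\mapsto\#B_{\Cinfty}(\rho,r)$ to $\#B_{\Cinfty}(\rho,r)=r^{2+o(1)}$ almost surely; the same holds from each $v_{k}$ using the stationarity of the causal environment along the spine of $T_{\infty}$.

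The resistance estimate is the crux. Its lower bound is soft: the edge-sets between consecutive levels are a nested family of cutsets separating $\rho$ from $B_{\Cinfty}(\rho,r)^{c}$ of sizes $\#\partial[T_{\infty}]_{k}\approx k$, so Nash--Williams gives $\Reff^{\Cinfty}(\rho\leftrightarrow B_{\Cinfty}(\rho,r)^{c})\gtrsim\sum_{k\le r}k^{-1}\asymp\log r\to\infty$, consistent with recurrence. The hard part is the matching upper bound $\Reff^{\Cinfty}(\rho\leftrightarrow B_{\Cinfty}(\rho,r)^{c})\le r^{o(1)}$: the horizontal edges must pull the resistance down from the trivial $O(r)$ of the spine path all the way to subpolynomial, and this is exactly where Theorem~\ref{thm:geometry}(i) and the block-renormalisation behind it enter. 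The plan is to show that each dyadic annulus $A_{j}:=[T_{\infty}]_{2^{j+1}}\setminus[T_{\infty}]_{2^{j}}$ has resistance at most $e^{O(\sqrt{j})}$ between its inner and outer boundary levels, so that, summing over $j\le\log_{2}r$, $\Reff^{\Cinfty}(\rho\leftrightarrow B_{\Cinfty}(\rho,r)^{c})\le e^{O(\sqrt{\log r})}=r^{o(1)}$. The bounded-resistance estimate for one annulus amounts to exhibiting $\approx 2^{j}$ vertex-disjoint ``braided'' paths across $A_{j}$ from level $2^{j}$ to level $2^{j+1}$, each of length $2^{j+O(\sqrt{j})}$: an individual subtree hanging at level $2^{j}$ typically dies well before level $2^{j+1}$, but a path may hop between subtrees through the horizontal edges as it climbs, and it is the block-renormalisation that controls both the cost of such hopping and the number of roughly parallel braided channels. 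This connectivity/expansion statement for the causal map is the genuinely new geometric input, and I expect the construction of these paths (equivalently, of an efficient unit flow) and the bookkeeping that keeps its energy subpolynomial --- rather than the $O(r)$ one gets from naive level-by-level redistribution --- to be the main obstacle. As before, concentration plus Borel--Cantelli along a geometric subsequence and the stationarity of the causal environment along the spine make this almost sure and available simultaneously from all the $v_{k}$.

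With the volume and resistance exponents in hand the standard estimates conclude. The exit time from a ball satisfies $\mathbf{E}_{x}[\tau_{B_{\Cinfty}(x,r)^{c}}]=r^{2+o(1)}$ (it is comparable to $\#B_{\Cinfty}(x,r)\cdot\Reff^{\Cinfty}(x\leftrightarrow B_{\Cinfty}(x,r)^{c})$), which with the volume bound gives the two-sided heat-kernel estimate $P^{2n}_{\Cinfty}(x,x)=n^{-1+o(1)}$ almost surely --- the upper bound by confining the walk, up to time $n$, to $B_{\Cinfty}(x,n^{1/2+\eps})$, the lower bound because by that time it has already explored and nearly equilibrated on the smaller ball $B_{\Cinfty}(x,n^{1/2-\eps})$ --- whence $d_{s}(\Cinfty)=2$. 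For the displacement, the exit-time bound with radius $n^{1/2+\eps}$ shows $\mathrm{d}_{\mathrm{gr}}^{\Cinfty}(x,X_{n})\le n^{1/2+\eps}$ with high probability (the walk has not yet left that ball by time $n$), while the heat-kernel upper bound --- which, being available simultaneously from all vertices of $B_{\Cinfty}(x,n)$, forces $\mathbf{P}_{x}(X_{n}=y)\le n^{-1+o(1)}\sqrt{\deg y/\deg x}$ and hence, using the bounded average degree of the planar map $\Cinfty$, $\mathbf{P}_{x}\!\left(X_{n}\in B_{\Cinfty}(x,n^{1/2-\eps})\right)\le n^{-1+o(1)}\cdot n^{1-2\eps+o(1)}\to 0$ --- gives $\mathrm{d}_{\mathrm{gr}}^{\Cinfty}(x,X_{n})\ge n^{1/2-\eps}$ with high probability. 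Together these yield $\nu(\Cinfty)=1/2$, and in particular both exponents exist almost surely.
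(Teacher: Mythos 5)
Your proposal identifies the right ingredients---volume growth exponent $2$, resistance growth exponent $0$, and the relation $d_s=2\mathfrak{g}/(\mathfrak{g}+\mathfrak{r})$, $d_s=2\nu\mathfrak{g}$---and correctly isolates the sub-polynomial resistance bound (Theorem~\ref{thm:resistance}) as the genuinely new input. But the paper deliberately does \emph{not} feed these exponents into the Barlow--Coulhon--Kumagai or Kumagai--Misumi machinery; it states explicitly that when $\mathfrak{r}=0$ and $\mathfrak{g}=2$ the deduction can be done directly, and then gives a short self-contained argument. Concretely: for the return probability upper bound it uses that $\deg(\rho)\Reff(\rho\leftrightarrow B_r^c)$ equals the expected number of visits to $\rho$ before exiting $B_r$, combined with monotonicity of $n\mapsto P^{2n}(\rho,\rho)$, giving $(n+1)P^{2n}(\rho,\rho)\le\deg(\rho)\Reff(\rho\leftrightarrow B_{2n}^c)$; for the displacement upper bound it uses Varopoulos--Carne plus the volume bound; and for both lower bounds it uses a single Cauchy--Schwarz inequality, $P^{2n}(\rho,\rho)\ge\deg(\rho)\,\mathbf{P}_{\Cinfty,\rho}(X_n\in B_r)^2/\sum_{x\in B_r}\deg(x)$, which, rearranged, gives the return probability lower bound (taking $r\sim\sqrt{n\log n}$) and the displacement lower bound (taking $r=n^{1/2-\eps}$) from the return probability upper bound and the volume bound alone.

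This matters for your displacement lower bound, which as written has a gap. You invoke an off-diagonal estimate $\mathbf{P}_{\rho}(X_n=y)\le n^{-1+o(1)}\sqrt{\deg y/\deg\rho}$ ``available simultaneously from all vertices'', but the resistance bound (hence the on-diagonal upper bound) is established only from the origin $\rho$; to get the off-diagonal bound in the way you suggest via $P^n(\rho,y)^2\le P^{2n}(\rho,\rho)P^{2n}(y,y)\deg y/\deg\rho$ you would need the return-probability upper bound at every $y\in B_n$ as well, which is not what is proved (and your appeal to stationarity along the spine does not supply it, since most vertices of $B_n$ are far from the spine). The paper's Cauchy--Schwarz step sidesteps this entirely. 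Similarly, your exit-time-based displacement upper bound needs concentration of the exit time, not merely a bound on its expectation; Markov's inequality bounds the wrong tail. The paper's use of Varopoulos--Carne is both simpler and avoids this issue. Finally, the resistance upper bound is only sketched and hedged (``I expect... to be the main obstacle''); if one is permitted to cite Theorem~\ref{thm:resistance} this is fine, but on its own the proposal does not establish it. In short: right skeleton and right key lemma, but the final deduction should follow the paper's elementary route---or else the uniform-in-space heat kernel estimates and exit-time concentration you tacitly use need to be proved.
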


Note that these exponents are \emph{not} the same as the underlying tree $T_\infty$, which has $d_s=4/3$ and $\nu=1/3$ \cite{BK06,DJW07,MR2407556,Kes86}.
% It is conjectured that \emph{uniform} infinite random planar maps have $d_s=2$ and $\nu=1/4$, see \cite{GGN12,BCsubdiffusive,1701.01598} for partial results and 
 % The complementary bounds will follow as a corollary to the resistance estimates that we establish for $\Cinfty$. 

% These resistance estimates are obtained from \cref{thm:geometry}  that are described in Section \ref{subsec:quant}. 

The central step in the proof of Theorem \ref{thm:spectral} is to prove that the exponent $ \mathfrak{r}$ governing  the growth of the resistance between $\rho$ and the boundary of the ball of radius $n$ in $ \Cinfty$, defined by $ \mathsf{R_{eff}}(\rho \leftrightarrow \partial [T_\infty]_n;\, \Cinfty) = n^{ \mathfrak{r}+o(1)}$,  is $ \mathfrak{r}=0$.
In fact, we prove the following quantitative subpolynomial upper bound on the resistance growth.  This estimate is established using geometric controls on $ \Cinfty$ and the method of random paths \cite[Chapter 2.5]{LP10}. It had previously been open to prove any sublinear upper bound on the resistance.

% We do not have a conjecture for the true asymptotics of the width.

\begin{theorem}[Resistance bound for generic causal maps ]
\label{thm:resistance}
Suppose $\mu$ is critical and has finite non-zero variance. Then there exists a constant $C$ such that almost surely for all $r$ sufficiently large we have 
 \[ \mathsf{{R}_{eff}}(\rho \leftrightarrow \partial [T_\infty]_r;\, \Cinfty) \leq e^{C \sqrt{\log r}}. \]

\end{theorem}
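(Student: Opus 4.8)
The plan is to bound the effective resistance from above using the method of random paths (Thomson/flow principle in its path form): it suffices to exhibit a unit flow from $\rho$ to $\partial[T_\infty]_r$ whose energy is at most $e^{C\sqrt{\log r}}$, and we will construct this flow as the law of a random path. The natural candidate path goes essentially straight up the tree from the root to level $r$, but a deterministic path has infinite energy in the horizontal direction is not the issue — rather, a single path has energy equal to its length $\asymp r$, so we must spread the flow over many nearly-vertical paths so that each edge carries only a small amount of current. The key point, supplied by the geometric input, is that Theorem \ref{thm:geometry}(i) (and its quantitative refinement Proposition \ref{prop:quantgeometry}) tells us that two vertices at the same level $k$ are connected by a horizontal arc of length at most $k\, e^{O(\sqrt{\log k})}$, so we can afford to make horizontal ``correction'' moves cheaply on a coarse scale.

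The construction I would carry out is a multiscale/hierarchical one. Work with dyadic scales $2^j$ for $0\le j\le \log_2 r$. At scale $j$, partition the path's journey from level $0$ to level $r$ into blocks of $2^j$ levels. Within the block starting at level $\ell$, the walker should: go up through the subtree to level $\ell+2^j$, then make a horizontal move at level $\ell+2^j$ to reach a designated ``good'' vertex (one with many descendants surviving to the next block, which exist with good probability because $\#\partial[T_\infty]_k\asymp k$ and by standard Galton--Watson estimates), and then recurse. The randomness is injected by, at each branch point, choosing among available children with probabilities proportional to their subtree sizes (a size-biased / harmonic-type splitting), so that the vertical edges at depth $k$ each carry current of order $1/\#\partial[T_\infty]_k \asymp 1/k$, giving a vertical energy contribution $\sum_{k\le r} k\cdot(1/k)^2 = O(\log r)$. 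The horizontal moves are the delicate part: at each of the $\asymp r/2^j$ blocks of scale $j$ we traverse a horizontal arc of length $\asymp 2^j e^{O(\sqrt{\log r})}$, and the flow must be arranged so that each such horizontal edge is used by a proportion of paths that is summably small. Choosing to split the horizontal correction across scales (so that only an $e^{-c\sqrt{\log r}}$ fraction of the total correction happens at each of the $\asymp\sqrt{\log r}$ relevant scales, or balancing the energy contributions scale by scale) yields a total horizontal energy of $e^{O(\sqrt{\log r})}$, which dominates and gives the claimed bound.

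Concretely, the steps in order are: (1) State the random-path/flow energy bound from \cite[Chapter 2.5]{LP10} in the form we need. (2) Record the probabilistic geometric facts we will use: $\#\partial[T_\infty]_k\asymp k$ with good concentration, existence at every level of ``hubs'' with a positive-fraction share of the descendants of the next block, and the horizontal-distance bound $\mathsf{Girth}_k(\Cinfty)\le k\,e^{O(\sqrt{\log k})}$ from Proposition \ref{prop:quantgeometry} — all holding simultaneously for all large $k$ almost surely, after a Borel--Cantelli argument. (3) Define the hierarchical random path and verify it is a path from $\rho$ to level $r$ (so the associated flow is a genuine unit flow). (4) Compute the energy: bound the vertical contribution by $O(\log r)$ via the size-biased splitting, and bound the horizontal contribution by $e^{O(\sqrt{\log r})}$ via the multiscale allocation. (5) Combine and promote the ``with high probability'' statement to ``almost surely for all large $r$'' by a union bound over $r$ ranging in dyadic blocks, using monotonicity of resistance in $r$ to interpolate.

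The main obstacle I anticipate is step (4), specifically controlling the horizontal energy: one must ensure that the horizontal arcs used at different blocks and different scales do not overlap too much, and that the current on each horizontal edge is genuinely small. This requires being careful about \emph{which} horizontal arc connects two hubs (the arc realizing $\mathsf{Girth}$ is some specific path whose edge set we do not control a priori) and about possible reuse of the same horizontal edges by paths descending from different parts of the tree. The cleanest way around this is probably to not insist on using the shortest horizontal arc, but to use the full cyclic level as a ``highway'': route the horizontal correction as a sub-flow spread over the entire cycle $\partial[T_\infty]_k$ (whose length is $\asymp k$), so that each horizontal edge carries current of order (fraction of paths needing correction at level $k$)$/k$; the length bound from Theorem \ref{thm:geometry}(i) then enters only to guarantee the cycle is short enough that this spreading is affordable. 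Making this decomposition into a vertical sub-flow plus a horizontal sub-flow rigorous, and checking the two have comparable small energy, is where the real work lies.
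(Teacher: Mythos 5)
Your proposal correctly identifies the random-paths framework, but the geometric ingredient you reach for is the wrong one, and the energy computation does not close. You cite Proposition~\ref{prop:quantgeometry} as an upper bound $\mathsf{Girth}_k \leq k\,e^{O(\sqrt{\log k})}$, but that proposition gives a \emph{lower} bound $\mathsf{Girth}_k \geq k\,e^{-O(\sqrt{\log k})}$; the only upper bound available is the trivial $\mathsf{Girth}_k \leq 2k$, which is of no help for this argument. More seriously, your vertical-energy computation is incorrect: splitting the flow at each branch point proportionally to surviving subtree size does \emph{not} spread it over $\asymp k$ vertices at level $k$, because most vertices at level $k$ root subtrees that die before level $r$. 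The size-biased tree flow concentrates on the spine, and the vertical energy is $\Theta(r)$, not $O(\log r)$ (this is exactly why $T_\infty$ itself has resistance exponent $\mathfrak{r}=1$). And your proposed fix of spreading horizontal corrections over the whole level cycle is uncontrolled as stated: if the incoming flow is concentrated (as it is), re-uniformizing it over a cycle of length $\asymp k$ can force $\Theta(1)$ current across a single horizontal edge, contributing $\Theta(k)$ energy at level $k$ and destroying the bound.

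The ingredient you are missing is the \textbf{dual width} lower bound, Theorem~\ref{thm:maindual}: a block of height $k$ has, with probability at least $1/2$, at least $k\,e^{-O(\sqrt{\log k})}$ \emph{edge-disjoint} bottom-to-top crossings. This is the tool that lets one actually spread the flow without reasoning about redistribution. The paper's proof picks heights $h_n$ so that consecutive blocks $[\mathcal{C}_{h_n}]_{k_n}$ (with $k_n\asymp h_n(\log h_n)^{-C}$) overlap, samples a uniformly random edge-disjoint crossing $\Gamma^{(n)}$ from each block (using Lemma~\ref{lem:lil} and Theorem~\ref{thm:maindual} to guarantee, almost surely, $k_n e^{-O(\sqrt{\log k_n})}$ of them), and stitches $\Gamma^{(m)}$ to $\Gamma^{(m+1)}$ with a \emph{single} horizontal arc at a uniformly random level $s_{m+1}\in[h_{m+1},h_{m+2}]$. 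Any fixed edge at height $\ell$ is then used with probability at most $\ell^{-1}e^{O(\sqrt{\log\ell})}$, and summing against the $O(\ell\log^C\ell)$ edges at each level gives energy $e^{O(\sqrt{\log r})}$. Without the dual-width estimate (or an equivalent count of edge-disjoint nearly-vertical paths), the per-edge usage probability in your construction cannot be bounded, and the proposal cannot be completed as written.
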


Theorem \ref{thm:spectral} can easily be deduced from Theorem \ref{thm:resistance} by abstract considerations. Indeed, by classical properties of Galton--Watson trees, the volume growth exponent $ \mathfrak{g}$, defined by  $ \#  \mathrm{Ball}(\rho,n) = n^{ \mathfrak{g}+o(1)}$, is easily seen to be equal to $2$. For recurrent graphs, the spectral dimension and typical displacement exponent can typically be computed from the volume growth and resistance growth exponents via the formulas
\[ d_{s} =  \frac{2 \mathfrak{g}}{ \mathfrak{g}+ \mathfrak{r}},\qquad \text{ and } \qquad d_s =2\nu  \mathfrak{g},\]
which yield $d_s=2$ and $\nu=1/\mathfrak{g}$ whenever $\mathfrak{r}=0$. 
Although this relationship between exponents holds rather generally (see \cite{BJKS08,K10,KM08}), things become substantially simpler in our case of $\mathfrak{r}=0,$ $\mathfrak{g}=2$ and we include a direct derivation. 
Indeed, in this case it suffices to use the inequalities
\[ d_s \geq 2-2\mathfrak{r}, \quad d_s \leq 2\nu \mathfrak{g}, \quad \text{ and } \quad \mathfrak{g}<\infty \Rightarrow \nu \leq 1/2,\]
which are more easily proven and require weaker controls on the graph. 
% We include a short and direct derivation of Theorem \ref{thm:spectral} from Theorem \ref{thm:resistance}. 
Let us note in particular that the \emph{upper bounds} on $d_s$ and $\nu$ are easy consequences of the Varopoulos-Carne bound %\cite{Carne1985,Varopoulos1985b} 
% together with the fact that causal maps have quadratic growth (i.e., $\mathfrak{g}=2$),
 and do not require the full machinery of this paper.

\subsection{The $\alpha$-stable case.} Besides the finite variance case, we also study the case in which the offspring distribution $\mu$ is critical  and is ``$\alpha$-stable'' in the sense that it satisfies the asymptotic\footnote{Here $f(k) \sim g(k)$ means that $f(k)/g(k)\to1$ as $k\to+\infty$.}
  \begin{eqnarray} \mu([k, \infty)) \quad \underset{k \to \infty}{\sim}\quad  c\ k^{-\alpha}, \quad \mbox{ for some }  c >0 \mbox{ and }\alpha  \in (1,2).  \label{eq:defstable}\end{eqnarray}
In particular the law $\mu$ is in the strict domain of attraction of the totally asymmetric $\alpha$-stable distribution (we restrict here to polynomially decaying tails to avoid technical complications involving slowing varying functions). The study of such causal maps is motivated by their connection to uniform random planar triangulations. Indeed, Krikun's skeleton decomposition \cite{Kri04} identifies an object related to the stable causal map  with exponent $\alpha = \frac{3}{2}$ inside the UIPT, see Section \ref{sec:carpet}.

We still denote by $T_{\infty}$ the $\mu$-Galton--Watson tree conditioned to be infinite (the dependence in $\mu$, and hence in $\alpha$, is implicit), and denote by $\Cinfty$ the associated causal map. The geometry of $\mu$-Galton--Watson trees with critical ``$\alpha$-stable'' offspring distribution is known to be drastically different from the finite variance case. In particular, the size of the $n$th generation of $T_{\infty}$ is of order $n^{ \frac{1}{\alpha-1}}$ rather than $n$, and the the scaling limit is given by the (infinite) stable tree of Duquesne, Le Gall and Le Jan \cite{DLG02}, rather than the Brownian tree of Aldous \cite{Ald91a}.

We prove that there is a further pronounced difference occuring when one investigates the associated causal maps. Namely, while the girth at height $r$ was strictly sublinear in the finite variance case, it is linear in the $\alpha$-stable case. In particular, we have the following analog of Theorem \ref{thm:geometry}.

\begin{theorem}[Geometry of stable causal maps] \label{thm:geometrystable} If $\mu$ is critical and satisfies \eqref{eq:defstable} then we have
$$ (i) \quad   \frac{\log \mathsf{Girth}_{r}( \Cinfty)}{\log r} \xrightarrow[r\to\infty]{(a.s.)} 1 \quad \mbox{ and } \quad (ii) \quad   \lim_{ \varepsilon \to 0} \inf_{ r \geq 1} \mathbb{P}\left(\mathsf{Girth}_{r}(\Cinfty) \geq \varepsilon r\right)  = 1.$$
\end{theorem}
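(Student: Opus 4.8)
The two items call for rather different arguments: item $(i)$ follows the same scheme as in the finite-variance case, while item $(ii)$ genuinely exploits the macroscopic inhomogeneity peculiar to $\alpha$-stable trees.

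For item $(i)$, the upper bound $\limsup_r \log\mathsf{Girth}_r(\Cinfty)/\log r\le 1$ is just the triangle-inequality bound $\mathsf{Girth}_r(\Cinfty)\le 2r$ recorded above. For the matching lower bound I would rerun the block-renormalisation scheme behind Proposition \ref{prop:quantgeometry}: that argument builds inside $[T_\infty]_r$ a hierarchical family of events at scales $r,r/2,\dots$ whose joint occurrence forces any path between two prescribed vertices of $\partial[T_\infty]_r$ to have length $r^{1-o(1)}$, and the only structural inputs it uses are that $\mu$ is critical (so $T_\infty$ is well-defined and survives) together with crude a priori bounds on survival and on the width of a $\mu$-Galton--Watson tree. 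Since the $\alpha$-stable tree is, if anything, wider than its finite-variance counterpart (layer $n$ has size of order $n^{1/(\alpha-1)}\gg n$), all the estimates go through with room to spare and yield $\mathsf{Girth}_r(\Cinfty)\ge r^{1-o(1)}$ almost surely; with the upper bound this gives $(i)$. (Alternatively, once the quantitative form of $(ii)$ below is in hand, with its explicit decay of $\mathbb{P}(\mathsf{Girth}_r<\varepsilon r)$ in $\varepsilon$, the lower bound in $(i)$ follows by taking $\varepsilon=\varepsilon_r\to 0$ slowly and applying Borel--Cantelli.)

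Item $(ii)$ is where $\alpha$-stability is essential. The mechanism is that, unlike in the finite-variance case where layer widths grow smoothly and linearly, $T_\infty$ typically carries at every scale vertices of anomalously large degree and, above all, that the width of (sub)trees of $T_\infty$ at level $m$ is of order $m^{1/(\alpha-1)}$, which for $m\asymp r$ is $\gg r$. The first step is a soft reduction: if $v\neq\rho$ is an ancestor of some $x\in\partial[T_\infty]_r$ while some $y\in\partial[T_\infty]_r$ is not a descendant of $v$, then, writing $D(v)$ for the set of vertices whose ancestral line passes through $v$, the function $w\mapsto \mathrm{d}_{\mathrm{gr}}^{\Cinfty}(w,\,\Cinfty\setminus D(v))$ is $1$-Lipschitz on $\Cinfty$ and vanishes at $y$, so that $\mathsf{Girth}_r(\Cinfty)\ge \mathrm{d}_{\mathrm{gr}}^{\Cinfty}(x,\,\Cinfty\setminus D(v))$; the right-hand side is the number of steps needed to \emph{escape} the region spanned by the descendants of $v$. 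The second step is to choose the pair $(v,x)$ well: $v$ will be taken at a level $h\asymp(1-c\varepsilon)r$ having at least $\varepsilon r$ descendants at level $r$, with a descendant-region that does not ``pinch'' over its top layers, and $x$ will be one of those level-$r$ descendants lying ``horizontally deep'' inside the arc $A_r(v)\subset\mathcal C_r$ of descendants of $v$ at level $r$, far from both of its endpoints. The third step is to argue that such a configuration exists with probability tending to $1$ as $\varepsilon\to0$, uniformly in large $r$: the key input is exactly the width asymptotics above — e.g. the subtree of $T_\infty$ rooted at the spine vertex at level $(1-c\varepsilon)r$ has at level $r$ a number of vertices of order $(c\varepsilon r)^{1/(\alpha-1)}$, which exceeds $\varepsilon r$ with probability close to $1$ once $\varepsilon$ is small and $r$ large — together with the fact that among the subtrees branching off near the top of $D(v)$ only $O(1)$ are comparably tall, which is what prevents pinching and what one needs to place $x$ deep inside the arc.

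It then remains to check that escaping $D(v)$ costs at least $\varepsilon' r$. A path from $x$ leaving $D(v)$ must do so either through the tree-edge from $v$ to its parent, which forces a descent from level $r$ down to level $\asymp(1-c\varepsilon)r$ and hence costs at least $c\varepsilon r$; or through one of the two lateral boundary edges of some arc $A_\ell(v)$ with $\ell\le r$, that is, the path must first reach the leftmost or rightmost descendant of $v$ at some level $\ell$. In the second case, because $x$ was chosen horizontally deep and the arcs $A_\bullet(v)$ do not pinch near the top, reaching such an extremal vertex requires either of order $|A_\ell(v)|\asymp r^{1/(\alpha-1)}\gg r$ horizontal steps, or of order $r$ vertical steps (inside a surviving subtree the cheapest sideways motion is to descend toward its root and climb back up). Either way the cost is at least $\varepsilon' r$, and combining the two cases gives $\mathsf{Girth}_r(\Cinfty)\ge \mathrm{d}_{\mathrm{gr}}^{\Cinfty}(x,\,\Cinfty\setminus D(v))\ge\varepsilon' r$.

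\textbf{Main obstacle.} The delicate point is the last step made rigorous: carrying out the escape-route analysis in the planar map requires tracking how the boundary arcs $A_\ell(v)$ evolve with $\ell$ and, crucially, excluding short ``diagonal'' paths that descend a little, slip sideways past a handful of short sub-subtrees, and climb back up — which in turn forces a quantitative handle on the heights of the many subtrees branching off near the top of $D(v)$, and a precise version of the claim that $x$ may be taken horizontally deep inside $A_r(v)$. The latter is self-similar in nature, since $D(v)$ itself contains smaller high-degree vertices, so the cleanest route is probably to isolate an a priori ``no thin neck'' estimate for $\mathsf{Causal}$ of an $\alpha$-stable subtree conditioned to be tall, rather than to attempt a full induction on scales.
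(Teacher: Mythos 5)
For item $(i)$ your plan is essentially the paper's: the upper bound is the trivial $\mathsf{Girth}_r \le 2r$, and the lower bound is obtained by pushing the block-renormalisation estimate (Theorem~\ref{thm:main}) through the spine decomposition exactly as in the proof of Proposition~\ref{prop:quantgeometry}; the paper in fact proves Theorems~\ref{thm:geometry}$(i)$ and~\ref{thm:geometrystable}$(i)$ in a single argument, distinguishing only the value of $\beta$.

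For item $(ii)$, however, you have taken a genuinely different route, and it has a gap you have not closed. The paper's proof of $(ii)$ reuses the \emph{same} block machinery that powers $(i)$: by \eqref{eq:yaglombis} and \eqref{eq:kolmogorov}, the number of trees in the forest $\mathcal{F}_n$ reaching height $\eta n$ has expectation of order $\eta^{-\beta}\to\infty$ as $\eta\to0$, so one may extract $k_0$ independent blocks of height $\eta n$ and use Theorem~\ref{thm:main} (which gives $f(\eta n) \ge c\eta n$ in the stable case) to guarantee that at least two of them have width $\ge c\eta n$ with probability close to $1$; the girth lower bound at levels near $n(1+\eta/2)$ then follows by the same ``cross-a-block-or-leave-the-layer'' dichotomy used for $(i)$. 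Your proposed mechanism — picking an ancestor $v$ at height $\asymp(1-c\eps)r$ and lower-bounding the escape distance from a ``horizontally deep'' descendant $x$ at level $r$ — bypasses the blocks entirely, but the crucial notion of ``horizontal depth'' is exactly a width-type quantity, and you have not given an independent way to control it. In particular, your third step infers that $x$ can be taken deep from the fact that $\#A_r(v)\asymp(c\eps r)^\beta\gg \eps r$, but cardinality of the arc does not bound its diameter in $\Cinfty$ from below: a block with $\gg r$ vertices on its top boundary can perfectly well have small width (this is precisely what Theorem~\ref{thm:main} rules out, and is nontrivial). Likewise, the escape-cost estimate in step four — that reaching a boundary vertex of some $A_\ell(v)$ costs either $\asymp r^\beta$ horizontal steps or $\asymp r$ vertical steps — is not justified: the arcs $A_\ell(v)$ shrink as $\ell$ decreases toward the level of $v$, and a path may descend part-way, slip sideways past a handful of short sub-subtrees, and climb back up. Excluding such diagonal shortcuts is, once again, a width estimate in disguise. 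You explicitly flag this as the ``main obstacle'' and note the self-similarity of the difficulty, but the cleanest resolution is just the renormalisation scheme already in Section~\ref{sec:halfplane}; as written, your step four is not a proof but a restatement of the thing to be proved.

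A secondary, more minor point: your claim that $(c\eps r)^\beta > \eps r$ ``with probability close to $1$ once $\eps$ is small and $r$ large'' holds for fixed $\eps$ and $r$ large, but the condition $c^\beta(\eps r)^{\beta-1}>1$ fails for small $r$; since the statement is an $\inf_{r\ge1}$, the small-$r$ regime must be treated separately (trivially so, since $\mathsf{Girth}_r>0$ and $\eps r\to0$). This is a cosmetic issue, but it signals that your argument as phrased is not actually uniform in $r$ in the way the theorem requires.
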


Similar to Theorem \ref{thm:geometry}, the proof of this theorem uses a block-renormalisation argument. 
 We  conjecture that in fact $ r^{-1}\mathsf{Girth}_{r}(\Cinfty)$ converges in distribution and more generally that $r^{-1} \cdot  \Cinfty$ converges in the local Gromov--Hausdorff sense. 
 These questions will be addressed in a forthcoming work of the first author. This theorem (and its proof) in fact have direct consequences in the theory of uniform random planar triangulations, using Krikun's skeleton decomposition; see Section \ref{sec:comments} for further details. 

 The adaptation of the techniques used to prove Theorem \ref{thm:spectral} here yields that 
 % the volume growth exponent is $ \mathfrak{g} = \frac{\alpha}{\alpha-1}$, whereas 
 the resistance exponent satisfies 
\begin{equation}
\label{eq:stableresistancebound}
  \mathfrak{r} \leq  \frac{2- \alpha}{\alpha-1},
\end{equation}
while the volume growth exponent is known to be $ \mathfrak{g} = \frac{\alpha}{\alpha-1}$ \cite{CK08}.
   Notice that this bound is only useful in the range $\alpha \in (3/2,2)$ since we always have $ \mathfrak{r} \leq 1$. This witnesses that our understanding of the spectral properties of $ \Cinfty$ in the $\alpha$-stable case is much less advanced than in the finite variance case. 
The bound \eqref{eq:stableresistancebound} becomes much more interesting in the case of the $\alpha$-stable \emph{causal carpet}, which we expect to really have polynomial resistance growth; see Section \ref{sec:carpet} for further discussion. 
We remark that the spectral properties of the tree $T_\infty$ have been studied by Croydon and Kumagai \cite{CK08}, who prove in particular that $T_\infty$ has spectral dimension $2\alpha/(2\alpha-1)$ almost surely.

We are embarrassed to leave the following question open:
\begin{open} Suppose that $\mu$ is critical and satisfies \eqref{eq:defstable}. Is $ \Cinfty$ a.s.~transient?
\end{open}

\paragraph{Organization.} The paper is organized as follows. In Section \ref{sec:halfplane} we present the renormalisation technique that enables to bound from below the girth of causal graphs in a ``quarter-plane'' model carrying more independence than $\Cinfty$. This technique is rather general and we hope the presentation will make it easy to adapt to other settings. We also present the subadditive argument (Section \ref{sec:subadditive}) which gives the sublinear girth in the case of finite variance offspring distribution.
Section \ref{sec:width} is then devoted to the careful proof of Theorem \ref{thm:geometry} and \ref{thm:geometrystable}, which is done by dragging the quarter-plane estimates through to the original model $\Cinfty$. In Section \ref{sec:spectral}, we use the geometric knowledge gathered thusfar to prove Theorem \ref{thm:resistance} and deduce Theorem \ref{thm:spectral}. Section \ref{sec:comments} is devoted to extensions and comments.
\medskip

\paragraph{Acknowledgments:} NC acknowledges support from the Institut Universitaire de France, ANR Graal (ANR-14-CE25-0014), ANR Liouville (ANR-15-CE40-0013) and ERC GeoBrown. TH and AN were supported by ISF grant 1207/15 and ERC grant 676970 RandGeom. TH was also supported by a Microsoft Research PhD Fellowship and he thanks Tel Aviv University and Universit\'e Paris-Sud Orsay for their hospitality during visits in which this work was carried out. TH also thanks Jian Ding for bringing the problem of resistance growth in the CDT to his attention. Lastly, we warmly thank the anonymous referees for many valuable comments on the manuscript.
%the underlying tree is a version of a random Galton--Watson tree whose offspring distribution $\mu$ is critical (of mean $1$) and satisfies the asymptotic
%$$ \mu([k, \infty)) \quad \underset{k \to \infty}{\sim} c\ k^{-\alpha}, \quad \mbox{ for } \alpha  \in (1,2) \cup (2, \infty).$$
%In particular when $\alpha >2$, this implies that $\mu$ has a finite variance and when $\alpha \in (1,2)$, the law $\mu$ is in the strict domain of attraction of the totally asymmetric $\alpha$-stable distribution. The dependence will be implicit in this work and we shall
%
%the underlying tree is a version of a random Galton--Watson tree whose offspring distribution $\mu$ is critical (of mean $1$) and satisfies the asymptotic
%$$ \mu([k, \infty)) \quad \underset{k \to \infty}{\sim} c\ k^{-\alpha}, \quad \mbox{ for } \alpha  \in (1,2) \cup (2, \infty).$$
%In particular when $\alpha >2$, this implies that $\mu$ has a finite variance and when $\alpha \in (1,2)$, the law $\mu$ is in the strict domain of attraction of the totally asymmetric $\alpha$-stable distribution. The dependence will be implicit in this work and we shall

\vspace{0.3cm}

\begin{center} \hrulefill \fbox{\begin{minipage}{12cm}\textit{For the rest of the paper,  $\mu$ will be a fixed critical offspring distribution. Furthermore, we will always assume  either that $\mu$ has a finite, positive variance, or else that \eqref{eq:defstable} holds for some $\alpha \in (1,2)$. We refer to these two cases as the \emph{finite variance} and \emph{$\alpha$-stable} cases respectively. To unify notation, we let $\beta=1$ in the finite variance case and  $\beta = \frac{1}{\alpha-1}>1$ in the $\alpha$-stable case.} \end{minipage}}\hrulefill  \end{center}

\vspace{0.1cm}

\section{Estimates on the quarter-plane model}
\label{sec:halfplane}
The goal of this section is to study the girth of random causal graphs. For this, we first define a ``quarter-plane'' model carrying more symmetries and independence properties than $T_{\infty}$. We then define the notion of a \emph{block} and establish the key renormalisation lemma than enables us to lower bound the width of a block (Proposition \ref{prop:renorm}). The outcome of this renormalisation procedure is slightly different depending on whether $\mu$ has finite variance or is ``$\alpha$-stable''. These estimates will later be transferred to the actual model $ \Cinfty$ in Section \ref{sec:width}. In Section \ref{sec:subadditive} we present the subadditive argument for the quarter-plane model (Proposition \ref{prop:subadditive}) which will enable us to prove that the width is sublinear in the finite variance case. \bigskip

Before presenting the quarter-plane model, let us start by recalling a few standard estimates on critical Galton--Watson trees. Recall that $\mu$ is always a \emph{critical} offspring distribution and recall the definition of $\beta$ above. The famous estimate of Kolmogorov and its extension to the stable case by Slack \cite{Slack68} states that
 \begin{eqnarray} \label{eq:kolmogorov} \mathbb{P}( \mathsf{Height}(T) \geq n) \sim c\, n^{-\beta}, \quad \mbox{for some }c>0, \quad \mbox{ as } n \to \infty.  \end{eqnarray}
Furthermore, conditionally on non-extinction at generation $n$, the total size of generation $n$ converges after rescaling by $n^{\beta}$ towards a non-zero random variable (Yaglom's limit and its extension by Slack \cite{Slack68}):
 \begin{eqnarray} \label{eq:yaglom}  \left(n^{-\beta}\# \partial [T]_{n} \in \cdot  \mid \mathsf{Height}(T) \geq n\right) \xrightarrow[n\to\infty]{(d)} \mathcal{X},  \end{eqnarray} where $ \mathcal{X}$ is an explicit positive random variable (but whose exact distribution will not be used in the sequel).
\subsection{The block-renormalisation scheme}\label{sec:blocks}

\paragraph{The quarter-plane model.} We consider a sequence $T_{1},T_{2}, \dots$ of independent and identically distributed $\mu$-Galton--Watson trees. We can then index the vertices of this forest by $\{1,2,\ldots\}\times \{0,1,\ldots\}$ in an obvious way as depicted in the Figure \ref{fig:block} below.
\vspace{0.5em}
\begin{figure}[!h]
 \begin{center}
 \includegraphics[width=14cm]{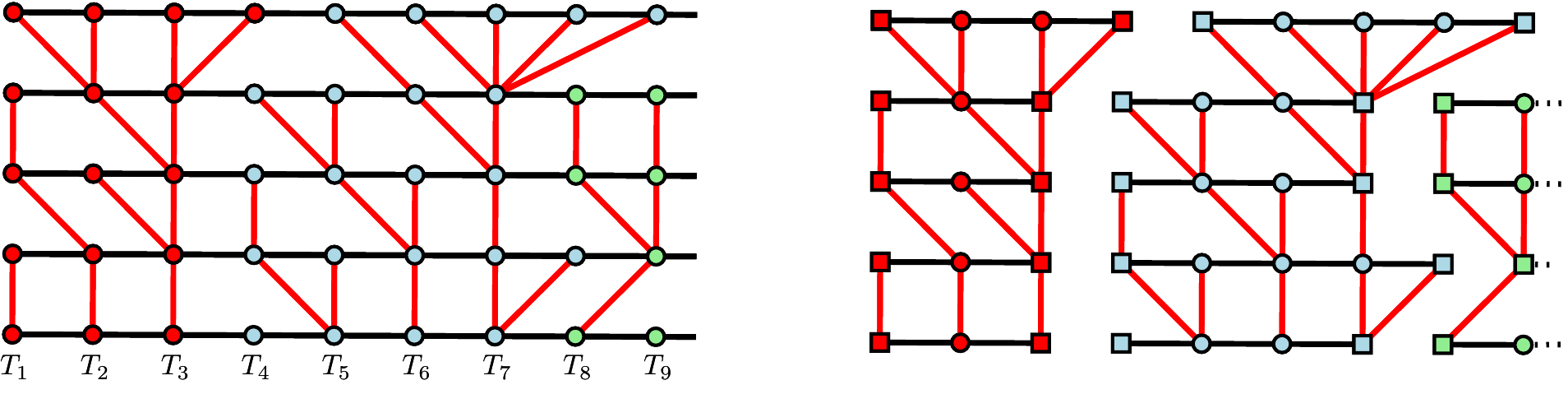}
 \caption{ \label{fig:block} The layer of height $4$ in the random graph obtained from an iid sequence of $\mu$-Galton--Watson trees. This layer can be decomposed into blocks of height $4$ and we represented the first three blocks in this sequence. The vertices on the left and right sides of the blocks are denoted with squares.}
 \vspace{-1em}
 \end{center}
 \end{figure}

Adding the horizontals edges $(i,j) \leftrightarrow (i+1,j)$ forms an infinite planar map (graph) which we call the \textbf{quarter-plane model} 
and denote by  $\mathcal{Q}_\infty$.
 % \sout{For $ r \geq 1$, the \textbf{block of height $r$} is the subgraph formed by the interval between two trees reaching height $r$. More formally,}
  Let $\xi_r\geq 1$ be minimal such that $T_{\xi_{r}}$ reaches height $r$. The \textbf{block of height $r$}, denoted $\mathcal{G}_r$, is defined to be the subgraph of $\mathcal{Q}_\infty$ induced by
 the vertices of $T_{1}, \dots , T_{\xi_{r}}$ at height less than or equal to $r$. That is, $\mathcal{G}_r$ consists of all the vertices of  $T_{1}, \dots , T_{\xi_{r}}$ at height less than or equal to $r$ and all the edges of $\mathcal{Q}_\infty$ (both horizontal and vertical) between them.
 % } 
 See Fig.~\ref{fig:block} for an illustration. 
  Clearly, we can speak of the two sets of vertices belonging respectively to the left side and right side of the block $ \mathcal{G}_{r}$, that is, the set of $r+1$ left most vertices and the set of $r+1$ right most vertices of the block. We define the \textbf{width} of $ \mathcal{G}_{r}$, denoted by $\mathsf{Width}( \mathcal{G}_{r})$, to be the minimal graph distance (in $ \mathcal{G}_{r}$) between a vertex in the left side and a vertex in the right side of $ \mathcal{G}_{r}$, see Fig.~\ref{fig:block}.  The width of a block is not uniformly large when $r$ is large: Indeed,  the first tree $T_{1}$ may actually reach the level $r$ in which case $ \mathsf{Width}( \mathcal{G}_{r}) = 0$. However, we will see that a large block typically has a large width. To this end, we consider the \emph{median} of $\mathsf{Width}( \mathcal{G}_{r})$:

\begin{definition} For each $r \geq 1$ let $f(r)$ be the median width of $\mathcal{G}_r$, that is, the largest number such that 
\[ \mathbb{P}\big( \mathsf{Width}( \mathcal{G}_{r}) \geq f(r)\big) \geq 1/2.\]
\end{definition}

As usual, the dependence on the offspring distribution is implicit in the notation. Obviously the value $1/2$ is not special. Note that, depending on $\mu$, one might have that $f(r)=0$ for small values of $r$. 
On the other hand, $\mathsf{Width}(\mathcal{G_r})$ is bounded deterministically by $2r$ since all vertices in the top layer of $\mathcal{G}_r$ share a common ancestor in level zero, so that $f(r)\leq 2r$ also. 
% ince we can always get from the top left corner of $\mathcal{G}_r$ to the top right corner of $\mathcal{G}_r$ by first travelling vertically downwards 
% The reader may convince himself that the diameter of a typical block $ \mathcal{G}_{r}$ cannot be much larger than a linear function of $r$ and so $f(r) = O(r)$. % surf up the trees to reach height $r$, then down to reach the origin of $T_{\xi_{r}}$ and then surf up.
Our main technical result shows that $f(r)$ is always roughly linear, more precisely:

\begin{theorem}[$f(r)$ is almost linear] \label{thm:main} If $\mu$ is critical and satisfies \eqref{eq:defstable} then there exists $c >0$ such that  
\begin{equation}
\label{eq:flinear}
f(r) \geq c \,r\end{equation}
for all $r$ sufficiently large.
On the other hand, if $\mu$ is critical and has finite non-zero variance then there exists $C>0$ such that 
\begin{equation}\label{eq:falmostlinear}
f(r) \geq r \exp( - C \sqrt{ \log r})\end{equation}
for all $r$ sufficiently large.
\end{theorem}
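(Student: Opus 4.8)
\emph{Overall strategy.} The plan is to reduce Theorem~\ref{thm:main} to a \emph{renormalisation inequality} that compares the median width $f$ at a small scale $r$ with its value at a much larger scale $Kr$, and then to solve the resulting recursion — the two regimes $\beta=1$ and $\beta>1$ being handled by different choices of the renormalisation scales. To set up the renormalisation I would fix a large integer $K$ and examine the block $\mathcal{G}_{Kr}$ of height $Kr$. Single out the trees $T_i$, $i\le\xi_{Kr}$, that reach height $\ge r$ — call them \emph{pillars} — and decompose $\mathcal{G}_{Kr}$ accordingly: below height $r$, the portion of $\mathcal{Q}_\infty$ lying strictly between two consecutive pillars is, by the i.i.d.\ structure of the forest, an independent copy (in distribution) of $\mathcal{G}_r$, and likewise for the portion to the left of the first pillar. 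Kolmogorov's estimate~\eqref{eq:kolmogorov} gives that the number $N$ of pillars is roughly geometric with mean $\asymp K^{\beta}$ (a tree reaching height $r$ reaches height $Kr$ with probability $\sim K^{-\beta}$), so for $K$ large one has $N\ge\tfrac12\mathbb{E}[N]$ with probability bounded below; and since each inter-pillar copy of $\mathcal{G}_r$ has width $\ge f(r)$ independently with probability $\ge\tfrac12$ (the very definition of $f$), a Chernoff bound shows that with probability $\ge\tfrac12$ a positive proportion of the $\asymp K^{\beta}$ sub-blocks are ``good'', i.e.\ have width $\ge f(r)$.

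\emph{The geometric core.} The substance is to show that any path realising $\mathsf{Width}(\mathcal{G}_{Kr})$ must traverse many good sub-blocks, so that $f(Kr)\gtrsim(\#\text{good sub-blocks})\cdot f(r)$. This is delicate because a competing crossing may rise to large heights — where $\mathcal{G}_{Kr}$ is far thinner — instead of staying near the bottom, so the proof must show that the pillars act as barriers at every relevant scale. Concretely, a crossing confined to heights $<r$ must pass through each inter-pillar sub-block; a crossing that climbs above height $r$ sees, at each height $h\in[r,Kr]$, the analogous pillar decomposition at scale $h$ (with $\asymp Kr/h$ pillars, each of typical transverse size $\asymp h$ at level $h$ by Yaglom's estimate~\eqref{eq:yaglom}), and must cross $\asymp Kr/h$ good sub-blocks of height $h$ there. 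Combining the contributions along the heights a crossing visits, together with monotonicity of $f$, yields a recursive lower bound for $f(Kr)$ in terms of $f$ at scales $r,2r,\dots,Kr$ — whose leading behaviour for $K$ bounded is $f(Kr)\gtrsim c_0\,K^{\beta}f(r)$, but which genuinely degrades once $K$ is comparable to $r$, since then the crossing can escape to the very top of the block (this degradation is, of course, consistent with the width being $o(r)$ in the finite-variance case, and it is exactly what prevents the argument from proving linearity there). I expect this crossing lemma, and in particular making the bookkeeping of the vertical excursions quantitatively sharp, to be the main obstacle: a soft version of it already delivers $\log\mathsf{Girth}_r/\log r\to1$ (Theorem~\ref{thm:geometry}(i)), but extracting the precise sub-polynomial correction requires controlling the losses uniformly as $K$ and $r$ vary.

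\emph{Solving the recursion.} A base case — that $f(r_0)\ge1$ for some large constant $r_0$ — is obtained by a direct argument (with probability bounded below, the top layer of $\mathcal{G}_{r_0}$ has at least two vertices and the two sides remain disjoint all the way down). In the $\alpha$-stable case $\beta>1$, the gain $K^{\beta}$ overwhelms the constant-factor loss $c_0$: fixing $K$ a large enough absolute constant makes the recursion read $f(Kr)\ge K f(r)$ for all large $r$, and iterating gives $f(K^{m}r_0)\ge K^{m}f(r_0)\ge c\,K^{m}r_0$, after which an easy interpolation over $r\in[K^{m}r_0,K^{m+1}r_0)$ yields~\eqref{eq:flinear}. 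In the finite-variance case $\beta=1$ one has only $f(Kr)\ge c_0 K f(r)$ with $c_0<1$ (about half the $\asymp K$ pillars give good sub-blocks, and the Chernoff bound costs another constant), so a bounded number of doublings would lose a polynomial factor. Instead one passes from scale $r_0$ to scale $r$ through $m$ intermediate scales of common ratio $K=(r/r_0)^{1/m}$, incurring a total loss $c_0^{m}$; since the recursion may only be used for $K$ moderate relative to the scale involved, $m$ cannot be taken to be $O(1)$, and optimising this trade-off forces $m\asymp\sqrt{\log r}$ and $K\asymp\exp(\sqrt{\log r})$ (which is indeed large enough, once $r$ is large, for the concentration estimates of the first paragraph). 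The accumulated multiplicative loss is then $c_0^{\,m}=\exp(-O(\sqrt{\log r}))$, which gives $f(r)\ge r\exp(-C\sqrt{\log r})$ as in~\eqref{eq:falmostlinear}, with $C$ determined by the optimisation over the number of scales.
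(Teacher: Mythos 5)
Your overall plan --- a renormalisation inequality relating $f$ at two scales, iterated with constant ratio in the $\alpha$-stable case and with $\asymp\sqrt{\log r}$ scales in the finite-variance case --- matches the paper's (Proposition~\ref{prop:renorm} and its analytic exploitation), and your treatment of the recursion is essentially right. (A small wrinkle: the paper iterates along scales $k^{n^2}$, which means \emph{increasing} ratios $k^{2n+1}$ rather than the common ratio $K=(r/r_0)^{1/m}$ you propose. A constant ratio cannot satisfy the moderateness constraint $K\lesssim s/f(s)$ at the smallest scale $s\approx r_0K$, where $\sqrt{\log s}\asymp(\log r)^{1/4}\ll\log K$. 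This only affects bookkeeping, not the shape of the answer.)

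The genuine gap is in the geometric core, i.e.\ in establishing the renormalisation inequality itself. You mark trees reaching height $r$ as pillars and observe that the inter-pillar pieces below height $r$ are i.i.d.\ copies of $\mathcal{G}_r$ which any crossing confined to heights $<r$ must traverse. But $\mathsf{Width}(\mathcal{G}_{Kr})$ is the minimum over left-right pairs at \emph{every} height $j\in[0,Kr]$, and the minimising pair typically sits near the top, where the block is thin. A crossing starting at height $j>r$ never needs to descend below $r$, so your bottom-layer decomposition places no constraint on it whatsoever. Your proposed fix --- look at ``pillar decompositions at every scale $h\in[r,Kr]$'' and ``combine contributions along the heights a crossing visits'' --- is precisely where you flag uncertainty, and rightly so: those decompositions overlap, their sub-block heights differ, and their contributions cannot simply be added to bound a single path.

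The paper resolves this with a device absent from your sketch, and it is the crux of the proof: tile $\mathcal{G}_r$ by \emph{overlapping} horizontal slabs $[\ell m,(\ell+2)m]$, $\ell=0,\dots,(r/m)-2$, with $m\ll r$, and then choose $\ell$ \emph{as a function of the starting height $j$} so that $j$ lies in the middle third of its slab. This gives a clean dichotomy: either the crossing leaves its slab, in which case its length is $\ge m/3$ because graph distance dominates height difference; or it stays inside, in which case it must cross every one of the $N_r(2m,\ell m)\asymp(r/m)^\beta$ sub-blocks of height $2m$ in that slab (Lemma~\ref{lem:manymany} gives this uniformly over $\ell$ via a union bound), and a Chernoff bound over the sub-block widths yields cost $\gtrsim(r/m)^\beta f(2m)$. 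The two branches are exactly the two terms of $\min\{m,(r/m)^\beta f(m)\}$. Without the ``middle-third'' choice of slab there is no usable stay-or-leave alternative for crossings near the top, and the inequality you need does not follow from the ingredients you list.
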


The above theorem is an analytic consequence of the following proposition which encapsulates the renormalisation scheme. Recall the definition of $\beta \geq 1$ at the end of the Introduction.

\begin{proposition} \label{prop:renorm} There exists $c >0$ such that for any $1 \leq m \leq c \cdot  r$ we have
\begin{equation}
\label{eq:renorm}
 f(r)  \geq c \cdot  \min \Big\{ m ;\,   (r/m)^{\beta} f(m) \Big\}.
 \end{equation}
\end{proposition}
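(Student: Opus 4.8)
The plan is to prove the renormalisation inequality \eqref{eq:renorm} by a two-scale argument: view the block $\mathcal{G}_r$ of height $r$ as being built out of sub-blocks of height $m$, arranged horizontally, and argue that any horizontal path in $\mathcal{G}_r$ from the left side to the right side must either descend to a low level (below height $\sim m$), or else cross many of the width-$m$ sub-blocks and hence accumulate a large length. Throughout, $m$ is small compared to $r$, so that $r/m$ is large.

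First I would set up the decomposition. Truncating the forest $T_1,T_2,\dots$ at height $m$ produces an i.i.d.\ sequence of ``$m$-slices''; a sub-block $\mathcal{G}^{(i)}_m$ is the stretch of consecutive trees up to and including the first one (after the previous sub-block) that reaches height $m$. By the definition of $f$, each such sub-block has $\mathsf{Width}(\mathcal{G}^{(i)}_m) \ge f(m)$ with probability at least $1/2$, independently across $i$. The key counting input is how many trees, hence how many sub-blocks, sit underneath the full block $\mathcal{G}_r$. By the Kolmogorov--Slack estimate \eqref{eq:kolmogorov}, the number of i.i.d.\ trees one must wait for before seeing one of height $\ge r$ is geometric with success probability $\asymp r^{-\beta}$, so $\xi_r \asymp r^{\beta}$ trees, which splits into roughly $\xi_r / \xi_m \asymp (r/m)^{\beta}$ sub-blocks of height $m$ (each consuming on average $\xi_m \asymp m^\beta$ trees) — at least, this is the heuristic; rigorously I would use concentration of these geometric sums together with the fact that $m \le cr$ to guarantee that with probability $\ge 3/4$, the block $\mathcal{G}_r$ contains at least $c' (r/m)^{\beta}$ complete sub-blocks of height $m$.

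Next comes the geometric dichotomy for a candidate geodesic $\gamma$ in $\mathcal{G}_r$ from the left side to the right side. Either (a) $\gamma$ ever visits a vertex of height $\le m$, in which case — because the top of $\mathcal{G}_r$ has all its vertices sharing a common level-zero ancestor, and more usefully because to get from the left boundary to such a low vertex one must descend — $\gamma$ already has length $\gtrsim m$; or (b) $\gamma$ stays entirely at heights $> m$, in which case $\gamma$ is confined above the level-$m$ truncation and must therefore cross, one after another, the horizontal ``gaps'' separating consecutive sub-blocks; restricted to the portion of $\mathcal{Q}_\infty$ lying inside a single sub-block and above height $m$... wait, above height $m$ the sub-block structure is exactly the trees above their height-$m$ truncation points, and a path that does not go below height $m$ and wants to traverse a sub-block of width $f(m)$ must have length at least $f(m)$ inside it. Summing over the $\gtrsim (r/m)^{\beta}$ sub-blocks of which at least half have width $\ge f(m)$ (by a further Chernoff bound on the i.i.d.\ width events, valid with probability $\ge 3/4$ once $(r/m)^\beta$ is large), one obtains $|\gamma| \gtrsim (r/m)^{\beta} f(m)$ in case (b). Taking the intersection of the two good events (total probability $\ge 1/2$), every left-to-right path has length $\ge c\min\{m,\ (r/m)^\beta f(m)\}$, which is exactly the assertion that $f(r) \ge c\min\{m, (r/m)^\beta f(m)\}$.

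I expect the main obstacle to be making the ``confinement above height $m$'' argument rigorous: one must be careful that a path is not allowed to ``cheat'' by using horizontal edges at levels between $m$ and $r$ to skip sub-blocks, and that the notion of ``crossing a sub-block'' genuinely forces a traversal of its width. The cleanest way I can see to handle this is to observe that if $\gamma$ stays strictly above height $m$, then projecting/restricting $\gamma$ to the sub-block $\mathcal{G}^{(i)}_m$ gives a path within $\mathcal{G}^{(i)}_m$ connecting its left side to its right side (the left and right ``sides'' of a sub-block being the extremal vertical rays, which $\gamma$ must cross to pass from one sub-block to the next), whence that restriction has length $\ge \mathsf{Width}(\mathcal{G}^{(i)}_m)$, and these sub-paths are edge-disjoint. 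The secondary nuisance is the bookkeeping on the two independent Chernoff estimates — one controlling the number of sub-blocks inside $\mathcal{G}_r$, one controlling the number of sub-blocks with large width — and arranging constants so that both, plus the event $\{\mathsf{Width}(\mathcal{G}_r) \ge c\min\{\ldots\}\}$, fit inside probability $1/2$; but this is routine once the geometric skeleton above is in place.
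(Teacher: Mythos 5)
Your plan has the right shape — a two-scale renormalisation, a dichotomy for geodesics, Chernoff and Kolmogorov--Slack to count sub-blocks — and the counting part (geometric $\xi_r \asymp r^\beta$, roughly $(r/m)^\beta$ sub-blocks, a Chernoff bound on the fraction with large width) matches what the paper proves in Lemma \ref{lem:manymany} and \eqref{eq:LD}. But the geometric dichotomy as you set it up does not hold, because you place every sub-block in the bottom layer of heights $[0,m]$ while the geodesic you are trying to lower-bound may run at entirely different heights. The paper's proof instead places the layer of sub-blocks \emph{adaptively}, around the height of the starting vertex.

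Concretely, both branches of your dichotomy fail. In case (b), a geodesic $\gamma$ confined to heights $>m$ never enters a sub-block $\mathcal{G}^{(i)}_m$, and that sub-block's width — a distance measured within heights $\le m$ — imposes no constraint on $\gamma$: above height $m$, each sub-block contributes only the single tree that survived to level $m$, and $\gamma$ can pass from one such tree to the next along a single horizontal edge, regardless of how wide the sub-block beneath was. Your attempted fix ("projecting/restricting $\gamma$ to the sub-block") is not a valid operation — a path that never visits heights $\le m$ has no image inside a graph living at heights $\le m$, and projecting vertically to level $m$ does not produce a path in the graph. In case (a), you assert that visiting a vertex of height $\le m$ costs $\gtrsim m$ because one must descend to reach it, but the left and right sides of $\mathcal{G}_r$ contain vertices at \emph{every} height from $0$ to $r$, so the geodesic may simply start and end low and never descend at all, again giving no lower bound. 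What makes the paper's argument close is that, given a starting vertex $x$ at height $j$, it chooses $\ell$ with $|\ell m - j|\ge m/3$ and $|(\ell+2)m - j|\ge m/3$ (taking $\ell=0$ when $j<m/3$ and $\ell=(r/m)-2$ when $j>r-m/3$), and then uses sub-blocks $\mathcal{G}_{2m}(i,\ell m)$ living in that same layer: a geodesic from $x$ either leaves the layer $[\ell m,(\ell+2)m]$ — costing at least $m/3$ in height-change by construction — or stays inside and must traverse each sub-block of that layer from its left side to its right side, which is where $\mathsf{Width}(\mathcal{G}_{2m}(i,\ell m))\ge f(2m)$ actually applies. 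Without this adaptive choice, your sub-blocks and your geodesic simply live at different heights, and neither branch of the dichotomy produces the inequality you want.
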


The proof of this proposition relies on a renormalisation scheme in which $ \mathcal{G}_{r}$ is split into smaller blocks distributed as $ \mathcal{G}_{m}$ for $0 \leq m \leq r$. Before starting the proof, let us introduce some notation. Let $m,h \geq 0$, and consider the layer of thickness $m$  between heights $h$ and $h+m$ in the quarter-plane model $\mathcal{Q}_\infty$. This layer is composed of a sequence of blocks of height $m$ which we denote by $ \mathcal{G}_{m}(i,h)$ for $i \geq 1$.  For any fixed $m,h \geq 0$, these blocks are of course independent  and distributed as $ \mathcal{G}_{m}$ (indeed, $\mathcal{G}_m(1,0)$ is equal to $\mathcal{G}_m$). When $h+m \leq r$, we denote by $N_{r}(m,h)$ the maximal $i$ such that the block $ \mathcal{G}_{m}(i,h)$ is a subblock of $ \mathcal{G}_{r}$.

\vspace{1.5em}
\begin{figure}[!h]
 \begin{center}
 \includegraphics[width=16cm]{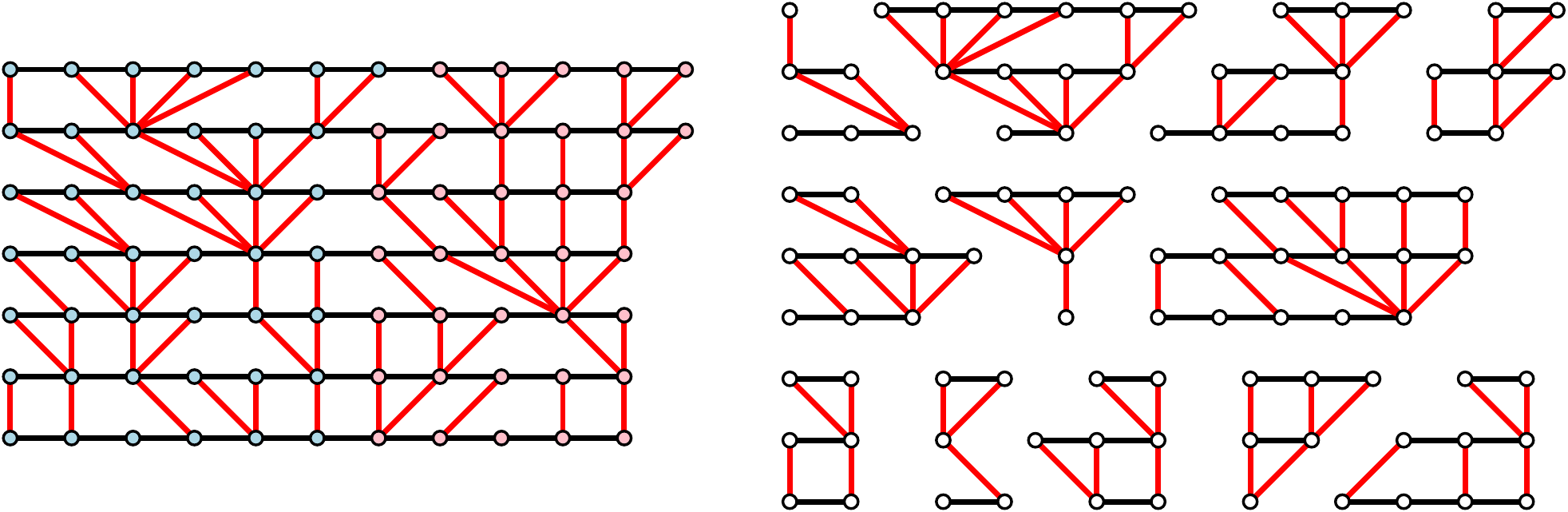}
 \caption{ \label{fig:renormalisation} Decomposing a big block into smaller blocks: On the left we see the blocks $ \mathcal{G}_{6}(1,0)$ and $ \mathcal{G}_{6}(2,0)$. These two blocks are further decomposed into the blocks $ \mathcal{G}_{2}(i,h)$ on the right figure for $h \in \{0,2,4\}$ and $i \geq 1$.}
 \end{center}
 \end{figure}
\vspace{-1.5em}

We also recall the following classical Chernoff-type bound for sums of indicator random variables \cite[Corollary A.1.14]{AlonSpencer}, which will be used throughout the paper: 
For every $\eps>0$ there exists a constant $c_\eps>0$ such that for every $k\geq 1$ and every sequence  $X_1,\ldots X_k$ of mutually independent $\{0,1\}$-valued random variables, the sum $Y= \sum_{i=1}^k X_i$ satisfies the bound
\begin{equation}
\label{eq:ASChernoff}
\P\Bigl(\bigl|Y-\E[Y]\bigr| \geq \eps\, \E[Y] \Bigr) \leq 2 e^{-c_\eps \E[Y]}.
\end{equation}

We are now ready to proceed with the proof of Proposition \ref{prop:renorm}.

\proof[Proof of Proposition \ref{prop:renorm}] 
We will prove that 
there exists $c >0$ such that 
\begin{equation}
\label{eq:renorm}
 f(r)  \geq c \cdot  \min \Big\{ m ;\,   (r/m)^{\beta} f(2m) \Big\}.
 \end{equation}
 for every $1 \leq m \leq c \cdot  r$ that divides $r$. The full proof of the claim as originally stated (in which $m$ replaces $2m$ and $m$ is not assumed to divide $r$) is very similar but requires messier notation.

   We  bound from below the width of the block $ \mathcal{G}_{r}$ using the widths of the blocks $ \mathcal{G}_{2m}(i,h)$ for $h$ of the form $\ell \cdot m$ with $0 \leq \ell \leq (r/m)-2$. Suppose that we pick a point $x$ on the left side of $ \mathcal{G}_{r}$, say at height $0 \leq j \leq r$. If $m/3 \leq j \leq r-m/3$ then we can clearly take $ 0 \leq \ell \leq  (r/m)-2$ such that $| \ell m -j| \geq m/3$ and $|(\ell+2)m-j| \geq m/3$. Otherwise, we either have that $ 0\leq  j < m/3$ and take $\ell=0$ or we have that $ r-m/3 < j \leq r$ and take $\ell =(r/m)-2$. We then have an alternative: either the shortest path from $x$ to the other side of $ \mathcal{G}_{r}$ stays in the layer between heights $\ell m$ and $(\ell +2)m$, or else it leaves it at some point. In the second case we know that the length of the path is at least $m/3$ by our assumption on $j$ and $\ell$ and since the graph distance between any two points in the graph is at least their height difference. On the other hand, in the first case, the length of such a path  is at least
$$ \sum_{i=1}^{N_{r}(2m,\ell m)} \mathsf{Width}\big( \mathcal{G}_{2m}(i, \ell m)\big).$$ This is because the path must cross, from left to right, every subblock of height $2m$ that is in that layer and that belongs to the block $ \mathcal{G}_{r}$. See Fig.~\ref{fig:trans}. We conclude that
\begin{equation}\label{eq:logic} \mathsf{Width}( \mathcal{G}_{r}) \geq \min \Big \{ {m \over 3} , \min _{0 \leq \ell \leq (r/m)-2} \sum_{i=1}^{N_{r}(2m,\ell m)} \mathsf{Width}\big( \mathcal{G}_{2m}(i, \ell m)\big) \Big \} \, .\end{equation}

\begin{figure}[!h]
 \begin{center}
 \includegraphics[width=10cm]{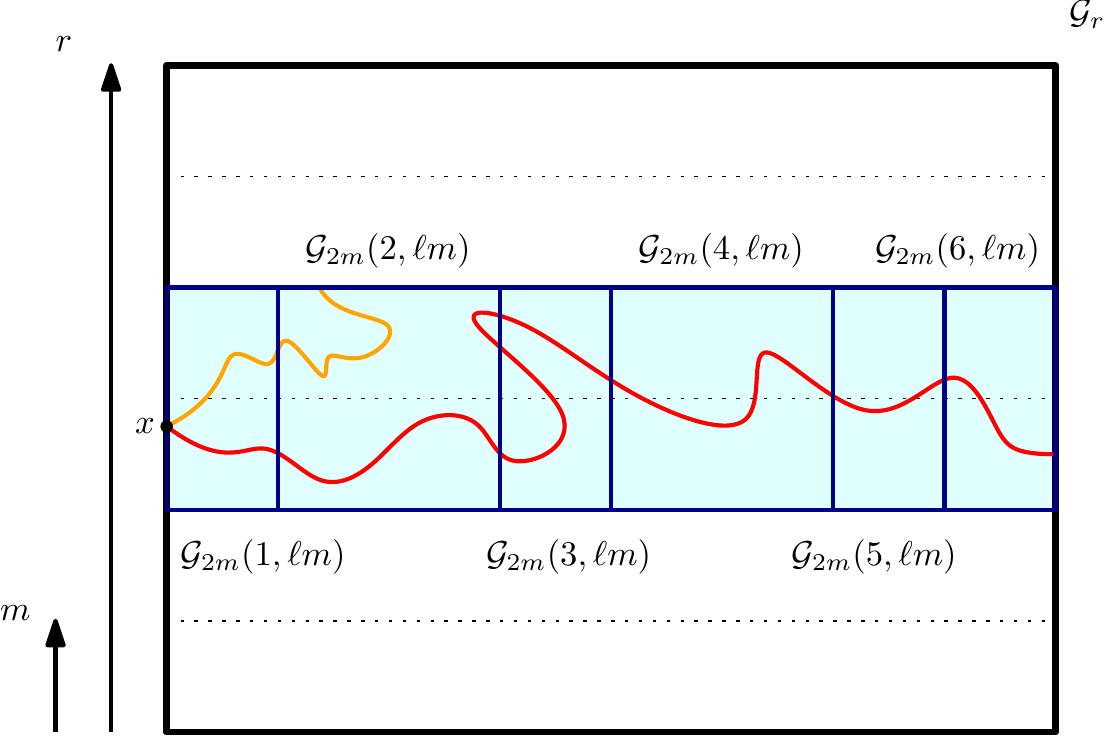}
 \caption{ 
 \small{\label{fig:trans} Illustration of the proof. Any point on the left-hand side of $ \mathcal{G}_{r}$ is well inside a layer of height $2m$ starting at some height $\ell m$. The geodesic from $x$ to the other side of the block either leaves this layer (in orange) or must traverse all the $N_{r}(2m, \ell m)$ sub-blocks (in red).}}
 \end{center}
 % \vspace{-0.5em}
 \end{figure}

 For fixed $h,m \geq 1$ the blocks $ \mathcal{G}_{2m}(i,h)$ are independent and distributed as $ \mathcal{G}_{2m}$. Thus, by the definition of the function $f(2m)$ and \eqref{eq:ASChernoff}, we have that 
$$ \mathbb{P}\left( \sum_{i=1}^{k} \mathsf{Width}( \mathcal{G}_{2m}(i,h)) \leq \frac{k \cdot f(2m)}{4}\right) \leq \mathbb{P}\Big( \mathrm{Binomial}(k,1/2) \leq k/4\Big) \leq e^{- \eta k}$$ for every $k\geq 1$, where $\eta >0$ is a constant independent of $k,h$ and $m$. Summing-up over all possibilities for $h = \ell \cdot m$ with $\ell \in \{0,\ldots,(r/m)-2\}$ we deduce that with probability at least $ 1- \frac{r}{m} e^{- \eta k}$ we have
 \begin{eqnarray} \label{eq:LD} \forall 0 \leq \ell \leq (r/m)-2, \quad \quad  \sum_{i=1}^{k} \mathsf{Width}( \mathcal{G}_{2m}(i, \ell m)) \geq \frac{k \cdot f(2m)}{4}. \end{eqnarray}
We now estimate $N_{r}(2m ,\ell m)$:
 \begin{lemma}  \label{lem:manymany} There exists $c>0$ such that for every $1 \leq m \leq c r$ we have
$$  \mathbb{P}\left(\min_{0 \leq \ell \leq (r/m)-2} N_{r}(2m, \ell m) \geq c \left( \frac r m\right)^{\beta}\right) \geq \frac{7}{8}.$$
\end{lemma}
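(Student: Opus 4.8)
The plan is to reduce the statement to two ingredients: a lower bound on the whole ``width profile'' of the block $\mathcal G_r$ valid simultaneously at every level, and a routine binomial-thinning estimate.

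\emph{Setup and reduction.} Fix $\ell$ with $0\le\ell\le (r/m)-2$ and write $w_k:=\#\partial[\mathcal G_r]_k$ for the number of vertices at height $k$ in $\mathcal G_r$ (so $w_0=\xi_r$). Let $S_1,S_2,\dots$ be the descendant subtrees hanging from the successive level-$\ell m$ vertices of the forest $T_1,T_2,\dots$, listed in planar order; these are i.i.d.\ $\mu$-Galton--Watson trees. A height-$2m$ subblock $\mathcal G_{2m}(i,\ell m)$ is contained in $\mathcal G_r$ if and only if all of its founding level-$\ell m$ subtrees lie among the first $w_{\ell m}$ of the $S_t$'s, and each complete such subblock is terminated by exactly one subtree of height $\ge 2m$; hence
\[
N_r(2m,\ell m)\;=\;\#\bigl\{\,t\le w_{\ell m}\;:\;\mathsf{Height}(S_t)\ge 2m\,\bigr\}.
\]
Let $p_j:=\mathbb P(\mathsf{Height}(T)\ge j)$; by \eqref{eq:kolmogorov} there is $c_0>0$ with $p_j\ge c_0 j^{-\beta}$ for all $j\ge 1$.

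\emph{Step 1 (simultaneous lower bound on the width profile).} I would show there is $\delta>0$ with
\[
\mathbb P\Bigl(\,w_k\ge\delta\,r^{\beta}\ \text{ for all }0\le k\le r-2m\,\Bigr)\;\ge\;\tfrac{15}{16},
\]
which is stronger than needed. Here $(w_k)_{0\le k\le r}$ is the population process of a critical $\mu$-Galton--Watson forest: $w_0=\xi_r$ is geometric with parameter $p_r\sim c\,r^{-\beta}$, so $\mathbb P(\xi_r\ge\delta_0 r^{\beta})\ge 31/32$ for $\delta_0$ small; and conditionally on $w_0$ the profile is that of $w_0\asymp r^{\beta}$ independent $\mu$-Galton--Watson trees, the last of which is conditioned to survive to generation $r$ — a conditioning of probability bounded below when $w_0\gtrsim r^{\beta}$, and which only increases the population. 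One then bounds the \emph{running minimum} of this branching process directly rather than level by level: if the population first drops to $\le\delta r^{\beta}$ at some generation $k\le r-2m$, then from so few individuals the chance of nevertheless surviving the remaining $\ge 2m$ generations up to $r$ is small, and weighing this against the mass on $\{$survive to $r\}$ forces $\mathbb P(\min_{k\le r-2m}w_k<\delta r^{\beta})\le g(\delta)$ with $g(\delta)\to 0$; fix $\delta<\delta_0$ with $g(\delta)<1/16$. (After Lamperti rescaling this is the statement that an $\alpha$-stable continuous-state branching process started from macroscopic mass and conditioned to survive to time $\asymp 1$ stays bounded away from $0$ on a compact interval, with a lower-tail bound on the infimum.)

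\emph{Step 2 (binomial thinning and conclusion).} On the event $\mathcal A$ of Step 1 we have $w_{\ell m}\ge\lceil\delta r^{\beta}\rceil$ for every $\ell$, whence $N_r(2m,\ell m)\ge B_\ell:=\#\{\,t\le\lceil\delta r^{\beta}\rceil:\mathsf{Height}(S_t)\ge 2m\,\}$; and $B_\ell$ is a $\mathrm{Binomial}(\lceil\delta r^{\beta}\rceil,p_{2m})$ variable whose law is unaffected by $\mathcal A$, being measurable with respect to the i.i.d.\ sequence $(S_t)_t$ alone. Since $\mathbb E[B_\ell]\ge\lceil\delta r^{\beta}\rceil c_0(2m)^{-\beta}\ge\delta c_0 2^{-\beta}(r/m)^{\beta}$, the Chernoff bound \eqref{eq:ASChernoff} gives $\mathbb P\bigl(B_\ell<\tfrac12\mathbb E[B_\ell]\bigr)\le 2e^{-\eta(r/m)^{\beta}}$ for a constant $\eta>0$. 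Setting $c:=\tfrac14\delta c_0 2^{-\beta}$ and summing over the at most $r/m$ values of $\ell$,
\[
\mathbb P\bigl(\exists\,\ell:\ N_r(2m,\ell m)<c\,(r/m)^{\beta}\bigr)\;\le\;\mathbb P(\mathcal A^{\mathrm c})+\tfrac rm\cdot 2e^{-\eta(r/m)^{\beta}}\;\le\;\tfrac1{16}+\tfrac1{16}\;=\;\tfrac18
\]
once $r/m$ exceeds an absolute constant; shrinking the constant $c$ in the lemma so that $m\le c\,r$ forces $r/m$ to be that large makes this hold for all admissible $m$.

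\emph{Main obstacle.} The whole difficulty sits in Step 1. A naive union bound there fails: at intermediate and high levels $w_{\ell m}$ is of order $r^{\beta}$ but with genuine multiplicative fluctuations (its rescaled limit is a nondegenerate positive variable with a density near $0$), so $\mathbb P(w_{\ell m}<\delta r^{\beta})$ is only $O(\delta)$ — not summable over the $\sim r/m$ levels. This is exactly why the lemma puts the minimum over $\ell$ inside the probability: one must exploit the branching structure to control the running minimum of the profile as a single event, and the part requiring the most care is the behaviour near height $r$, where the conditioning that $T_{\xi_r}$ reaches height $r$ has to be used quantitatively (via the Kolmogorov and Yaglom estimates \eqref{eq:kolmogorov}--\eqref{eq:yaglom}) to rule out a bottleneck.
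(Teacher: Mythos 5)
Your overall structure mirrors the paper's: reduce Lemma \ref{lem:manymany} to a uniform lower bound on the level-by-level population profile of $\mathcal{G}_r$ (your Step 1, which is the paper's Lemma \ref{lem:CSBP}), and then finish with a binomial-thinning estimate and a union bound over levels (your Step 2, which matches the paper's use of the variables $X(h,2m,k)$). Step 2 is essentially correct, although the parenthetical claim that the law of $B_\ell$ ``is unaffected by $\mathcal{A}$'' is misleading --- $B_\ell$ and $\mathcal{A}$ are not independent; what you actually use (and all you need) is the trivial inclusion $\mathcal{A}\cap\{B_\ell<\tfrac12\E B_\ell\}\subseteq\{B_\ell<\tfrac12\E B_\ell\}$.

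The genuine gap is in Step 1. You argue that if the profile first drops to $\le\delta r^{\beta}$ at some generation $k\le r-2m$, then the chance of surviving the remaining $r-k\ge 2m$ generations is small. But this is false for small $m$: from $\delta r^{\beta}$ particles, the probability of reaching generation $r$ is bounded above by $\delta r^{\beta}\,p_{2m}\asymp\delta\,(r/m)^{\beta}$, which is huge, not small, when $m\ll r$. Your bound would therefore only rule out a bottleneck when $m\asymp r$, whereas the lemma must cover all $m\le cr$ down to $m=1$. The paper repairs exactly this by first replacing the conditioning ``$T_{\xi_r}$ reaches height $r$'' with the slightly stronger event ``$T_{\xi_r}$ reaches height $(1+\varepsilon')r$'' (which holds with probability $\ge 1-\varepsilon/3$): then whenever the profile drops below $\delta r^{\beta}$ at some $h_0\le r$, there are always at least $\varepsilon' r$ remaining generations, and $\delta r^{\beta}\,p_{\varepsilon' r}\asymp\delta(\varepsilon')^{-\beta}$ is made small by choosing $\delta$ small --- a bound that is uniform in $m$. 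You gesture at this in your ``main obstacle'' paragraph, but the argument you actually give does not implement it.

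A second, more minor imprecision: you assert that the conditioning on $w_0=\xi_r$ ``only increases the population.'' In fact the trees $T_1,\dots,T_{\xi_r-1}$ are conditioned \emph{not} to reach height $r$, which decreases their profiles, so the monotonicity you invoke is not available. The paper sidesteps this by an exchangeability argument: it compares the law conditional on $\mathcal{W}_{r,k}=\{\xi_r=\xi_{(1+\varepsilon')r}=k\}$ to the law conditional on the symmetrized event $\mathcal{U}_{r,k}$ via a random permutation (equation \eqref{eq:lemma2distributionalequality}), and then bounds the conditional probability by $\P(\,\cdot\,\cap\mathcal{V}_{r,k})/\P(\mathcal{U}_{r,k})$ with $\mathcal{V}_{r,k}$ the unconditioned event that at least one tree reaches $(1+\varepsilon')r$, using that $\P(\mathcal{U}_{r,k})$ is bounded below uniformly in the relevant range of $k$. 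Absent something of this kind, the heuristic ``the conditioning can only help'' is not valid.
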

\proof We first consider the analogous estimate in the case $m=0$. In this case, $N_{r}(0,h)$ is the just the number of vertices at height $h$ in the block $ \mathcal{G}_{r}$. We claim that we can find $c>0$ sufficiently small such that 
 \begin{eqnarray} \label{eq:CSBP}
 \mathbb{P}\left(\min_{0 \leq h \leq r} N_{r}(0,h) \geq c \cdot r^\beta \right) \geq 15/16 \end{eqnarray}
for every $r\geq 1$.  This kind of result is part of the folklore in the theory of branching processes (see e.g. \cite{DLG02}) but since we were not able to locate a precise reference for it we include a direct derivation at the end of this subsection (Lemma \ref{lem:CSBP}).

We now apply \eqref{eq:CSBP} to prove the claim in the statement of the lemma. Let $c$ be the constant from \eqref{eq:CSBP}.
Fix $0 \leq h \leq r$ and denote by $X(h,2m,k)$ the number of trees   whose origin in the line of height $h$ has index less than $k \geq 1$ and which reach height $2m$ relative to its starting height of $h$. With this notation we have $N_{r}(2m,h) = X(h,2m,N_{r}(0,h))$. For fixed $k \geq 1$ the random variable $X(h,2m,k)$ has binomial distribution with $k$ trials and success parameter $\mathbb{P}(\mathrm{Height}(T) \geq 2m)$. On the event $\{N_{r}(0,h) \geq c r^{\beta}\} \cap \{N_{r}(2m,h) \leq c' (r/m)^{\beta}\}$ we have $X(h,2m,\lceil c r^{\beta} \rceil) \leq c' (r/m)^{\beta}$. Thus, applying \eqref{eq:ASChernoff} and \eqref{eq:kolmogorov} we deduce that there exist constants $c'>0$ and $\delta>0$ such that
 % there exists $c' >0$ such that the probability that 
 \[ \P\Bigl(N_{r}(2m,h)\leq c'(r/m)^{\beta} \mbox{ and } N_{r}(0,h) \geq c \cdot r^\beta \Bigr) \leq \P\Bigl(X(h,2m,\lceil c r^{\beta} \rceil) \leq c' (r/m)^{\beta}\Bigr) \leq 
  e^{-\delta (r/m)^{\beta}}\] for every $r,m,$ and $h$. For sufficiently large values of $r/m$ we have that $ (r/m) e^{-\delta (r/m)^{\beta}} \leq 1/16$,  and can proceed to apply a union bound over values of $h$ of the form $\ell m$ for $\ell \in \{0, \ldots, (r/m)-2\}$. Indeed, gathering up the pieces above we have that
 \begin{align*}  \mathbb{P}( \exists 0 \leq \ell \leq (r/&m)-2 : N_{r}(2m, \ell m) \leq  c'(r/m)^{\beta})\\  &\leq \mathbb{P}\left( \min_{0 \leq h \leq r} N_{r}(0,h) \leq c \cdot r^{\beta}\right) +  \sum_{\ell=0}^{(r/m)-2} \mathbb{P}\left( N_{r}(2m,\ell m) \leq c'(r/m)^{\beta}, \,\, N_{r}(0,\ell m) \geq cr^{\beta}\right)\\ & \leq   \frac{1}{16} +  (r/m) e^{-\delta (r/m)^{\beta}} \leq \frac{1}{16}+ \frac{1}{16} = \frac{1}{8},  \end{align*} and this proves the lemma. \endproof

We now return to the proof of Proposition \ref{prop:renorm}. Let $c$ be the constant from Lemma~\ref{lem:manymany}.   We take $k = \lfloor c(r/m)^{\beta} \rfloor$ in \eqref{eq:LD} and assume that $r/m$ is large enough to ensure that $ (r/m) e^{- \eta k} \leq 1/8$. Using Lemma \ref{lem:manymany} and intersecting with the event in \eqref{eq:LD} we deduce by \eqref{eq:logic} that
 \begin{eqnarray*} \mathbb{P}\left( \mathsf{Width}( \mathcal{G}_{r}) <  \frac{m}{3} \wedge \frac{k\cdot f(2m)}{4}\right) &\leq&  \mathbb{P}\left(\min_{0 \leq \ell \leq (r/m)-2} N_{r}(2m, \ell m) < c \left( \frac r m\right)^{\beta}\right) + (r/m) e^{-\eta k} \\ & \leq & \frac{1}{8} + \frac{1}{8} = \frac{1}{4}.  \end{eqnarray*}
By definition of $f(r)$ we thus deduce that
 $$f(r) \geq \min \Big\{ \frac{m}{3} ;\,  \frac{c}{4} (r/m)^{\beta}f(2m)\Big\} \geq c'' \inf \big\{ m ;\, (r/m)^{\beta} f(2m)\big\},$$ for some $c''>0$ and every $r \geq m \geq 1$ such that $m$ divides $r$ and $r/m$ is sufficiently large.
  \endproof

% This clearly implies the inequality \eqref{eq:renorm} in the case that $2m$ divides $r$.
%   % We leave the reader adapting the above proof is the case when $m$ does not divide $r$, which readily yields the statement of the proposition. 
% Adapting the proof to the case that $2m$ does not divide $r$ is straightforward and is left to the reader. 

\begin{proof}[Proof of Theorem \ref{thm:main} from Proposition \ref{prop:renorm}] 
Assume that $f$ satisfies the conclusions of Proposition \ref{prop:renorm} with the appropriate $\beta$ and that $f(r)>0$ for all  sufficiently large $r$, which is easily seen to be satisfied by our function $f$. (One way to prove this formally is to note that the width is zero if and only if $\min_{0\leq h \leq r} N_r(0,h) =1$, and then  apply Lemma \ref{lem:CSBP}, below. Easier and more direct proofs are also possible.)

 First suppose that $\beta >1$ (i.e.~that we are in the stable case) and let $k$ be an integer that is larger than both $c^{-1}$ and $c^{-1/(\beta-1)}$. Let $a_n = f(k^n)/k^n$. We claim that
\[
\liminf_{n\to\infty} a_n  > 0.
\]
Indeed, applying Proposition~\ref{prop:renorm} with $r=k^{n+1}$ and $m=k^{n}$ yields that
\[a_{n+1} \geq \min\left\{\frac{c k^n}{k^{n+1}},\, c k^{-1} \left(\frac{k^{n+1}}{k^n}\right)^\beta a_{n} \right\} 
% = \min\{ \frac{c}{k},\, c k^{\beta-1} a_n\}
\geq \min\left\{ \frac{c}{k},\, a_n\right\}, \]
and since $a_n>0$ for some $n\geq 1$ it follows by induction that $\liminf_{n\to\infty} a_n>0$ as claimed. This establishes the inequality \eqref{eq:flinear} for values of $r$ of the form $r=k^n$. The inequality \eqref{eq:flinear} follows for general values of $r$ by taking $m= k^{\lfloor \log_k r -1\rfloor}$ in Proposition~\ref{prop:renorm}. 

Now suppose that $\beta=1$, and let $k \geq 1/c$ be an integer.
We put $b_n =f(k^{n^2})/k^{(n-1)^2}$ and will show that
\[
\liminf_{n\to\infty} b_n  > 0.
\]
Indeed, applying Proposition~\ref{prop:renorm} with $r=k^{(n+1)^2}$ and $m=k^{n^2} \leq c k^{(n+1)^2}$ yields that
\[
b_{n+1} \geq \min \left\{ c\frac{k^{n^2}}{k^{n^2}},\,  c \frac{k^{(n+1)^2+(n-1)^2}}{k^{2n^2}} b_n \right\} = \min\{c,\, ck^2 b_n \} \geq \min\{c,\, b_n\},
\]
and since $b_n>0$ for some $n\geq 1$ it follows by induction that $\liminf_{n\to\infty} b_n>0$ as desired. This establishes the inequality \eqref{eq:falmostlinear} for values of $r$ of the form $r=k^{n^2}$. 
The inequality \eqref{eq:falmostlinear} for other values of $r$ follows by applying Proposition \ref{prop:renorm} with 
$m=k^{\left\lfloor \sqrt{\log_k r} -1 \right\rfloor^2}$. \qedhere
\end{proof}

\begin{remark}
With a little further analysis, it can be shown that (disregarding constants) the bound $f(r)\geq r e^{-O(\sqrt{\log r})}$ is the best that can be obtained from Proposition \ref{prop:renorm} in the case $\beta=1$. 
\end{remark}

We now owe the reader the proof of \eqref{eq:CSBP}:
\begin{lemma}
\label{lem:CSBP}
 With the notation of Lemma \ref{lem:manymany}, for any $ \varepsilon>0$ we can find $\delta >0$ such that for every $r \geq 1$ we have that
\[ \mathbb{P}\Bigl( \min_{0 \leq h \leq r} N_{r}(0,h) \leq \delta r^{\beta}\Bigr) \leq \varepsilon.\]
\end{lemma}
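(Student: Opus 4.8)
Since $N_r(0,h)\ge 1$ for every $h\le r$ (the tree $T_{\xi_r}$ reaches level $r$), the asserted bound is automatic for any bounded range of $r$ as soon as $\delta$ is small, so it suffices to exhibit $\delta>0$ and $r_0$ with the bound valid for all $r\ge r_0$. Write $p_k:=\mathbb{P}(\mathsf{Height}(T)\ge k)\sim c\,k^{-\beta}$. The plan is to bound $N_r(0,h)$ from below \emph{uniformly} in $h$ by splitting $[0,r]$ into a \emph{bulk} $h\le\eta r$, a \emph{middle} $\eta r\le h\le(1-\eta')r$, and a \emph{top} $(1-\eta')r\le h\le r$, for two small constants $\eta,\eta'=\eta(\varepsilon),\eta'(\varepsilon)$; the bulk is handled by the many trees reaching level $h$, and the middle and top by the single tree $T_{\xi_r}$ (a $\mu$-Galton--Watson tree conditioned on $\mathsf{Height}\ge r$).

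First I would record two preliminary ingredients. (a) Since $\xi_r-1$ is geometric with parameter $p_r$, we have $\mathbb{P}(\xi_r\ge c_1 r^\beta)\ge 1-\varepsilon/10$ uniformly in $r$; condition on this event. Letting $W_r(h)$ be the number of trees among $T_1,\dots,T_{\xi_r}$ reaching level $h$, the quantity $W_r(h)$ is non-increasing in $h$ and, on $\{\xi_r\ge k\}$, stochastically dominates a $\mathrm{Binomial}(k,p_h)$ variable; applying the Chernoff bound \eqref{eq:ASChernoff} at the dyadic levels $h=2^{-j}r$ and interpolating by monotonicity produces $c_2>0$ with $W_r(h)\ge c_2(r/h)^\beta$ simultaneously for all $1\le h\le\eta r$ with probability $\ge 1-\varepsilon/10$ --- crucially the failure probability at scale $2^{-j}r$ is $e^{-c2^{j\beta}}$, and $\sum_{j\ge\log_2(1/\eta)}e^{-c2^{j\beta}}$ is small once $\eta$ is small, so the $O(\log r)$ number of scales is harmless. (b) A stopping-time estimate: for a $\mu$-Galton--Watson process started from $b$ particles, the probability that its population dips to $\le a$ at some generation of $[0,t]$ yet the process still survives to generation $t$ is at most $C\,a\,p_t$ (stop at the first dip and apply the branching property, bounding the residual survival probability by $a\,p_{t-\cdot}$). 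This will let us police an entire window of heights once the population is known to be healthy at its lower endpoint.

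Next I would treat the three regimes. For the \emph{bulk}, fix a dyadic scale $L=2^{-j}r\le\eta r$ and $h\in[L/2,L]$: by (a) there are $\ge c_2(r/L)^\beta$ trees reaching level $L$; conditionally on reaching level $L$, each has population $\ge\delta_1 L^\beta$ at level $L/2$ with probability $\ge 1/2$ by Yaglom's limit \eqref{eq:yaglom}, and then by (b) keeps its population $\ge\delta_2 L^\beta$ at every height of $[L/2,L]$ with probability $\ge 1/4$ (the extra conditioning ``$\mathsf{Height}<r$'' on the non-spine trees perturbs these probabilities only by a bounded factor, since a tree reaching level $L\le\eta r$ fails to reach level $r$ with probability bounded below). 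These events being independent across trees, \eqref{eq:ASChernoff} gives $\inf_{L/2\le h\le L}N_r(0,h)\ge c\,(r/L)^\beta\delta_2 L^\beta=c\,\delta_2\,r^\beta$ except on an event of probability $e^{-c(r/L)^\beta}$; summing over dyadic scales $L\le\eta r$ costs at most $Ce^{-c\eta^{-\beta}}\le\varepsilon/4$. For the \emph{middle} and \emph{top}, condition on generation $\lceil\eta r\rceil$ of $T_{\xi_r}$: by \eqref{eq:yaglom} its population there exceeds $\delta_1(\eta r)^\beta$ with probability $\ge 1-\varepsilon/10$, and on that event the conditional probability of reaching level $r$ is bounded below by a constant $q=q(\eta,\eta')>0$. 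Letting $\tau$ be the first generation $\ge\eta r$ at which $Z_h^{(\xi_r)}\le\delta_3 r^\beta$, the branching property at $\tau$ bounds the conditional chance that $\tau\le(1-\eta')r$ while still reaching level $r$ by $\delta_3 r^\beta p_{\eta'r}\le C\delta_3(\eta')^{-\beta}$; dividing by $q$ and choosing $\delta_3$ small makes this $\le\varepsilon/10$. Finally, on the short top window $[(1-\eta')r,r]$, conditionally on generation $(1-\eta')r$ the population is $\asymp r^\beta$ (again on an event of probability $\ge1-\varepsilon/10$), and a critical branching process started from $\asymp r^\beta$ particles neither dies nor drops below $\delta r^\beta$ within $\eta'r$ further generations except with probability $\le\varepsilon/10$, provided $\eta'$ is small (it is a nonnegative martingale; in the $\alpha$-stable case one uses instead the hitting estimate (b) run from level $(1-\eta')r$, and the conditioning on $\mathsf{Height}\ge r$ costs only a bounded factor). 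Taking $\delta$ to be the minimum of the constants produced and union-bounding the $O(1)$ exceptional events completes the argument.

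The step I expect to be the main obstacle is precisely the passage from a \emph{pointwise} lower bound on $N_r(0,h)$ --- which is essentially immediate from \eqref{eq:yaglom} --- to one that is \emph{uniform} in $h$: there are $\asymp r$ relevant heights, so no naive union bound is available. The two devices that rescue the argument are the monotonicity of $h\mapsto W_r(h)$ together with the dyadic decomposition (so the bulk genuinely reduces to summing a super\-exponentially decaying sequence of failure probabilities rather than multiplying by $r$), and the discipline of conditioning only ``locally'', relative to an intermediate generation at which the population is already known to be of order $r^\beta$, so that every survival probability one divides by is bounded below rather than of order $p_r\to0$ --- the latter being exactly what renders the crude identity $\mathbb{P}(\,\cdot\mid\mathsf{Height}\ge r)=\mathbb{P}(\,\cdot\,,\,\mathsf{Height}\ge r)/p_r$ useless. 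The delicate sub-case within this is the top window $h$ very close to $r$, where $r-h$ is small so that survival to level $r$ is a weak constraint; this is why that window is isolated and handled by a separate martingale/hitting estimate.
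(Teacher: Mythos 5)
Your architecture is genuinely different from the paper's, and it has one real gap. The paper avoids any decomposition of $[0,r]$ into regimes. Its single unifying device is to condition on the event $\{\xi_r=\xi_{(1+\varepsilon')r}=k\}$ and then, via a random-permutation argument, replace this conditioning by the unconditional event $\mathcal{V}_{r,k}$ (at least one of the first $k$ trees reaches $(1+\varepsilon')r$), paying only a bounded multiplicative cost since $\mathbb{P}(\mathcal{U}_{r,k})$ is bounded below. The payoff is an $\varepsilon' r$ \emph{cushion above level $r$}: if the aggregate population $\mathbf{X}_k(h_0)$ ever drops to $\le \delta r^\beta$ at the first bad height $h_0\le r$, the descendants must still climb at least $\varepsilon' r$ further generations to realize $\mathcal{V}_{r,k}$, giving the uniform bound $\delta r^\beta\,p_{\varepsilon' r}=O(\delta)$ \emph{regardless of $h_0$}. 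Your three-regime scheme is designed around precisely the difficulty that this cushion dissolves; that it needs to exist at all signals what is being lost.

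The concrete gap is in your device (b). Stopping at the first dip $\tau$ and applying the branching property gives the residual survival probability $a\,p_{t-\tau}$, which is $O(a\,p_t)$ only when $t-\tau\ge c\,t$; for $\tau$ close to $t$ the bound degenerates to the trivial $a\,p_{t-\tau}\to a$. Your middle regime correctly restricts to $\tau\le(1-\eta')r$ and therefore genuinely uses $p_{\eta' r}$, so that step is sound. But in the bulk you apply (b) to a tree conditioned on reaching level $L$ and ask its population to stay $\gtrsim\delta_2 L^\beta$ at \emph{every} $h\in[L/2,L]$: for $h$ near $L$ there is essentially no residual survival constraint, so (b) gives nothing (a fix would be to track the trees reaching level $2L$ rather than $L$, manufacturing your own cushion). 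Likewise the $\alpha$-stable top window is handled by invoking (b) ``run from $(1-\eta')r$'' — exactly where the cushion is shortest. The finite-variance top is salvageable by the implicit CLT/concentration argument you gesture at, but the stable case as written is not closed. So: valid overall strategy, but (b) as stated is false, and two of its three applications rely on the part of (b) that fails. The paper's permutation-plus-cushion trick eliminates the regime split and this bookkeeping entirely, which is exactly what makes the published proof short.
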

\proof Fix $ \varepsilon>0$. Recall that $T_{1}, T_{2}, \dots$ are independent $\mu$-Galton--Watson trees and that $T_{\xi_{r}}$ is the first of these trees reaching height $r$. We denote by $ \mathbf{X}_{i}(h)$ the total number of vertices at height $h$ belonging to the trees $T_{1}, \dots , T_{i}$ so that $X_{\xi_{r}}(h) = N_r(0,h)$. It suffices to show that if $\delta >0$ is sufficiently small then
\[\mathbb{P}\Bigl( \min_{0 \leq h \leq r}  \mathbf{X}_{\xi_{r}}(h) \leq \delta r^{\beta}\Bigr) \leq \varepsilon\] for all $r \geq 1$.
We start with two remarks. First, observe that the law of $\xi_{r}$ is a geometric random variable with success probability $ \mathbb{P}( \mathrm{Height}(T) \geq r)$. By \eqref{eq:kolmogorov} this success probability is asymptotic to $cr^{-\beta}$ as $r\to\infty$, and it follows that if  $ \eta >0$  is sufficiently small then $ \mathbb{P}(\xi_{r} \notin [\eta r^{\beta},  \eta^{-1} r^{\beta}]) \leq \varepsilon/3$ for all $r\geq 1$. Secondly, using  \eqref{eq:kolmogorov} again, it is easy to see that we can find $ \varepsilon' >0$ such that the height of $T_{\xi_{r}}$ is at least $(1+ \varepsilon')r$ with probability at least $ 1-\varepsilon/3$. Thus, by the union bound, it suffices to prove that if $\delta>0$ is sufficiently small then 
\begin{equation}
\label{eq:lem2desired}
\mathbb{P}\left( \left\{\min_{0 \leq h \leq r}  \mathbf{X}_{\xi_{r}}(h) \leq \delta r^{\beta}  \right\} \cap \left\{ \eta r^{\beta}<\xi_{r}< \eta^{-1} r^{\beta}\right\} \cap \left\{  \xi_{r} = \xi_{(1+ \varepsilon')r} \right \}\right) \leq \varepsilon/3
\end{equation} for every $r \geq 1$. 

% Let $k \in [| \eta r^{\beta}, \eta^{-1} r^{\beta}|]$, let $\tilde T_1,\ldots,\tilde T_k$ be a sequence of independent $\mu$-Galton-Watson trees, and let $\tilde X_i(h)$ be the total number of vertices at height $h$ belonging to the trees $\tilde T_1,\ldots$
Let $k,r\geq 1$. Let $\mathcal{W}_{r,k}$ be the event that $\xi_{r}=\xi_{(1+ \varepsilon')r} = k$, and let $\mathcal{U}_{r,k}$ be the event that exactly one of the trees $T_1,\ldots,T_k$ reaches height $(1+\eps')r$ while every other tree indexed by $\{1,\ldots, k\}$ reaches height strictly less than $r$. If $\sigma$ is a uniform random permutation of $\{1,\ldots, k\}$ independent of $T_1,T_2,\ldots$, notice the following equality of conditional distributions
\begin{equation}
\label{eq:lemma2distributionalequality}
\Bigl( \bigl(T_1,\ldots,T_k\bigr)  \mid \mathcal{U}_{r,k} \Bigr) \overset{d}{=} \Bigl( \bigl(T_{\sigma(1)},\ldots,T_{\sigma(k)}\bigr)  \mid \mathcal{W}_{r,k} \Bigr).
\end{equation}
% 
% Conditionally on $\xi_{r}=\xi_{(1+ \varepsilon')r} = k \in [| \eta r^{\beta}, \eta^{-1} r^{\beta}|]$, if one applies a uniform permutation of the indices of the trees $T_{1}, \dots , T_{\xi_{r}}$ we just end up with a forest of $k$ independent $\mu$-Galton--Watson trees conditioned on the event that exactly one of the trees reaches height $(1+ \varepsilon')r$ while all the other trees reach height strictly less than $r$. Denote this event by $ \mathcal{U}_{r,k}$.
 On the other hand, if we define $\mathcal{V}_{r,k}$ to be the event that \emph{at least one} of the $k$ trees $T_{1}, \dots , T_{k}$ reaches height $(1 + \varepsilon')r$, then a little calculation using \eqref{eq:kolmogorov} shows that  there exist constants $0<c_{1}<c_{2}<1$ (depending on $ \varepsilon'$ and $\eta$) such that
 \begin{eqnarray} \label{eq:condi}0<c_{1}<\mathbb{P}( \mathcal{U}_{r,k}) \leq \mathbb{P}( \mathcal{V}_{r,k})<c_{2}<1  \end{eqnarray}
for every $r \geq 1$ and all $\eta r^\beta \leq k \leq \eta^{-1} r^\beta$. 
We deduce that there exists a constant $c_3>0$ such that
 \begin{align*}   &\mathbb{P}\biggl( \biggl\{\min_{0 \leq h \leq r}  \mathbf{X}_{\xi_{r}}(h) \leq \delta r^{\beta}  \biggr\} \cap \biggl\{ \eta r^{\beta}<\xi_{r}< \eta^{-1} r^{\beta}\biggr\} \cap \left\{  \xi_{r} = \xi_{(1+ \varepsilon')r} \right \}\biggr) \\
 &\hspace{1.6cm}
  =  \sum_{ \eta r^{\beta} < k < \eta^{-1} r^{\beta}} \mathbb{P}\left( \left\{\min_{0 \leq h \leq r}  \mathbf{X}_{k}(h) \leq \delta r^{\beta}  \right\} \cap  \mathcal{W}_{r,k}\right)\\
 &\hspace{1.6cm}\hspace{1.6cm}
\underset{ \eqref{eq:kolmogorov}}{\leq} c_3
  \sup_{\eta r^{\beta} < k < \eta^{-1} r^{\beta}} \mathbb{P}\left(\min_{0 \leq h \leq r}  \mathbf{X}_{k}(h) \leq \delta r^{\beta}   \mid  \mathcal{W}_{r,k}\right)  \\
 &\hspace{1.6cm}\hspace{1.6cm}\hspace{1.6cm}
  \underset{\eqref{eq:lemma2distributionalequality}}{=}  c_3\sup_{\eta r^{\beta} < k < \eta^{-1} r^{\beta}} \mathbb{P}\left(\min_{0 \leq h \leq r}  \mathbf{X}_{k}(h) \leq \delta r^{\beta}   \mid  \mathcal{U}_{r,k} \right) \\
 &\hspace{1.6cm}\hspace{1.6cm}\hspace{1.6cm}\hspace{1.6cm}
  \underset{ \eqref{eq:condi}}{\leq}    \frac{c_3}{c_{1}}\sup_{\eta r^{\beta} < k < \eta^{-1} r^{\beta}} \mathbb{P}\left(\left\{\min_{0 \leq h \leq r}  \mathbf{X}_{k}(h) \leq \delta r^{\beta}  \right\}  \cap  \mathcal{V}_{r,k} \right)
 \end{align*}
 for every $r\geq 1$. 
But now one can easily estimate $\mathbb{P}\left(\min_{0 \leq h \leq r}  \mathbf{X}_{k}(h) \leq \delta r^{\beta}   \cap  \mathcal{V}_{r,k} \right)$: If $0 \leq h_{0} \leq r$ is the first height at which we have $ \mathbf{X}_{k}(h_{0}) \leq \delta r^{\beta}$, then by the Markov property of the branching process, the probability that one of the descendants  of the $  \mathbf{X}_{k}(h_{0})$ points at generation $h_{0}$ reach height $(1+ \varepsilon')r$ is bounded from above by $ \delta r^{\beta} \mathbb{P}( \mathrm{ht}(T) \geq \varepsilon' r)$. Using \eqref{eq:kolmogorov} again, we can choose $\delta>0$ small enough so that this probability is less than $  c_{1} \cdot \varepsilon /3$ for all $r \geq 1$. For this choice of $\delta$ we indeed have \eqref{eq:lem2desired}. 
\qed

\subsection{The dual width}
\label{sec:dualdiam}

In order to analyze resistances, it is more convenient to have control of the width of the \emph{dual} of a block than of the block itself. 
Given $r\geq 1$ and a block $\mathcal{G}_r$, 
we define the 
% \textbf{dual block} $\mathcal{G}^\dagger_r$ to be the subgraph of the planar dual of $\mathcal{G}_r$ induced by all the faces of $\mathcal{G}_r$ other than the outside face. As with $\mathcal{G}$, we can clearly speak of vertices of $\mathcal{G}^\dagger_r$ belonging to the left and right side of $\mathcal{G}^\dagger_r$, and define the 
\textbf{dual width} of $\mathcal{G}_r$, denoted $\mathsf{Width}^\dagger(\mathcal{G}_r)$, to be length of the shortest path in the planar dual of $\mathcal{G}_r$ that starts and ends in the outside face, has its first and last edges in the left and right-hand boundaries of $\mathcal{G}_r$ respectively, and which does not visit the outside face other than at its endpoints. We call such a path a \textbf{dual left-right crossing} of $\mathcal{G}_r$. 

We claim that the dual width of $ \mathcal{G}_{r}$ is equal to the maximal size of a set of edge-disjoint (primal) paths from the bottom to the top of $\mathcal{G}_r$ (we call such a path a \textbf{primal bottom-top crossing}), whence its close connection to resistances. One such maximal set of primal bottom-top crossings can be found algorithmically by first taking the left-most primal bottom-top crossing, then the left-most primal bottom-top crossing that is edge-disjoint from the first one, and so on. The claim can be proved using the cut-cycle duality \cite[Theorem 14.3.1]{MR1829620} and Menger's theorem, but can also easily be  checked in our situation, see Figure~\ref{fig:dualblock} below.

%It follows from cut-cycle duality \cite[Theorem 14.3.1]{MR1829620} that every set of primal edges separating the top of $\mathcal{G}_r$ from the bottom of $\mathcal{G}_r$ \tom{(in the sense that any path from the top to the bottom must include an edge of this set)} must contain the set of primal edges that are dual to some dual left-right crossing of $\mathcal{G}_r$. Thus, by Menger's Theorem, the dual width of $\mathcal{G}_r$ is equal to the maximal size of a set of edge-disjoint (primal) paths from the bottom to the top of $\mathcal{G}_r$ (we call such a path a \textbf{primal bottom-top crossing}), whence its close connection to resistances. One such maximal set of primal bottom-top crossings can be found algorithmically by first taking the left-most primal bottom-top crossing, then the left-most primal bottom-top crossing that is edge-disjoint from the first one, and so on.

\vspace{1em}
\begin{figure}[!h]
 \begin{center}
\hspace{2em} \includegraphics[width=9.75cm]{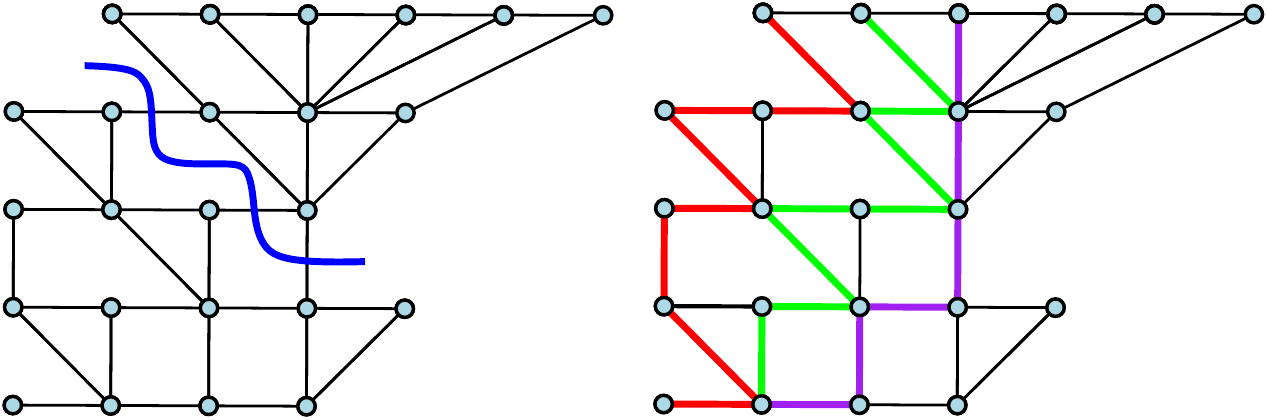}
 \caption{ 
 \small{\label{fig:dualblock} A block of height four that has width $2$ and dual width $3$. On the left is a dual left-right crossing of length three, on the right is a set of three edge-disjoint primal paths from the bottom to the top of the block. (Note that in general these paths might not have increasing heights as they do in this example.)}}
 \end{center}
 \vspace{-1em}
 \end{figure}

% the graph distance in $\mathcal{G}^\dagger_r$ between its left and right sides. In particular, we define $\Diam$
\noindent
For each $r\geq 1$, we define $g(r)$ to be the median dual width of $\mathcal{G}_r$, that is, the largest number such that \[ \mathbb{P}\big( \mathsf{Width}^\dagger( \mathcal{G}_{r}) \geq g(r)\big) \geq 1/2.\]
The proof of Theorem \ref{prop:renorm} goes through essentially unchanged to yield that there exists a constant $c>0$ such that
% \begin{proposition} \label{prop:renormdual} 
\begin{align*} g(r)  \geq c \cdot  \min \Big\{ m ;   (r/m)^{\beta} g(m) \Big\}
\qquad \forall 1 \leq m \leq c\cdot r,
\end{align*}
from which we obtain as before the following analogue of Theorem \ref{thm:main}.
% \end{proposition}

\begin{theorem}[$g(r)$ is almost linear] \label{thm:maindual} If $\mu$ is critical and satisfies \eqref{eq:defstable} then there exists $c >0$ such that for all $r$ large enough we have  $$g(r) \geq c \,r.$$
On the other hand, if $\mu$ is critical and has finite non-zero variance then there exists $C>0$ such that for all $r$ large enough we have
$$g(r) \geq r \exp( - C \sqrt{ \log r}).$$
\end{theorem}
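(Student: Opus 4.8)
The plan is to mirror exactly the structure used for $f(r)$: first establish the renormalisation inequality
$$ g(r) \geq c \cdot \min\bigl\{ m ;\, (r/m)^{\beta} g(m)\bigr\} \qquad \text{for all } 1 \leq m \leq c\cdot r, $$
and then feed this recursion into the same elementary bootstrapping argument (over scales $r=k^n$ in the stable case and $r=k^{n^2}$ in the finite variance case) already carried out in the proof of Theorem \ref{thm:main}. The only genuinely new input is the renormalisation inequality for the dual width, so I would concentrate on that.

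For the renormalisation step, the key observation is that a dual left-right crossing of $\mathcal{G}_r$ is the dual analogue of a primal left-right crossing, and it decomposes along the same layered structure. Recall (from the claim proved around Figure \ref{fig:dualblock}) that $\mathsf{Width}^\dagger(\mathcal{G}_r)$ equals the maximal number of edge-disjoint primal bottom-top crossings of $\mathcal{G}_r$. I would argue as follows. Partition $\mathcal{G}_r$ into the $(r/m)$ horizontal layers of thickness $m$ between heights $\ell m$ and $(\ell+1)m$ for $0 \leq \ell \leq (r/m)-1$ (to match the cleaner version of Proposition \ref{prop:renorm} I would again allow $2m$-thick overlapping layers and assume $m \mid r$, deferring the messier general case). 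Any primal bottom-top crossing of $\mathcal{G}_r$ restricts, within each layer, to a primal bottom-top crossing of one of the subblocks $\mathcal{G}_{m}(i,\ell m)$ sitting in that layer; conversely, a family of edge-disjoint bottom-top crossings, one in each of a vertically aligned stack of subblocks, can be concatenated into a bottom-top crossing of $\mathcal{G}_r$, and edge-disjoint such stacks yield edge-disjoint crossings of $\mathcal{G}_r$. Thus
$$ \mathsf{Width}^\dagger(\mathcal{G}_r) \geq \min_{0\leq \ell \leq (r/m)-1} \sum_{i=1}^{N_r(m,\ell m)} \mathsf{Width}^\dagger\bigl(\mathcal{G}_m(i,\ell m)\bigr), $$
where $N_r(m,\ell m)$ is the same counting quantity as before (the number of subblocks of height $m$ in the $\ell$-th layer belonging to $\mathcal{G}_r$). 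I should double-check that the dual-width lower bound really is monotone under this concatenation — intuitively stacking more subblocks can only make the dual width larger, and at worst I lose a bounded factor from boundary effects at the layer interfaces, which I can absorb into the constant $c$ exactly as the ``$m/3$'' term was handled in Proposition \ref{prop:renorm}. From here the proof is identical to that of Proposition \ref{prop:renorm}: use the definition of $g(m)$ together with the Chernoff bound \eqref{eq:ASChernoff} to see that a constant fraction of the $N_r(m,\ell m)$ subblocks in each stack have dual width at least $g(m)$, invoke Lemma \ref{lem:manymany} (which bounds $N_r(m,\ell m) \gtrsim (r/m)^\beta$ with high probability and is a statement purely about the tree structure, hence applies verbatim), and take a union bound over the $(r/m)$ values of $\ell$ to conclude $g(r) \geq c\min\{m, (r/m)^\beta g(m)\}$.

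Given the renormalisation inequality, the passage to Theorem \ref{thm:maindual} is purely analytic and word-for-word the argument in the proof of Theorem \ref{thm:main}: in the stable case ($\beta>1$) set $a_n = g(k^n)/k^n$ for $k$ larger than $c^{-1}$ and $c^{-1/(\beta-1)}$ and check $a_{n+1}\geq \min\{c/k, a_n\}$, so $\liminf a_n>0$; in the finite variance case ($\beta=1$) set $b_n = g(k^{n^2})/k^{(n-1)^2}$ and check $b_{n+1}\geq \min\{c, b_n\}$, so $\liminf b_n>0$; then fill in general $r$ by applying the inequality with $m=k^{\lfloor \log_k r-1\rfloor}$, resp. $m=k^{\lfloor\sqrt{\log_k r}-1\rfloor^2}$. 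One also needs $g(r)>0$ for all large $r$, which follows since $\mathsf{Width}^\dagger(\mathcal{G}_r)\geq 1$ as soon as $\mathcal{G}_r$ contains a primal bottom-top crossing (always true), in fact one can get $g(r)\geq \mathsf{Width}(\mathcal{G}_r)$-type comparisons or just invoke Lemma \ref{lem:CSBP} as in the $f$ case.

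The main obstacle is the decomposition/concatenation claim for the dual width across layers: I must verify that edge-disjoint primal bottom-top crossings in vertically stacked subblocks glue to edge-disjoint primal bottom-top crossings of the big block without losing more than a constant factor, and handle the edges lying exactly on the interface lines between layers. This is the dual counterpart of the ``the path must cross every subblock from left to right'' reasoning in \eqref{eq:logic}, and it is where the overlapping-$2m$-layers trick earns its keep, exactly as in Proposition \ref{prop:renorm}; everything downstream is a routine transcription of arguments already in the paper.
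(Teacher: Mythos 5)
Your high-level plan is exactly the paper's: obtain the recursion $g(r) \geq c\min\bigl\{m,\, (r/m)^{\beta} g(m)\bigr\}$ for $1 \leq m \leq c\cdot r$ and then feed it into the same elementary bootstrap used to deduce Theorem \ref{thm:main} from Proposition \ref{prop:renorm}; the analytic half really is a verbatim transcription. The displayed recursion (once you reinstate the $m/3$ term and switch to the overlapping $2m$-thick layers) is also the correct one. The problem is your justification of it via the equivalence of $\mathsf{Width}^\dagger$ with the maximal number of edge-disjoint primal bottom-top crossings.

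Two of the claims you make along the way do not hold. First, ``any primal bottom-top crossing of $\mathcal{G}_r$ restricts, within each layer, to a primal bottom-top crossing of one of the subblocks $\mathcal{G}_m(i,\ell m)$'' is false: consecutive subblocks in a layer are joined by horizontal edges at every height inside that layer, so a single crossing can move laterally between several subblocks while traversing a layer. Second, the converse concatenation step is genuinely delicate: the block decompositions of consecutive layers are not vertically aligned (each layer starts a fresh grouping of roots), so concatenating $k$ edge-disjoint paths that exit layer $\ell$ at some set of $k$ vertices of the interface line with $k$ edge-disjoint paths that enter layer $\ell+1$ at a different set of $k$ vertices requires routing through horizontal interface edges, and nothing guarantees this routing can be done edge-disjointly without a loss that grows with $k$. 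This is not the same issue the $m/3$ slack addresses in Proposition \ref{prop:renorm} (that handles paths that leave the layer entirely), so ``absorb into the constant as before'' does not apply.

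The argument the paper has in mind sidesteps all of this by never passing to the max-flow characterization. Work directly with the defining object: a dual left-right crossing is a single path in the planar dual from the outer face (through the left boundary) back to the outer face (through the right boundary). Exactly as for the primal path in \eqref{eq:logic}, if such a dual crossing stays inside the strip of heights $[\ell m,(\ell+2)m]$ it must, travelling left to right, traverse every subblock $\mathcal{G}_{2m}(i,\ell m)$ in that strip, contributing at least $\mathsf{Width}^\dagger\bigl(\mathcal{G}_{2m}(i,\ell m)\bigr)$ to its length; and if it leaves the strip its length is at least $m/3$, because each inner face sits between two consecutive levels and dual-adjacent faces lie between levels differing by at most one. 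That gives the dual analogue of \eqref{eq:logic}, after which the Chernoff bound \eqref{eq:ASChernoff}, Lemma \ref{lem:manymany}, and the union bound carry over unchanged --- this is what ``the proof of Proposition \ref{prop:renorm} goes through essentially unchanged'' means. The edge-disjoint-paths characterization is used in the paper only later, to relate $\mathsf{Width}^\dagger$ to resistance, not to prove the recursion.
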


\subsection{The subadditive argument} \label{sec:subadditive}
In this section we suppose that $\mu$ is critical and has finite variance. We use the same notation as in the preceding section. 
 In particular, we let $\mathcal{Q_\infty}$ be the quarter-plane model, as defined in Section \ref{sec:blocks}, and recall that $\mathcal{Q}_\infty$ is indexed by $\{1,2,\ldots\}\times \{0,1,\ldots\}$. For each $m \geq n\geq 1$ let $\mathcal{Q}_{n,m}$ be the subgraph of $\mathcal{Q}_\infty$ induced by the trees $T_n,\ldots, T_m$, and let $L_{n,m}$ be the graph distance between $(n,0)$ and $(m,0)$ in $\mathcal{Q}_{n,m}$.
Our aim is to prove the following. %In particular we  distance on the set $ \mathbb{N}\times \mathbb{N}$ induced by the infinite sequence of trees $T_{1}, T_{2}, \dots$ together with the horizontal lines, i.e., in the quarter-plane model. We prove the following.
\begin{proposition} \label{prop:subadditive}If $\mu$ is critical and has finite non-zero variance then 
$\lim_{n\to\infty}\frac{1}{n}L_{1,n}= 0$ a.s.
\end{proposition}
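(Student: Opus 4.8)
The plan is to apply Kingman's subadditive ergodic theorem to the family $(L_{n,m})$, deduce that $L_{1,n}/n$ converges almost surely (and in $L^1$) to some constant $\gamma \geq 0$, and then show $\gamma = 0$ by exhibiting a sublinear upper bound along a subsequence coming from the block structure.

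First I would check the hypotheses of the subadditive ergodic theorem. Subadditivity $L_{1,n} \leq L_{1,m} + L_{m,n}$ for $1 \leq m \leq n$ holds because a path from $(1,0)$ to $(m,0)$ in $\mathcal{Q}_{1,m}$ concatenated with a path from $(m,0)$ to $(n,0)$ in $\mathcal{Q}_{m,n}$ is a path from $(1,0)$ to $(n,0)$ in $\mathcal{Q}_{1,n}$. Stationarity follows from the fact that the trees $T_1,T_2,\ldots$ are i.i.d., so that $(L_{n+1,m+1})\overset{d}{=}(L_{n,m})$, and the shift acting on the i.i.d. sequence is ergodic, so the limit constant $\gamma$ is deterministic. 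The integrability condition $\mathbb{E}[L_{1,n}] < \infty$ is immediate since $L_{1,n}$ is bounded (e.g. by going up to a common ancestor level and back down, or even more crudely $L_{1,n} \leq $ some a.s.-finite random variable with finite mean; one can also just note $L_{1,2} < \infty$ a.s. and $\mathbb{E}[L_{1,2}]<\infty$ because two adjacent trees almost surely have a finite-height common structure allowing a path — more simply, $L_{1,2}$ is at most twice the maximum of two independent copies of $\mathsf{Height}(T)$ reaching... actually even simpler: $L_{1,2} \leq 2\min(\mathsf{Height}(T_1),\mathsf{Height}(T_2)) + \text{(horizontal crossings)}$, and all moments of $\mathsf{Height}(T)$ are finite in the finite-variance case by \eqref{eq:kolmogorov}). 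So Kingman applies and gives $\frac{1}{n}L_{1,n} \to \gamma$ a.s. and in $L^1$ for a constant $\gamma \in [0,\infty)$.

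It remains to prove $\gamma = 0$. The idea is to bound $L_{1,n}$ from above along a well-chosen subsequence. Fix $r$ large. Split the first $N$ trees into consecutive blocks of height $r$: the first block $\mathcal{G}_r$ uses trees $T_1,\ldots,T_{\xi_r}$, the next uses the following trees up to the next one reaching height $r$, and so on. To cross from the left side of a block $\mathcal{G}_r$ to its right side within the layer of height $\leq r$ costs at most $\mathsf{Width}(\mathcal{G}_r) \leq 2r$ steps, but more relevantly, one can cross the \emph{entire} block $\mathcal{G}_r$ horizontally along level $r$ (all its top vertices share a common ancestor): going up to level $r$ in the first tree, across level $r$, and back down, costs at most... this uses $\approx 2r$ vertical steps plus the number of vertices at level $r$ of the block. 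The cleanest route: by Theorem \ref{thm:main} (finite variance case) the median width satisfies $f(r) \geq r e^{-C\sqrt{\log r}}$, and by Theorem \ref{thm:maindual} the same for $g(r)$; but for the \emph{upper} bound on $L_{1,n}$ we instead observe that within a single block of height $r$ the graph distance between the two endpoints on level zero is at most $2r + (\text{number of level-}r\text{ vertices of the block})$, since we can climb up tree $T_1$ to level $r$, walk across all level-$r$ vertices of the block (they all descend from the root of $T_1$, so they are consecutive), and descend the last tree. Summing over the first $K$ blocks gives $L_{1,n} \leq \sum_{j=1}^{K}\big(2r + \#\{\text{level-}r\text{ vertices of }j\text{th block}\}\big)$ where $n$ is the total number of trees in these $K$ blocks. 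The second term telescopes badly, so instead I would bound the crossing of each block by going up only to some level $h \ll r$ and walking along level $h$; the number of level-$h$ vertices in a block of height $r$ is, by Yaglom-type estimates \eqref{eq:yaglom} and the construction, typically $O(r \cdot h / r) = O(h)$... this requires care.

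The main obstacle — and the step I would spend the most effort on — is producing the sublinear upper bound cleanly. The honest approach, matching the paper's likely intent, is: let $a_n = \mathbb{E}[L_{1,n}]/n$; show by a renormalisation/block argument that one can cross $n$ trees using the horizontal connectivity at a small height $m$ together with the bound $f(m)$ (or rather an \emph{upper} analogue), yielding a recursion of the form $\mathbb{E}[L_{1,n}] \lesssim (n/N) \cdot (\text{cost to cross a block of } N \text{ trees at height } \approx \sqrt{\log})$, and iterate; but since $\gamma$ is already known to be a constant, it suffices to show $\liminf_n \mathbb{E}[L_{1,n}]/n = 0$, i.e. to find arbitrarily large $n$ with $\mathbb{E}[L_{1,n}] = o(n)$. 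For this, pick $n = n(r)$ to be a typical number of trees forming one block of height $r$ — namely $n \approx r^{\beta}=r$ in the finite-variance case — and bound $L_{1,n}$ by the cost of one vertical excursion plus one horizontal traversal of a single block of height $r$, which by the width/dual-width controls of Theorem \ref{thm:main} and standard Galton–Watson layer-size estimates is $O(r) = O(n)$... that only gives $\gamma \leq C$, not $\gamma = 0$. To truly get $\gamma=0$ one must iterate: crossing $n$ trees by traversing $\sqrt n$ blocks each of "width-cost" $o(\sqrt n)$, so $L_{1,n} \lesssim \sqrt{n}\cdot o(\sqrt n) = o(n)$, using at each level that a block of height $m$ can be crossed in $\ll m$ steps with positive probability (Theorem \ref{thm:main}: width $\geq f(m)$ with probability $\geq 1/2$, but we want the complementary \emph{cheap} crossing). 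Concretely: define $\phi(n) = \mathbb{E}[L_{1,n}]$; show $\phi(n) \leq C\big(\tfrac{n}{m} \phi(m) \cdot (\text{something} \ll 1) + \tfrac{n}{m}\cdot m\big)$ via a block decomposition at scale $m$, balance, and conclude $\phi(n)/n \to 0$. I expect assembling this recursion rigorously — tracking the probability that a block admits a crossing of length $\leq \epsilon r$, which needs an \emph{upper} bound on crossing length and hence a dual statement to Proposition \ref{prop:renorm} — to be the crux; everything else (Kingman, subadditivity, stationarity, integrability) is routine.

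\medskip

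\noindent\emph{Summary of the plan.}
\begin{enumerate}
\item Verify subadditivity, stationarity and integrability of $(L_{n,m})$; apply Kingman's subadditive ergodic theorem to get $\frac1n L_{1,n}\to\gamma$ a.s.\ and in $L^1$, with $\gamma\in[0,\infty)$ deterministic.
\item Reduce $\gamma=0$ to showing $\liminf_n \frac1n\mathbb{E}[L_{1,n}]=0$.
\item By a block decomposition at scale $m$ together with upper bounds on the cost of crossing a single block of height $m$ (the dual/complementary side of the estimates behind Theorem \ref{thm:main}), derive a self-improving recursion for $\mathbb{E}[L_{1,n}]/n$ and conclude it tends to $0$.
\end{enumerate}
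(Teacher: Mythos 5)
Your Step 1 (Kingman's theorem) matches the paper's first step, though the verification of integrability is much simpler than you make it out to be: there is a horizontal edge between $(i,0)$ and $(i+1,0)$ for every $i$, so $L_{1,n}\leq n-1$ deterministically and the limiting constant lies in $[0,1]$. The genuine gap is in your plan for showing the constant vanishes. You propose a block-renormalisation recursion that needs an \emph{upper} bound on the cost of crossing a block---what you yourself call "a dual statement to Proposition~\ref{prop:renorm}"---but Proposition~\ref{prop:renorm} and Theorem~\ref{thm:main} are \emph{lower} bounds on width: they say blocks are hard to cross, which pushes in exactly the wrong direction. You acknowledge this, and the recursion you gesture at is never closed; as you note, the version you can actually write down only gives $\gamma\leq C$, not $\gamma=0$. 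Without a new lemma controlling cheap crossings (which the paper never proves, because it never needs it), this route does not finish.

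The paper's argument for $c=0$ is entirely different and much simpler. It writes down one deterministic shortcut: with $\xi_r^{(1)}<\xi_r^{(2)}$ the indices of the first two trees reaching height $r$, the path that goes horizontally to $(\xi_r^{(1)},0)$, climbs $T_{\xi_r^{(1)}}$ to level $r$, takes one horizontal step, and descends to $(\xi_r^{(2)},0)$ shows $L_{1,\xi_r^{(2)}}\leq \xi_r^{(1)}+2r$. By \eqref{eq:kolmogorov}, $r^{-1}\xi_r^{(1)}$ and $r^{-1}(\xi_r^{(2)}-\xi_r^{(1)})$ converge to independent exponentials, so for each $\varepsilon>0$ one has $\liminf_r\P(\xi_r^{(1)}+2r\leq\varepsilon\,\xi_r^{(2)})>0$. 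Since $L_{1,n}/n\to c$ a.s.\ with $c$ deterministic, and $\xi_r^{(2)}\to\infty$ a.s., this positive-probability event forces $c\leq\varepsilon$, hence $c=0$. The point is that one exploits the \emph{fluctuations} of $\xi_r^{(2)}$: with positive probability the second tall tree is far away relative to $\xi_r^{(1)}+2r$, so going over the top of $T_{\xi_r^{(1)}}$ skips an arbitrarily large number of trees at linear-in-$r$ cost. That probabilistic observation, not renormalisation, is the idea your proposal is missing.
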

\proof The proof is based on a simple observation together with Kingman's subadditive ergodic theorem. 
 We clearly have that the random array $(L_{n,m})_{m \geq n \geq 1}$ is stationary in the sense that $(L_{n+k,m+k})_{m\geq n \geq 1}$ has the same distribution as $(L_{n,m})_{m\geq n \geq 1}$ for every $m\geq n \geq 1$ and $k\geq 1$, and is subadditive in the sense that $L_{n,m+k} \leq L_{n,m} + L_{m,m+k}$ for every $n,m,k \geq 1$. Moreover,
 since the $T_{i}$'s are i.i.d., the stationary sequence $((L_{n+k,m+k})_{m\geq n \geq 1})_{k\geq 0}$ is ergodic and we can apply Kingman's subadditive  ergodic theorem to deduce that   \begin{eqnarray} \label{eq:kingman} n^{-1}L_{1,n} \xrightarrow[n\to\infty]{a.s.} c,  \end{eqnarray} for some non-random constant $c\in [0,1]$. 

To finish the proof and show that $c=0$, we use the following observation. Recall that for $r \geq1$, we denoted by $\xi_{r}=:\xi_{r}^{(1)}$ the index of  the first tree among $T_{1}, T_{2}, \dots$ that reaches height $r$. We also denote by $\xi^{{(2)}}_{r}$ the index of the second such tree. Considering the path that starts at $(1,0)$, travels horizontally to $(\xi_{r}^{(1)},0)$, travels vertically up to the right-most element of $T_{\xi_r^{(1)}}$ in level $r$, takes one step to the right, and then travels vertically downwards to $(\xi_r^{(2)},0)$, as illustrated in Figure \ref{fig:shortcut}, yields the bound
 \begin{eqnarray} L_{1,\xi^{(2)}_{r}} \leq \xi_{r}^{(1)}+2r.   \label{eq:shortcut}\end{eqnarray}
 Using \eqref{eq:kolmogorov}, it is easy to show that $ r^{-1}\xi_{r}^{(1)}$ and $r^{-1} (\xi_{r}^{(2)}-\xi_{r}^{(1)})$ converge in distribution towards a pair of independent exponential random variables with the same parameter. In particular, it follows that
 \[
\liminf_{r\to\infty} \P\bigl( \xi_{r}^{(1)}+2r \leq  \varepsilon \cdot \xi_{r}^{(2)} \bigr)>0
 \]
 for every $ \varepsilon>0$. 
  % for every $A>0$, we  have that $(\xi_{r}^{(2)}-\xi_{r}^{(1)}) \geq Ar$ with a probability depending on $A$ but bounded away from $0$ as $r \to \infty$.
   This observation together with the a.s.\ convergence \eqref{eq:kingman} and the bound \eqref{eq:shortcut} yields that $c \leq \varepsilon$. Since this inequality is valid for every $ \varepsilon>0$ we must have that $c=0$.   \endproof

\begin{figure}[t]
 \begin{center}
 \includegraphics[width=8cm]{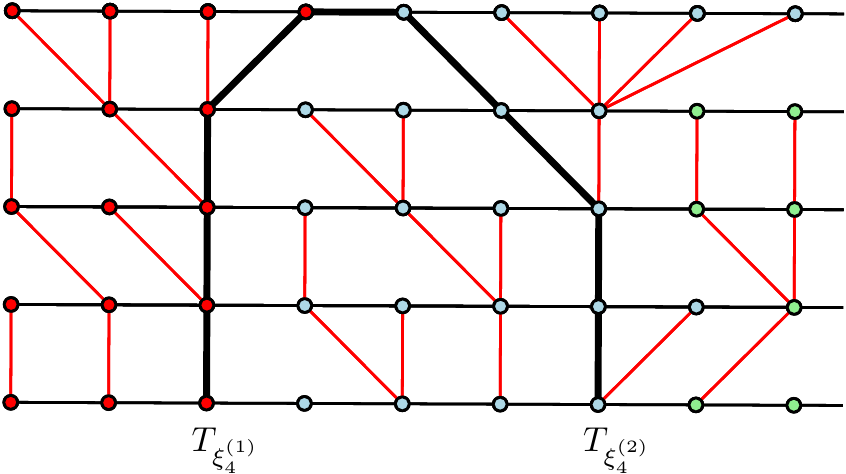}
 \caption{Illustration of the bound \eqref{eq:shortcut} \label{fig:shortcut}.}
 \end{center}
 \vspace{-0.5em}
 \end{figure}

 \section{Estimating the girth}
 \label{sec:width}
 In this section we will derive our Theorems \ref{thm:geometry} and \ref{thm:geometrystable} from the estimates on the geometry of blocks derived in the last section. 
  In order to do this, we relate the $\mu$-Galton--Watson tree conditioned to survive $T_\infty$ (and the graph $\Cinfty= \mathsf{Causal}( T_{\infty})$ obtained by adding the horizontal edges to $T_{\infty}$ as in Figure~\ref{fig:causal-arbre}) to the unconditioned quarter-plane model made of i.i.d.~$\mu$-Galton--Watson trees that we considered in Section \ref{sec:halfplane}. The main ingredient is the standard representation of $T_{\infty}$  using the spine decomposition \cite{LPP95b}, which we now review. We refer to \cite{AD13} for precise statements and proofs regarding this decomposition.\medskip

The plane tree $T_{\infty}$ has a unique spine (an infinite line of descent) which can be seen as the genealogy of a mutant particle which reproduces according to the biased distribution $\overline{\mu} =( k \cdot \mu_{k})_{k \geq 0}$ and from which exactly one of its offspring is chosen at random and declared mutant. All other particles reproduce according to the underlying offspring distribution $\mu$, see Figure \ref{fig:subforest} (left) and \cite{AD13} for more details. We deduce from this representation that, for every $n_{0} \geq 1$, conditionally on $\# \partial [T_{\infty}]_{n_{0}}$, if at generation $n_{0}$ we erase the only mutant particle and all its offspring then we obtain a forest of $(\# \partial[T_{\infty}]_{n_{0}}-1)$ independent $\mu$-Galton--Watson trees. We order the trees in this forest using the plane ordering of $T_{\infty}$ so that the first tree is the one immediately to the right of the spine and the last tree is the one immediately to the left of the spine. Denote this forest by $ \mathcal{F}_{n_{0}}$ and note that it can be empty.
%ordered in the ordered as they appear in the level beginning with the tree immediately to the right of the spine, which we denote by $ \mathcal{F}_{n_{0}}$. 
We add the horizontal connections between inner vertices of $\mathcal{F}_{n_0}$ (except those linking the extreme vertices of a line) to get the graph $ \mathcal{C}_{n_{0}}$ which is then a subgraph of $ \Cinfty$. The graph $ \mathcal{C}_{n_{0}}$ truncated at height $k$ will be denoted by $[ \mathcal{C}_{n_{0}}]_{k}$. See Figure \ref{fig:subforest}.

%From $ \mathcal{F}_{n_{0}}$ we can decide to add the horizontal connections between inner vertices

\begin{figure}[!h]
 \begin{center}
 % \hspace{-0.1\textwidth}
 \includegraphics[width=14cm]{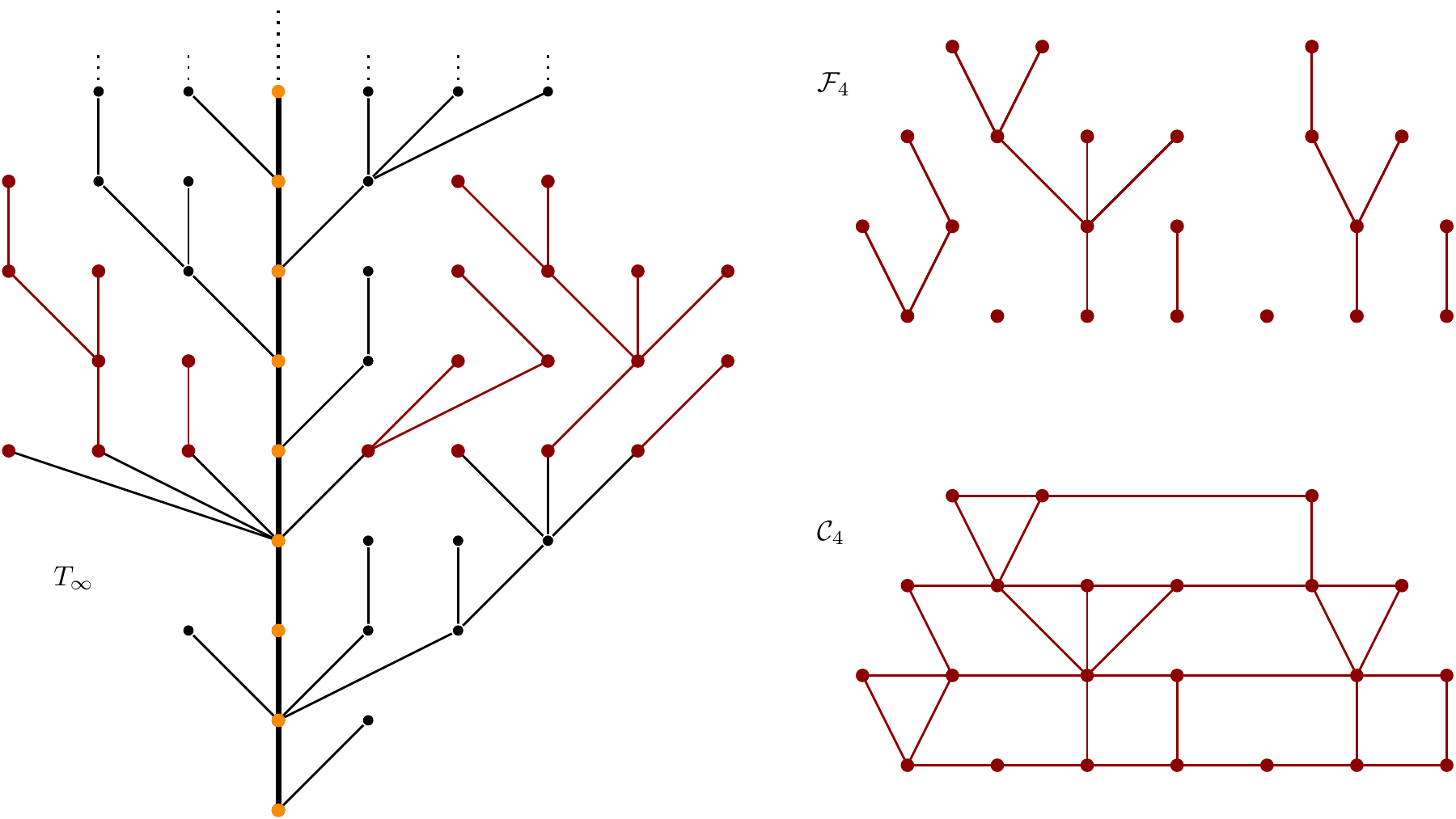}
 \caption{
 \small{Left: A piece of $T_{\infty}$ where the ancestral line of the mutant particles is highlighted. Right: the sub forest $ \mathcal{F}_{4}$ and its causal version $ \mathcal{C}_4$ obtained by adding the horizontal connections.  \label{fig:subforest}}
 }
 \end{center}
 \vspace{-2em}
 \end{figure}

 It is also standard that $T_{\infty}$ is the martingale biasing of the random variable $T$  by the non-negative martingale $( \# \partial [T]_{n})_{n \geq 0}$. That is, for every positive  function $f$ on the set of finite plane trees we have 
 $$ \mathbb{E}[f([T_{\infty}]_{n})] = \mathbb{E}[f([T]_{n}) \# \partial [T]_{n}]$$
 for every $n\geq 0$. In particular the size of the $n$-th generation $\# \partial [T_{\infty}]_{n}$ has the law of $\# \partial [T]_{n}$ biased by itself. It is also standard (see \cite[Theorem 4]{Pak76}) that $\# \partial [T_{\infty}]_{n}$ is of order $n^{\beta}$ (recall the definition of $\beta$ at the end of the Introduction) and more precisely that once rescaled by $n^{-\beta}$ it converges in distribution towards a positive random variable: \begin{eqnarray} n^{-\beta}\# \partial [T_{\infty}]_{n} \quad \xrightarrow[n\to\infty]{d} \quad \mathcal{X}'>0. \label{eq:yaglombis}  \end{eqnarray}
 Here again, the precise distribution of the random variable $ \mathcal{X}'$ will not be used. We shall however use a version of this estimate which is rougher for a given $n$ but holds simultaneously for all $n \geq 1$:
   \begin{lemma} \label{lem:lil} There is some positive contant $0<C<\infty$ such that  almost surely, for all $n$ large enough
   $$  n^\beta \big(\log n \big)^{-C} \leq \#\partial [T_{\infty}]_{n} \leq n^\beta \big(\log n \big)^{C}. $$
% There exists a constant $C$ such that $n^{\beta}(\log n)^{-C} \leq \displaystyle \# \partial [T_{\infty}]_{n} \leq n^{\beta}(\log n)^{C}$ almost surely for all $n$ sufficiently large.
 \end{lemma}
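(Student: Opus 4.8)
The plan is to deduce the two--sided bound for all large $n$ from uniform--in--$n$ tail estimates for a single generation size, glued to all of $\mathbb{N}$ by Borel--Cantelli along the geometric subsequence $n=2^k$ together with maximal and oscillation inequalities. The only genuinely delicate point will be filling in the lower bound for $n$ strictly between consecutive powers of $2$.

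First I would record the one--generation estimates. Since $T_\infty$ is $T$ biased by the non-negative martingale $\#\partial[T]_n$, we have $\E[\varphi(\#\partial[T_\infty]_n)]=\E[\varphi(\#\partial[T]_n)\,\#\partial[T]_n]$ for every bounded $\varphi$. Taking $\varphi=\mathbf{1}\{\,\cdot\le x\}$ and combining the Kolmogorov/Slack estimate \eqref{eq:kolmogorov} with the quantitative Yaglom limit \eqref{eq:yaglom} gives $\P(\#\partial[T_\infty]_n\le\epsilon n^\beta)\le \epsilon n^\beta\,\P(0<\#\partial[T]_n\le\epsilon n^\beta)\le C\epsilon^{a}$ for a fixed $a>0$, uniformly in $n$ (in the finite variance case one may take $a=2$, the conditioned limit being exponential). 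For the upper tail, the same identity with $\varphi(x)=x^{p-1}$ converts a Galton--Watson moment bound $\E[(\#\partial[T]_n)^{1+p}]\le c_p n^{\beta p}$ — which holds with $p=1$ in the finite variance case and with any $p\in(0,\alpha-1)$ in the $\alpha$-stable case, by the usual induction on $n$ — into $\E[(\#\partial[T_\infty]_n)^p]\le c_p n^{\beta p}$, whence $\P(\#\partial[T_\infty]_n\ge \epsilon^{-1}n^\beta)\le c_p\epsilon^{p}$ by Markov, again uniformly in $n$. Thus there is a fixed $a>0$ with $\P\bigl(\#\partial[T_\infty]_n\notin[\epsilon n^\beta,\epsilon^{-1}n^\beta]\bigr)\le C\epsilon^a$ for all $n$ and all $\epsilon\in(0,1)$.

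Applying this with $\epsilon=k^{-C}$, the series $\sum_k\P\bigl(\#\partial[T_\infty]_{2^k}\notin[2^{k\beta}k^{-C},2^{k\beta}k^{C}]\bigr)$ converges once $C$ is large, so by Borel--Cantelli both bounds hold along $n=2^k$ for all large $k$. To pass to $n\in[2^k,2^{k+1})$ I would, for the \emph{upper} bound, use that in the finite variance case $m\mapsto\#\partial[T_\infty]_m$ is a submartingale — by the spine decomposition \cite{LPP95b,AD13}, $\E[\#\partial[T_\infty]_{m+1}\mid[T_\infty]_m]=\#\partial[T_\infty]_m+\sigma^2$ — so Doob's maximal inequality bounds $\P(\max_{m\le 2^{k+1}}\#\partial[T_\infty]_m\ge 2^{k}k^{C+2})$ by $\E[\#\partial[T_\infty]_{2^{k+1}}]/(2^{k}k^{C+2})\lesssim k^{-C-2}$, which is summable; hence $\#\partial[T_\infty]_n\le n(\log n)^{C'}$ for all large $n$. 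In the $\alpha$-stable case $\#\partial[T_\infty]_m$ is not even integrable, but $\max_{m\in[2^k,2^{k+1})}\#\partial[T_\infty]_m$ is dominated by the analogous quantity for the fresh copy of $T_\infty$ given by the spine continuation rooted at level $2^k$ (handled recursively) plus $\sum_u\sup_l Z^{(u)}_l$ over the independent Galton--Watson subtrees rooted at the level-$2^k$ vertices; since each $\sup_l Z^{(u)}_l$ has tail $\le 1/x$ by Doob applied to the non-negative martingale $Z^{(u)}_l$, a one-big-jump union bound finishes this case.

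For the \emph{lower} bound, the level-$n$ population for $n\in[2^k,2^{k+1})$ dominates $\Sigma_{n-2^k}$, where $\Sigma_l=\sum_u Z^{(u)}_l$ is the sum of the generation-$l$ sizes of the ordinary independent Galton--Watson subtrees rooted at the non-spine vertices of level $2^k$; $\Sigma$ is a non-negative martingale started from $\#\partial[T_\infty]_{2^k}-1$, and one needs $\inf_{l<2^k}\Sigma_l$ to stay not much below its starting value. This is the one nonroutine step: a naive union bound over $l$ is far too lossy, since after $\sim 2^k$ generations of a critical process of size $\sim 2^{k\beta}$ the relative fluctuations are of order one, so one must use that $0$ is an entrance boundary for the rescaled process. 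Concretely, $l\mapsto\#\partial[T_\infty]_l$ rescaled by $l^{-\beta}$ converges \eqref{eq:yaglombis} to the continuous--state branching process with immigration obtained by size-biasing the corresponding CSBP — in the finite variance case this is, after an affine change of variable, a squared Bessel process of dimension $4$ — and for such a process the explicit scale function gives $\P(\inf_{t\ge 0}Y_t\le\epsilon x\mid Y_0=x)\le C\epsilon$. Transferring this to the discrete setting (or, equivalently, running the corresponding supermartingale argument directly with the generating functions of the branching process with immigration) yields $\P(\inf_{l<2^k}\#\partial[T_\infty]_{2^k+l}\le\epsilon\,2^{k\beta})\le C\epsilon^{a'}$ for some fixed $a'>0$, uniformly in $k$; Borel--Cantelli with $\epsilon=k^{-C}$ then gives $\inf_{2^k\le n<2^{k+1}}\#\partial[T_\infty]_n\ge 2^{k\beta}k^{-C}$ for all large $k$. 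Taking $C$ large at the outset makes every Borel--Cantelli series converge, and absorbing the bounded factors relating $2^k$ to $n$ and $k$ to $\log n$ into the exponent produces a single constant $C$ for which both inequalities of the lemma hold for all large $n$. I expect the running-minimum estimate of the last paragraph to be the main obstacle; everything else is standard Galton--Watson input combined with Doob's inequality.
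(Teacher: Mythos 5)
Your overall route matches the paper's: uniform two-sided tail bounds for a single generation (the paper cites \cite[Propositions 2.2 and 2.6]{CK08} for these rather than rederiving them from \eqref{eq:kolmogorov}--\eqref{eq:yaglom} as you do), Borel--Cantelli along $n=2^k$, and then interpolation to all $n$ via maximal-type estimates for the size-biased chain. You also correctly single out the running-minimum estimate as the only delicate step. For the upper bound, your finite-variance argument --- $\#\partial[T_\infty]_m$ is a submartingale with $\E[\#\partial[T_\infty]_{m+1}\mid\mathcal F_m]=\#\partial[T_\infty]_m+\sigma^2$, then Doob --- is correct and arguably cleaner than the paper's, which instead uses a stopping time $T_n$ and \eqref{eq:ASChernoff} to show that an overshoot at some intermediate time propagates with positive conditional probability to the next dyadic level, contradicting the dyadic bound. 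The paper's version has the advantage of treating the finite-variance and stable cases uniformly (in the stable case $\#\partial[T_\infty]_m$ is not integrable, as you note); your alternative sketch for that case (one big jump over subtree maxima) is plausible but would need real work to make rigorous.

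For the lower bound there is a genuine misstep at the start of your paragraph. You propose to lower-bound the population on $[2^k,2^{k+1})$ by $\Sigma_l$, the total progeny of the off-spine subtrees rooted at level $2^k$, and to control $\inf_{l<2^k}\Sigma_l$. But $\Sigma$ is an ordinary critical Galton--Watson process started from $\sim 2^{k\beta}$ particles and run for $\sim 2^k$ generations; its absorption at $0$ over that window occurs with probability bounded away from zero, so no useful lower bound on $\inf_l\Sigma_l$ holds often enough to Borel--Cantelli. The entrance-boundary phenomenon you invoke belongs to the \emph{size-biased} process $\#\partial[T_\infty]_m$ (the Bessel$^2(4)$ picture in the finite-variance scaling limit), not to $\Sigma$; it is the spine immigration that prevents the process from hitting zero. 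You do pivot to the size-biased process mid-paragraph, but then the phrase ``transfer this to the discrete setting (or run the corresponding supermartingale argument directly)'' is exactly where the proof lives rather than a remark: the paper carries it out by using the $h$-transform identity to bound the conditional probability (given $Z^*_{2^n}=z_0$ and endpoints in the good range) of a low excursion by $n^{C/2}$ times the corresponding probability for the \emph{unbiased} chain $Z$, and then bounding the latter by the optional stopping theorem applied to the nonnegative martingale $Z$. This is precisely the discrete scale-function computation you gesture at, and it needs to be written out for the lemma to stand.
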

 \proof We set $Z_{n}^{*} = \# \partial [T_{\infty}]_{n}$ and $Z_n = \# \partial [T]_{n}$ for an independent unconditioned $\mu$-Galton-Watson tree $T$, so that $(Z_n^*)_{n
\geq 0}$ is distributed as martingale biasing of $(Z_n)_{n
\geq 0}$ by itself. We first prove the lemma  along the subsequence $n= 2^{k}$. By \cite[Propositions 2.2 and 2.6]{CK08} there exist constants $c>0$ and $\delta >0$ such that 
 \begin{equation}\label{eq:CK08} \mathbb{P}\bigl( Z_{n}^{*} \notin (\lambda^{-1} n^{\beta }; \lambda n^{\beta })\bigr) \leq  c \lambda ^{-\delta}\end{equation}
for every $n\geq 1$ and $\lambda >1$. 
Putting $ n= 2^{k}$ and $\lambda = k^{ 2/\delta}$ we can use the Borel--Cantelli lemma to deduce that indeed   \begin{eqnarray}
k^{-2/\delta}2^{\beta k} \leq Z^{*}_{2^{k}} \leq k^{2/\delta}2^{\beta k}  \label{eq:2k}  \end{eqnarray}
for all sufficiently large $k$ almost surely.

 We now extend this estimate to all values $n \geq 1$, at the price of changing the exponent of the logarithm from $2/\delta$ to $C=(8/\delta) \vee 8$. We begin with the upper bound. Let $n\geq 1$ and let 
 \[
T_n = \inf\left\{m \geq n :  Z^{*}_{m} \geq m^{\beta} (\log m)^{C}\right\},
 \]
where we set $\inf \emptyset = \infty$. 
Since 
\[
\left\{Z^{*}_n \geq n^\beta (\log n)^{C} \text{ for infinitely many $n\geq 1$}\right\} = \bigcap_{n\geq 1}\left\{T_n < \infty\right\}
\]
It suffices to prove that $\lim_{n\to\infty} \P(T_n < \infty) =0$.
 Condition on the stopped $\sigma$-algebra $\mathcal{F}_{T_n}$, and let  $K_n = \lceil \log_2 T_n \rceil$. If $T_n<\infty$ then $ 4 T_n \geq 2^{K_n+1}- T_n \geq T_n$, and it follows from \eqref{eq:kolmogorov} and \eqref{eq:yaglom} that each of the $Z_{T_n}^* \geq T_n^{\beta} (\log T_n)^{C}$ particles in generation $T_n$ have conditional probability at least $c \, T_n^{-\beta}$ of having at least $T_n^{\beta}$ descendants at level $2^{K_n+1}$ for some constant $c>0$ (the one backbone particle having an even higher conditional probability). The conditional probability that this occurs for at least $\lceil c(\log T_n)^{C}/2 \rceil $ particles is uniformly positive by \eqref{eq:ASChernoff}, and we deduce that
 $$ 
  \mathbb{P}\Bigl( T_n<\infty \text{ and } Z^{*}_{2^{K_n+1}} \geq c'2^{\beta K_n} K_n^{C} \;\Big|\; \mathcal{F}_{T_n} \Bigr) \geq c'' {1}(T_n<\infty)$$
for some positive constants $c'$ and $c''$. Taking expectations over $\mathcal{F}_{T_n}$, we deduce that
\begin{align*}
c'' \P(T_n<\infty) \leq \mathbb{P}\Bigl( T_n<\infty \text{ and } Z^{*}_{2^{K_n+1}} \geq c'2^{\beta K_n} K_n^{C}  \Bigr) 
\leq \mathbb{P}\Bigl(Z^{*}_{2^{k+1}} \geq c'2^{\beta k} k^{C} \text{for some $k \geq \log_2 n$}\Bigr).
\end{align*}
The right hand side tends to zero as $n\to\infty$ by \eqref{eq:2k}, and we deduce that $\P(T_n<\infty) \to 0$ as $n\to\infty$ as claimed.

We now prove the lower bound, for which
 we adapt the proof of 
 \cite[Proposition 13]{BCsubdiffusive}.
  Notice that by	\eqref{eq:2k} we know that $ n^{-C/4} 2^{n\beta}\leq Z^{*}_{2^{n}} \leq 2^{n\beta} n^{C/4}$ eventually so we just need to prevent the process $Z^{*}$ from going down too low in-between times $2^{n}$ and $2^{n+1}$. For this, we consider  the conditional probability
% We will evaluate the conditional probability that during the time interval $2^{n} \to 2^{n+1}$ the process $Z^{*}$ drops below the value  $2^{n\beta}n^{-C}$ given that $Z^*_n \geq 2^{n\beta}n^{-C/4}$:
\[ \mathbb{P}\left.\Bigl( 2^{n\beta}n^{-C/4} \leq Z_{2^{n+1}}^{*} \leq 2^{n\beta}n^{C/4} \mbox{ and } \exists 2^{n} < k <2^{n+1} \mbox{ s.t. } Z_{k}^{*} \leq 2^{n\beta}n^{-C} \ \right| Z_{2^{n}}^{*} = z_{0} \Bigr),\]
 for $n^{-C/4} 2^{n\beta}\leq z_{0} \leq 2^{n\beta} n^{C/4}$. Since the Markov chain $Z^{*}$ is the size-biasing of the chain $Z$ by itself (i.e., the $h$-tranform of $Z$ with respect to the function $h(n)=n$), this conditional probability is bounded above by
% $$ \leq \frac{2^{n\beta(1+ \varepsilon)}}{2^{n\beta(1- \varepsilon)}} \mathbb{P}(Z_{2^{n}} \geq 2^{n\beta(1- \varepsilon)} \mbox{ and } \exists k \in (2^{n};2^{n+1})\mbox{ s.t. } Z_{k} \leq 2^{n\beta(1- 4 \varepsilon)}  \mbox{ and } 2^{n\beta (1 - \varepsilon)} \leq Z_{2^{n+1}} \leq 2^{n\beta(1- \varepsilon)}).$$
\begin{multline*}
\frac{2^{n\beta}n^{C/4}}{2^{n\beta}n^{-C/4}}
\\
\cdot\mathbb{P}\left.\Bigl(  2^{n\beta}n^{-C/4} \leq Z_{2^{n+1}} \leq 2^{n\beta}n^{C/4} \mbox{ and } \exists 2^{n} < k <2^{n+1} \mbox{ s.t. } Z_{k} \leq 2^{n\beta}n^{-C} \ \right| Z_{2^{n}} = z_{0}  \Bigr).
\end{multline*}
But now, since the process $Z$ is a non-negative martingale absorbed at $0$, the optional sampling theorem implies that the probability that the process $Z$ drops below $2^{n\beta}n^{-C}$ and then  later reaches a value larger than $2^{n\beta}n^{C/4}$ is less than $n^{-3C/4}$. Hence, the last display is bounded above by $n^{-C/4}$. Since $C\geq 8$ these probabilities are summable in $n$. Applying  Borel--Cantelli, we deduce that $Z_{n}^{*} \geq n^{\beta} (\log n)^{-C}$ eventually almost surely on the event that $n^{-C/4} 2^{n\beta}\leq Z^{*}_{2^{n}} \leq 2^{n\beta} n^{C/4}$ eventually. \endproof

A straightforward corollary of Lemma \ref{lem:lil} concerns the volume growth from the origin (as alluded to in the introduction): If $B_{r}$ denotes the graph ball of radius $r$ around in origin in $ \Cinfty$ and $\# B_{r}$ the number of vertices of $B_{r}$ then 
\begin{align} \label{eq:volumegrowth} r^{\beta+1} \big(\log r\big)^{-C} \leq \# B_{r}   \leq r^{\beta+1} \big(\log r \big)^{C} 
\end{align}
for all sufficiently large $r$ almost surely. 
 See \cite[Proposition 2.8]{BK06} and \cite[Lemma 5.1]{CK08} for more precise estimates.  We now have all the ingredients to complete the proof of our Theorems \ref{thm:geometry} and \ref{thm:geometrystable}. In fact, we will prove the following quantitative version of item $(i)$ of Theorem \ref{thm:geometry}.
\begin{proposition}[Quantitative girth lower bound for generic causal maps]  \label{prop:quantgeometry} Suppose $\mu$ is critical and has finite non-zero variance. Then there exists a constant $C$ such that
\[  \frac{1}{r} \mathsf{Girth}_{r}( \Cinfty) \geq  e^{-C \sqrt{\log r} } \]
almost surely for all $r$ sufficiently large. 
\end{proposition}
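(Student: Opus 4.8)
The plan is to transfer the almost-linear lower bound $f(s)\geq s\,e^{-C\sqrt{\log s}}$ on the median block width of the quarter-plane model (Theorem~\ref{thm:main}) to $\Cinfty$ via the spine decomposition --- essentially by re-running the argument of Proposition~\ref{prop:renorm} in $\Cinfty$ rather than in $\mathcal{Q}_\infty$ --- and then to upgrade the resulting ``with probability bounded below'' estimate to an almost sure bound valid for \emph{all} large $r$, using the Chernoff bound~\eqref{eq:ASChernoff} together with a Borel--Cantelli argument over a sequence of scales.

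Fix $r$ and set $s=s(r)=\lceil r\,e^{-\sqrt{\log r}}\rceil$ and $n_0=r-s$, so that $s=o(r)$ while $s\geq r\,e^{-C'\sqrt{\log r}}$ for every $C'>1$ and all large $r$. By the spine decomposition, conditionally on $\#\partial[T_\infty]_{n_0}$ the subgraph $[\mathcal{C}_{n_0}]_{r+s}$ of $\Cinfty$ is obtained from a forest of $N:=\#\partial[T_\infty]_{n_0}-1$ i.i.d.\ $\mu$-Galton--Watson trees by adding the horizontal edges and truncating at relative height $2s$; in particular the blocks defined inside this forest are i.i.d.\ copies of the blocks $\mathcal{G}_\bullet$ of the quarter-plane model, and the vertices of $[\mathcal{C}_{n_0}]_{r+s}$ at relative height $s$ are genuine elements of $\partial[T_\infty]_r$. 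By Lemma~\ref{lem:lil}, almost surely for all large $r$ we have $N\geq r(\log r)^{-2C}$. Inside this forest I would carve out a maximal run of consecutive disjoint blocks $\mathcal{G}_{2s}(1),\dots,\mathcal{G}_{2s}(K)$ of relative height $2s$; since each consumes a geometric number of trees of mean of order $s$, the Chernoff bound~\eqref{eq:ASChernoff} (exactly as in the proof of Lemma~\ref{lem:manymany}) gives, off an event of probability at most $e^{-cN/s}$,
\[ K\ \geq\ c\,\frac{N}{s}\ \geq\ c'\,e^{\sqrt{\log r}}(\log r)^{-2C}\ \longrightarrow\ \infty. \]

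Next I would bound $\mathsf{Girth}_r(\Cinfty)$ from below by mimicking Proposition~\ref{prop:renorm}. Pick any vertices $x$ and $y$ at height $r$ lying in the blocks $\mathcal{G}_{2s}(\lceil K/4\rceil)$ and $\mathcal{G}_{2s}(\lceil 3K/4\rceil)$ respectively, so that $x$ and $y$ sit at the mid-level of their blocks and are roughly antipodal in the cyclic ordering of $\partial[T_\infty]_r$, and let $\gamma$ be any path in $\Cinfty$ from $x$ to $y$. If $\gamma$ leaves the band of heights $[n_0,r+s]$, then, since $x$ and $y$ both lie at height $r$, which is at distance $s$ from each end of this band, we get $|\gamma|\geq s\geq r\,e^{-C'\sqrt{\log r}}$. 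Otherwise $\gamma$ stays in the band, in which case it runs from $x$ to $y$ along one of the two cyclic arcs between them and must therefore cross, from left boundary to right boundary, at least $cK$ of the i.i.d.\ blocks $\mathcal{G}_{2s}(i)$ on that arc (for an absolute constant $c>0$); each such crossing stays inside the corresponding $\mathcal{G}_{2s}(i)$ --- here one uses, as in the proof of Proposition~\ref{prop:renorm}, that a block reaches by construction the top level of the band, so it cannot be ``jumped over'' --- and hence contributes at least $\mathsf{Width}(\mathcal{G}_{2s}(i))$ to $|\gamma|$. Since the $\mathcal{G}_{2s}(i)$ are i.i.d.\ copies of $\mathcal{G}_{2s}$ of median width $f(2s)$, the Chernoff bound~\eqref{eq:ASChernoff}, together with a union bound over the $O(K^2)$ runs of consecutive blocks, shows that with probability at least $1-e^{-\eta K}$ every run of at least $cK$ consecutive blocks has total width at least $\tfrac{c}{8}K\,f(2s)$. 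On the intersection of these events, using Theorem~\ref{thm:main} and the lower bound on $K$ above,
\[ \mathsf{Girth}_r(\Cinfty)\ \geq\ \min\Bigl\{\,s,\ \tfrac{c}{8}K\,f(2s)\,\Bigr\}\ \geq\ \min\Bigl\{\,s,\ c''\,r\,(\log r)^{-2C}\,e^{-C\sqrt{\log(2s)}}\Bigr\}\ \geq\ r\,e^{-C'''\sqrt{\log r}} \]
for all large $r$, the polylogarithmic and constant factors being absorbed into the exponential at the cost of enlarging the constant.

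Finally, off the almost sure event of Lemma~\ref{lem:lil}, the exceptional probability in the previous paragraph is at most $e^{-cN/s}+e^{-\eta K}$, and since both $N/s$ and $K$ are at least $c'e^{\sqrt{\log r}}(\log r)^{-2C}\to\infty$ this is summable in $r$; hence Borel--Cantelli gives $\mathsf{Girth}_r(\Cinfty)\geq r\,e^{-C'''\sqrt{\log r}}$ almost surely for all sufficiently large $r$, which is the claim. The step I expect to be the genuine obstacle is the geometric dichotomy for $\gamma$: making rigorous, in the cone-like geometry of $\Cinfty$ (rather than the half-plane geometry of $\mathcal{Q}_\infty$ treated in Proposition~\ref{prop:renorm}), that a path between two roughly antipodal mid-band vertices at height $r$ must either exit the band or genuinely traverse a positive fraction of the i.i.d.\ blocks --- in particular, correctly handling the spine subtree (which lies between the two ends of the strip and carries no width control), the horizontal edges near the top level of the band, and the claim that crossing a block really does cost at least its width.
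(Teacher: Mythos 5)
Your approach is correct in outline, but it re-derives more than it needs to and is considerably more complicated than the paper's own proof. The paper's key observation, which you do not exploit, is that Theorem~\ref{thm:main} already gives a \emph{single} block of height $m \approx n / (\log n)^{\mathrm{const}}$ a width $\geq m\,e^{-C\sqrt{\log m}} \geq n\,e^{-C'\sqrt{\log n}}$; so there is no need to sum widths over many blocks. The paper chooses $m(n) = n(\log n)^{-(3+C_1)/\beta}$, shows via Lemma~\ref{lem:lil}, \eqref{eq:kolmogorov} and \eqref{eq:ASChernoff} that $[\mathcal{C}_n]_m$ a.s.~eventually contains at least \emph{two} disjoint blocks $\mathcal{G}^{(n,1)},\mathcal{G}^{(n,2)}$ of height $m$ with width $\geq m\,e^{-C_2\sqrt{\log m}}$, and then invokes the clean geometric dichotomy: a path in $\Cinfty$ between a left-boundary and a right-boundary vertex of $\mathcal{G}^{(n,1)}$ at height $\ell\in[n+m/4,n+3m/4]$ either leaves the strip $[n,n+m]$ (cost $\geq m/4$) or crosses one of $\mathcal{G}^{(n,1)},\mathcal{G}^{(n,2)}$ (cost $\geq$ that block's width). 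No summation over a large run of blocks is required, and the spine and the ``gap'' past the last complete block cause no difficulty because only the two designated blocks need to be avoided.

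By contrast, you take $s\approx r\,e^{-\sqrt{\log r}}$, build $K\approx e^{\sqrt{\log r}}$ blocks of height $2s$, and replay the full renormalisation step of Proposition~\ref{prop:renorm} inside $\Cinfty$, summing $\sim K/2$ widths. This can be made to work, and your three stated worries --- the spine subtree, horizontal edges at the top of the band, and the claim that a crossing costs at least $\mathsf{Width}$ --- are all resolvable along the lines of Proposition~\ref{prop:renorm} (the spine/gap sit on only one of the two cyclic arcs between your roughly antipodal blocks $\lceil K/4\rceil,\lceil 3K/4\rceil$, so both arcs still contain $\gtrsim K/2$ blocks; the top of a block \emph{is} the top of the band, so ``going over the top'' means leaving the band; and a segment that enters a block from the left and exits to the right while staying in the band lies entirely in the block and hence has length $\geq \mathsf{Width}$). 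But notice that with your choice of $s$ even a \emph{single} block width $f(2s)\geq 2s\,e^{-C\sqrt{\log 2s}} \gtrsim r\,e^{-(1+C)\sqrt{\log r}}$ already meets the required bound, so the whole summation machinery is unnecessary; the paper's two-block argument is the simpler and more direct route.
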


\proof[Proof of Theorem  \ref{thm:geometry} (i), Theorem \ref{thm:geometrystable} (i), and Proposition \ref{prop:quantgeometry}] Fix $ \varepsilon>0$. Pick  $n \geq 1$ large and consider the forest $ \mathcal{F}_{n}$ introduced above. This forest is obviously finite.

 By \eqref{eq:kolmogorov} and \eqref{eq:ASChernoff}, conditionally on the event that $\# \partial [T_{\infty}]_{n} \geq n^{\beta}(\log n)^{-C_{1}}$, the probability that there are at least $(\log n)^{2}$ trees  inside this forest which reach height at least $m=m(n)=n(\log n)^{-{(3+C_1)\over \beta}}$ (relative to their starting height of $n$) is lower bounded by 
 $$  \mathbb{P}\Big(\mathrm{Binomial}\Big(\lfloor n^{\beta}(\log n)^{-C_{1}} \rfloor , \mathbb{P}\big( \mathrm{Height}(T) \geq m(n)\big)\Big) \geq (\log n)^{2} \Big) \geq  1- \exp\bigl(- \delta (\log n)^3\bigr),$$ for some $\delta>0$. On this event, using our Theorem \ref{thm:main}, we see that the probability that $ [ \mathcal{C}_{n}]_{m}$ contains at least two disjoint blocks $\mathcal{G}^{(n,1)},\mathcal{G}^{(n,2)}$ of height $m$ whose widths are both at least 
\begin{align*} m e^{-C_2\sqrt{\log m}} &\geq n e^{-C_3\sqrt{\log n}} && \text{ if } \beta =1  \\
c m &= c n (\log n)^{-C_4} &&\text{ if } \beta >1
 \end{align*} is bounded from below by $1 - \exp(-\delta' (\log n)^2)$ for some finite constants $C_2,C_3,C_4$ and some $\delta' >0$. Thus, by Lemma \ref{lem:lil} and the Borel--Cantelli lemma we deduce that 
both events occur for all sufficiently large $m$ almost surely. 

% the left-right width of $ [\mathcal{C}_{n}]_{m}$ is bounded from below by $n\log^{-3C} n$ eventually. 
Let $n+m/4 \leq \ell \leq n+3m/4$, and let $u,v$ be vertices at height $\ell$ that are in the left and right boundaries of the block $\mathcal{G}^{(n,1)}$ respectively. Then any path from $u$ to $v$ must either leave the strip of vertices of heights between $n$ and $n+m$, or else must cross at least one of the blocks $\mathcal{G}^{(n,1)}$ or $\mathcal{G}^{(n,2)}$. From here we see immediately that there exists $C_5<\infty$ such that the bound
\begin{align*} \mathsf{Girth}_{\ell}(\Cinfty) &\geq \begin{cases}
% \min (\frac{1}{2} n^{1- 2\varepsilon}, n^{1-3 \varepsilon}) \geq n^{1-3 \varepsilon} 
n e^{-C_5\sqrt{\log n}}
& \text{ if }\beta =1\\
n (\log n)^{-C_5}& \text{ if } \beta>1,
\end{cases}
& n+m/4\leq \ell \leq n+3m/4
\end{align*}
holds  eventually for all sufficiently large $n$ almost surely,
concluding the proof.\endproof

The second part of Theorem \ref{thm:geometrystable} follows the same lines. Let us sketch the argument.
\proof[Sketch of proof of the second part of Theorem \ref{thm:geometrystable}] Suppose that $\mu$ satisfies \eqref{eq:defstable} and recall that $\beta = \frac{1}{\alpha-1}$ >1.  Since by \eqref{eq:yaglombis} the variable $\# \partial [T_{\infty}]_{n}$ is typically of order $n^{\beta}$, using \eqref{eq:kolmogorov} we deduce that the number of trees in the forest $ \mathcal{F}_{n}$ which reach height $ \eta n$ tends in probability to $\infty$ as $ \eta \to 0$ and $n \to \infty$. In particular, for any $ \varepsilon>0$ and  any $k_{0} \geq 1$ we can find $\eta$ small enough such that for any large enough $n$, the graph $\mathcal{C}_{n}$ contains at least $k_{0}$ independent blocks of height $\eta n$ with probability at least $1 - \varepsilon$. By Theorem \ref{thm:main}, with probability at least $1- (1+2k_0)2^{-k_{0}}$ the left-right width of at least two of these blocks is larger than $c \eta n$. Choosing $k_{0}$ so that $(1+2k_0)2^{-k_{0}}\leq \varepsilon$, we deduce (using the same argument as above) that with probability at least $1 - 2 \varepsilon$ the girth at level of $ \Cinfty$ between $n(1+\eta/4)$ and $n(1+3\eta/4)$ is at least $c\eta n$. This entails Theorem \ref{thm:geometrystable}.\endproof

Finally, we prove the upper bound on the girth in the finite-variance case.
 % that $\mu$ has finite variance:

\proof[Proof of Theorem \ref{thm:geometry} (ii)] We fix $\mu$ critical and having a finite, non-zero variance. Fix $n \geq 1$ large and consider the graph $ \mathcal{C}_{n}$, which is a subgraph of $ \Cinfty$. As before, conditional on $ \# \partial [T_{\infty}]_{n} = \ell $ this graph is made of $\ell-1$ i.i.d.~Galton--Watson trees together with the added horizontal connections.  Proposition \ref{prop:subadditive} directly tells us that the distance inside $ \mathcal{C}_{n}$ between its bottom-left corner $x$ and its bottom-right corner $y$ is $o(\ell)$ with high probability. 

\medskip

\noindent \begin{minipage}{9cm}
Since $x$ and $y$ are both incident to the spine vertex at level $n$, we can use two horizontal edges  to link $x$ to $y$ in $ \Cinfty$ as depicted on the right. Since $\ell=O(n)$ with high probability by \eqref{eq:yaglombis},
this argument shows that for each $\eps>0$ there exists $N<\infty$ such that if $n\geq N$ then we can construct, with probability at least $1-\eps$,  a loop $ \mathcal{L}$ inside $  \Cinfty$ of length at most $\eps n$ that separates $\rho$ from $\infty$  and that only contains vertices of height between $n$ and $(1+\eps)n$. %Indeed, simply take $\mathcal{L}$ to be $\gamma_1\wedge \gamma_2$ together with the two horizontal edges joining $x$ and $y$ to the spine.
\end{minipage} 
\hspace{1cm}
\begin{minipage}{3cm}
\includegraphics[width=3cm]{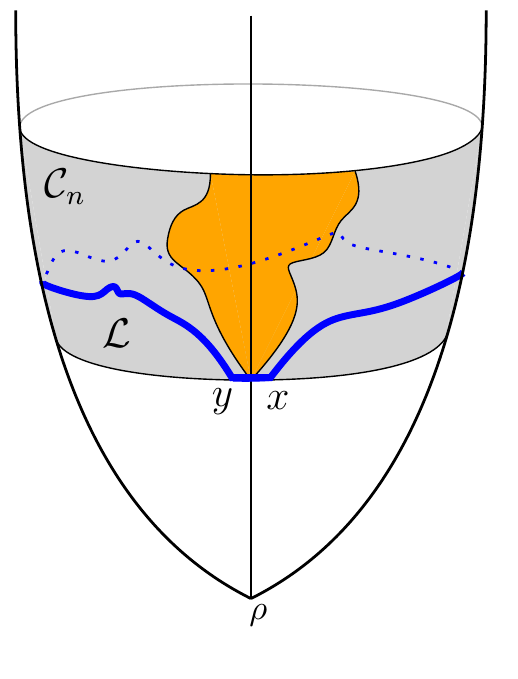}

\end{minipage}
\medskip

Let $n\geq N$, condition on this event, and consider the set of vertices of $ \Cinfty$ at height $n'=n+\lceil \eps n \rceil$. Each such vertex is connected to a vertex at height $n$ by a path of length $\lceil \eps n\rceil$, and this path must intersect $\mathcal{L}$. We deduce that the distance between any vertex of $[T_{\infty}]_{n'}$ and $ \mathcal{L}$ is less than $ \varepsilon n$. We deduce that $\mathsf{Girth}_{n'}(T_{\infty}) \leq 3\eps n$ with probability at least $1-\eps$ for every $n\geq N$, from which the proof may easily be concluded. \endproof

 \section{Resistance growth and spectral dimension}
 \label{sec:spectral}

In this section we will prove Theorem \ref{thm:spectral}, via Theorem \ref{thm:resistance}. Since certain arguments are valid in general, we highlight when finite variance is needed. 
% As announced in the introduction the proof will go through an estimation of the effective resistance between the origin and the boundary of the ball of radius $r$ in $ \Cinfty$.
\subsection{Resistance}
  % \begin{proposition}[Resistance is subpolynomial]\label{prop:resistance} Suppose that $\mu$ has a finite and non-zero variance. If $\rho$ is the origin of $ \Cinfty$ and $B_{r}$ denotes its ball of radius $r \geq 0$ then  we have
  % $$ \frac{\log \mathsf{R_{eff}}( \rho \leftrightarrow B_{r}^{c})}{\log r}  \xrightarrow[r\to\infty]{a.s.} 0.$$
  % \end{proposition}

 The resistance will be controlled through the method of random paths and builds upon the geometric estimates established in the preceding section. In this section, all resistances will be taken with respect to the graph $\Cinfty$. Before diving into the proof  of Theorem \ref{thm:resistance}, we first prove that $ \Cinfty$ is recurrent (i.e., that $ \mathsf{R_{eff}}(\rho \leftrightarrow B_r^c)=\mathsf{R_{eff}}( \rho \leftrightarrow \partial [T_\infty]_{r+1} ; \Cinfty) \to \infty$ as $r\to\infty$).
  % and then establish a few more geometric controls on $ \Cinfty$.
 \begin{proposition} \label{prop:recurrence} If $\mu$ has finite non-zero variance then $ \Cinfty$ is recurrent almost surely.
 \end{proposition}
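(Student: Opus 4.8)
The plan is to prove recurrence of $\Cinfty$ by exhibiting unit flows of finite energy, or equivalently by showing that the effective resistance from $\rho$ to infinity is infinite. The cleanest route uses the Nash-Williams inequality together with the now-established fact that the girth at height $r$ is $n^{1-o(1)}$ in the finite-variance case. First I would recall that $\Cinfty = \mathsf{Causal}(T_\infty)$ is a planar graph with one end in which the horizontal circle $\partial[T_\infty]_r$ of vertices at height $r$ separates $\rho$ from infinity, and that the edges (horizontal and vertical) joining level $r-1$ to level $r$ (together with the horizontal edges within level $r$) form a cutset $\Pi_r$ between $\rho$ and $B_r^c$. By the Nash-Williams criterion,
\[ \mathsf{R_{eff}}(\rho \leftrightarrow \infty;\, \Cinfty) \geq \sum_{r\geq 1} \frac{1}{|\Pi_r|}, \]
so recurrence follows as soon as we show $\sum_r |\Pi_r|^{-1} = \infty$, i.e. that $|\Pi_r|$ does not grow too fast along a subsequence.

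The key input is that $\#\partial[T_\infty]_r$ is typically of order $r$ in the finite-variance case (this is the $\beta=1$ instance of \eqref{eq:yaglombis}), and more robustly that $\#\partial[T_\infty]_r \leq r(\log r)^{C}$ for all large $r$ almost surely by Lemma~\ref{lem:lil}. Since $|\Pi_r|$ is at most a constant times the total number of vertices in levels $r-1$ and $r$, this gives $|\Pi_r| \leq C' r(\log r)^{C}$ eventually almost surely, and therefore
\[ \sum_{r\geq 2} \frac{1}{|\Pi_r|} \geq \frac{1}{C'}\sum_{r\geq 2} \frac{1}{r(\log r)^{C}}. \]
Unfortunately this sum converges when $C>1$, so the crude Nash-Williams bound along every level is not quite enough; the resolution is to use disjoint cutsets that are \emph{spread out} in height. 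Concretely I would fix a sparse sequence of scales, say $r_k = 2^k$, and for each $k$ use as a cutset the collection of horizontal edges in \emph{some well-chosen} level between $r_k$ and $r_{k+1}$ at which $\#\partial[T_\infty]_\cdot$ is close to its typical value — or, better, invoke Theorem~\ref{thm:geometry}(i)/Proposition~\ref{prop:quantgeometry} which we have already proved: the girth at height $r$ is at least $re^{-C\sqrt{\log r}}$, but what we actually want is the complementary fact that a left-right crossing of the annulus between heights $n$ and $2n$ requires order $n$ edges, so that the \emph{number of edge-disjoint such crossings}, equal to the dual width studied in Section~\ref{sec:dualdiam}, controls the conductance across the annulus from below only weakly — hence it is cheaper to go back to counting vertices.

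The honest and simplest argument, which I would ultimately write, is this: by Lemma~\ref{lem:lil} there is $C<\infty$ with $\#\partial[T_\infty]_r \leq r(\log r)^C$ for all large $r$ a.s.; the cutset $\Pi_r$ separating $\rho$ from $B_r^c$ consisting of all edges between heights $r-1$ and $r$ plus all horizontal edges at height $r$ has size at most $3\#\partial[T_\infty]_r + \#\partial[T_\infty]_{r-1} \leq 5r(\log r)^C$ eventually. This alone does not diverge, so instead I group: for the cutset between heights $r$ and $2r$, rather than a single level, I use the set of \emph{all horizontal edges} at every level in $[r,2r]$ — no, that makes it larger. The correct classical trick is the one used for recurrence of the UIPT-type bounds: apply Nash-Williams not to single-level cutsets but note that $\mathsf{R_{eff}}(\rho\leftrightarrow B_{2r}^c) \geq \mathsf{R_{eff}}(\rho\leftrightarrow B_r^c) + \mathsf{R_{eff}}(\partial B_r \leftrightarrow B_{2r}^c)$ by series law after collapsing $\partial B_r$, and for the annular resistance use that the number of edges crossing it is $O(r\cdot r(\log r)^C)=O(r^2(\log r)^C)$ while it must be crossed by $\geq r$ disjoint unit-length... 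Actually the decisive and painless step is: the annulus between heights $r$ and $2r$ has at least (girth bound) the property that any path crossing it has length $\geq r$, so the number of edge-disjoint crossing paths is at most (total edges)$/r = O(r(\log r)^C)$, hence $\mathsf{R_{eff}}(\partial B_r\leftrightarrow B_{2r}^c)\geq$ (length $r$) / (number of parallel crossings) $\gtrsim r / (r(\log r)^C) = (\log r)^{-C}$. Summing over dyadic scales $r=2^k$, $\sum_k (\log 2^k)^{-C} = \sum_k (k\log 2)^{-C}$, which diverges precisely when $C\leq 1$; if Lemma~\ref{lem:lil} only gives $C$ possibly larger than $1$, one instead uses the sharper a.s. statement that $\#\partial[T_\infty]_r = r^{1+o(1)}$ along a density-one set of scales, or the in-probability statement $\#\partial[T_\infty]_r \asymp r$, combined with a Borel–Cantelli argument over sparse scales to make the divergence work. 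The main obstacle, and the place I would spend the most care, is exactly this logarithmic gap: converting the $r^{1\pm o(1)}$ vertex-count and the $r^{1-o(1)}$ girth bound into an honest divergent series, which requires choosing the scales sparsely enough (e.g. $r_k = \exp(k^2)$ or $\exp(e^k)$) that even a power of $\log$ per scale still sums to infinity, while keeping the cutsets disjoint so Nash-Williams applies; once the scale sequence is chosen, everything else is the series law plus the counting bound plus Lemma~\ref{lem:lil} plus Borel–Cantelli.
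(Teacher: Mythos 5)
Your Nash--Williams approach with level-by-level cutsets is the same strategy the paper uses, but you are missing the technical tool that closes it. A minor point first: the cutset separating $\rho$ from everything at height $>r$ can be taken to be just the vertical edges between levels $r$ and $r+1$ --- the horizontal edges at level $r+1$ already lie entirely on the far side of the cut --- and since each vertex at level $r+1$ has exactly one parent, this cutset has cardinality exactly $\#\partial[T_\infty]_{r+1}$. Nash--Williams therefore reduces the proposition to showing $\sum_r 1/\#\partial[T_\infty]_r = \infty$ almost surely.

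The genuine gap is how you establish that divergence. You correctly observe that the a.s.\ bound $\#\partial[T_\infty]_r \leq r(\log r)^{C}$ from Lemma~\ref{lem:lil} is too weak (the resulting comparison series converges when $C>1$), but none of your proposed fixes works. Thinning to a sparse sequence of scales $r_k$ only removes terms from the sum and so cannot help you prove divergence; and the series-law decomposition over dyadic annuli, once you lower-bound each annular resistance by Nash--Williams over its own levels, simply reassembles the same series you started with --- you go in a circle. The paper's resolution is orthogonal to everything you tried: write $1/\#\partial[T_\infty]_r = (1/r)\cdot \bigl(r/\#\partial[T_\infty]_r\bigr)$, use only the \emph{distributional} convergence $r/\#\partial[T_\infty]_r \Rightarrow 1/\mathcal{X}'$ from \eqref{eq:yaglombis} (no a.s.\ control on level sizes is needed at all), and invoke Jeulin's Lemma \cite[Proposition 4 c]{Jeu82}, which treats exactly the situation of a deterministic divergent series $\sum 1/r$ weighted by a possibly dependent sequence of positive random variables converging in law to a strictly positive limit, and concludes a.s.\ divergence. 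Without Jeulin's Lemma or an equivalent device, the logarithmic gap you flagged is a real obstruction and your proof is incomplete.
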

 \proof We apply the Nash--Williams criterion for recurrence \cite[(2.14)]{LP10}, using the obvious collection of cut-sets given by the sets of edges linking level $r$ to level $r+1$ for each $r\geq1$. This edge set has cardinality precisely $\#\partial [T_{\infty}]_{r+1}$  so the proposition reduces to checking that
 % In other words, we identify all the vertices at level $r$ into a single vertex and prove that the resistance between the origin and $\infty$ of the new graph still diverges. To see this, notice that the number of parallel edges in the new graph between level $r$ and $r+1$ is simply given by $ \#\partial [T_{\infty}]_{r+1}$, and so the proposition reduces to checking that
 $$ \sum_{r=1}^{\infty} \frac{1}{ \# \partial [T_{\infty}]_{r}} = \infty, \quad \mbox{ almost surely}.$$
Since we have that $\frac{1}{ \# \partial [T_{\infty}]_{r}} = \frac{1}{r} \frac{r}{ \# \partial [T_{\infty}]_{r}}$ and  by \eqref{eq:yaglombis} that the random variable $\frac{r}{ \# \partial [T_{\infty}]_{r}}$ converges in law towards a positive random variable, the last display is a direct consequence of Jeulin's Lemma \cite[Proposition 4 c]{Jeu82}. \endproof

\begin{remark}
Let us briefly discuss quantitative resistance lower bounds. 
It follows immediately from Nash-Williams that
\[
\E \left[\mathsf{R_{eff}}(\rho \leftrightarrow B_r^c) \right] \geq \sum_{k=1}^r \frac{1}{k} \E \left[\frac{k}{ \# \partial [T_{\infty}]_{k}}\right] \geq c \log r 
\]
for some $c>0$ by \eqref{eq:yaglombis}.
%  With a little further work it can be shown that the resistance is at least logarithmic with high probability. More precisely,
% \[
% \lim_{\eps\downarrow 0} \liminf_{r\to\infty}\P\Bigr(\mathsf{R_{eff}}(\rho \leftrightarrow B_r^c) \geq \eps \log r\Bigr)=1.
% \]
% In particular, this follows very easily once we pass to the CSBP scaling limit of $(r^{-1} \# \partial [T_{\infty}]_{i})_{i=1}^r$.
 With a little further effort 
 % moreover be fairly straightforward 
 one can prove an almost sure lower bound on the resistance growth of the form 
\[
\mathsf{R_{eff}}(\rho \leftrightarrow B_r^c) \geq \sum_{k=1}^r \frac{1}{ \# \partial [T_{\infty}]_{k}} \geq c \log r \qquad \text{ for all $n$ sufficiently large a.s.}
\] 
Indeed, in the analogous statement for the CSBP the contributions from successive  dyadic scales form a stationary ergodic sequence and the result follows from the ergodic theorem. Pushing this argument through to the discrete case requires one to handle  some straightforward but tedious technical details. 
We do not pursue this further here.
\end{remark}

\proof[Proof of Theorem \ref{thm:resistance}.] 

% Let $h_n
Recall the assumption that $\mu$ is critical and has finite, non-zero variance. By Lemma \ref{lem:lil} there exists a constant $C_1>0$ such that the number of vertices in level $r$ satisfies $r (\log r)^{-C_{1}} \leq \# \partial [T_{\infty}]_{r} \leq r (\log r)^{C_1}$ for all $r$ larger than some almost surely finite random $r_0$. 
Arguing as in the proof of Proposition \ref{prop:quantgeometry} but applying Theorem \ref{thm:maindual} instead of Theorem \ref{thm:main}, we obtain that there exist constants $C_2,C_3$ such that there exists an almost surely finite $m_0$ such that for every $m\geq m_0$ and every $m (\log m)^{-C_2} \leq k\leq 3m (\log m)^{-C_2}$, 
the subgraph $[\mathcal{C}_{m}]_{k}$ of $\Cinfty$, defined in Section \ref{sec:width}, contains a block of height equal to $k$ and dual width at least $k e^{-C_3\sqrt{\log k}}$.

Consider the increasing sequence of natural numbers $h_n$ defined by
\[
h_n = \left\lfloor 
\exp\left((C_2+1)^{1/(C_2+1)}n^{1/(C_2+1)}\right)
\right\rfloor,
\]
and let $k_n=h_{n+2}-h_n$.
These numbers have been chosen to satisfy the asymptotics $k_n \sim 2 h_n (\log h_n )^{-C_2}$ as $n\to\infty$, so that in  particular $ h_n (\log h_n )^{-C_2} \leq k_n \leq 3 h_n (\log h_n )^{-C_2}$ for all $n$ larger than some $n_0'<\infty$. Thus, it follows from the discussion in the previous paragraph that there exists an almost surely finite $n_0''$ such that for each $n\geq n_0''$, the subgraph $[\mathcal{C}_{h_n}]_{k_n}$ of $\Cinfty$ contains a block $\mathcal{G}^{(n)}$ of height equal to $k_n$ and dual width at least 
$k_n e^{-C_3\sqrt{\log k_n}}$. 
Let $n_0 \geq n''_0$ be minimal such that $h_{n_0} \geq r_0$ and let $\Omega$ be the almost sure event that $n_0$ is finite.

% Let $C_2,C_3>0$ be constants to be chosen later and let
% \[
% h_n= 2^{\lfloor n \log^{-C_2} n \rfloor}
% \]
% for each $n\geq 1$.
%    Arguing as in the proof of Proposition \ref{prop:quantgeometry} but applying Theorem \ref{thm:maindual} instead of Theorem \ref{thm:main}, we obtain that if $C_2, C_3$ are sufficiently large then the subgraph $[\mathcal{C}_{h_n}]_{h_{n+2}-h_n}$ of $\Cinfty$, defined in Section \ref{sec:width}, contains a block $\mathcal{G}^{(n)}$ of dual width at least $2^n e^{-C_3\sqrt{n}}$ for all $n$ larger than some almost surely finite $n'_0$ almost surely. Fix such a $C_2$ and $C_3$. 
% Let $n_0 \geq n'_0$ be such that $h_{n_0} \geq r_0$ and let $\Omega$ be the almost sure event that $n_0$ is finite.

% for some appropriately chosen constant $C_3$
Since the resistance $\mathsf{R_{eff}}(\rho \leftrightarrow B_{r}^{c})$ is increasing in $r$, it suffices to prove that there exists a constant $C_4$ such that
\[\mathsf{R_{eff}}\bigl(\rho \leftrightarrow B_{h_n}^{c}\bigr) \leq e^{C_4\sqrt{\log h_n}}\]
almost surely as $n\to\infty$. We will prove that this is the case deterministically on the event $\Omega$. In order to do this,  we use the \emph{method of random paths} (see \cite[Chapter 2.5]{LP10}). In particular, we will construct   a random path $\Gamma$ from $\rho$ to the boundary of the ball of radius $h_n$, and 
% . For any such random path $\Gamma$, we have that
% The effective resistance $ \mathsf{R_{eff}}(\rho \leftrightarrow B_{r}^{c})$ is 
then bound the resistance by 
the ``energy'' of the path\footnote{Strictly speaking the quantity on the right of \eqref{eq:energy} is not the energy of $\Gamma$, but rather an upper bound on the energy of $\Gamma$.}:
  \begin{eqnarray} \label{eq:energy} \mathsf{R_{eff}}(\rho \leftrightarrow B_{r}^{c}) \leq 2 \sum_{ e \in \mathsf{Edges}(B_{r})} \mathbb{P}( \Gamma \mbox{ goes through }e \mid \Cinfty  \mbox{ and } \Omega)^{2}.  \end{eqnarray}

 % claim for $r$ of the form $r=h_n$.

%   Let $\Omega$ be the almost sure event that $n_0=n'_0 \vee n_0''$ is finite. We will prove that the claimed resistance bound holds deterministically on $\Omega$. 

% Let

% for each $n\geq 1$. 
% Since the resistance $\mathsf{R_{eff}}(\rho \leftrightarrow B_{r}^{c})$ is increasing in $r$, it suffices to prove that 

Condition on $\Cinfty$ and the event $\Omega$.
 % For the rest of the proof we will stop keeping track of constants. 
By the discussion of Section \ref{sec:dualdiam}, for each $m \geq n_0$, the subgraph $\mathcal{G}^{(m)}$ of $[\mathcal{C}_{h_m}]_{k_m}$ contains a set of at least $k_m e^{-C_3\sqrt{\log k_m}}$ edge-disjoint paths linking its bottom boundary to its top boundary. Indeed, the maximal size of such a set is equal to the dual left-right width of $\mathcal{G}^{(m)}$. Fix one such maximal set for each $m$ and let $\Gamma^{(m)}$ be a uniformly chosen element of this set. We let $s_{n_0}=h_{n_0}$ and for each $m> n_0$ we let $s_m$ be a uniform index between $h_m$ and $h_{m+1}$.  

We now build the random path $\Gamma$ starting from $\rho$ inductively as follows.  To start, we pick arbitrarily a path from $\rho$ to level $h_{n_0}$ to be the initial segment of $\Gamma$. We then let $\Gamma$ travel horizontally around level $h_{n_0}$ to meet the starting point of the path $\Gamma^{(n_0)}$. Following this, for each $n_0 \leq m\leq n-3$,
% whenever $\Gamma$ meets level $s_m$
 between heights $s_{m}$ and $s_{m+1}$, the path $\Gamma$ follows the segment of $\Gamma^{(m)}$ between its last visit to height $s_m$ and its first visit to height $s_{m+1}$. When $\Gamma$ reaches level $s_{m+1}$, it travels horizontally around that level to join the path $\Gamma^{(m+1)}$ at the site of its last visit to that level. Finally, $\Gamma$ takes the segment of $\Gamma^{(n-2)}$ between levels $s_{n-2}$ and $h_{n}$, at which point it stops.

 % Then turn around the cycle at current height to reach the origin of the path $\gamma_{i}^{(n)}$ and climb up this path to reach level $n'$. Iterate the process until we reach height $r$. See Fig. \ref{fig:randompath}.
 \vspace{1em}
\begin{figure}[!h]
 \begin{center}
 \includegraphics[width=12cm]{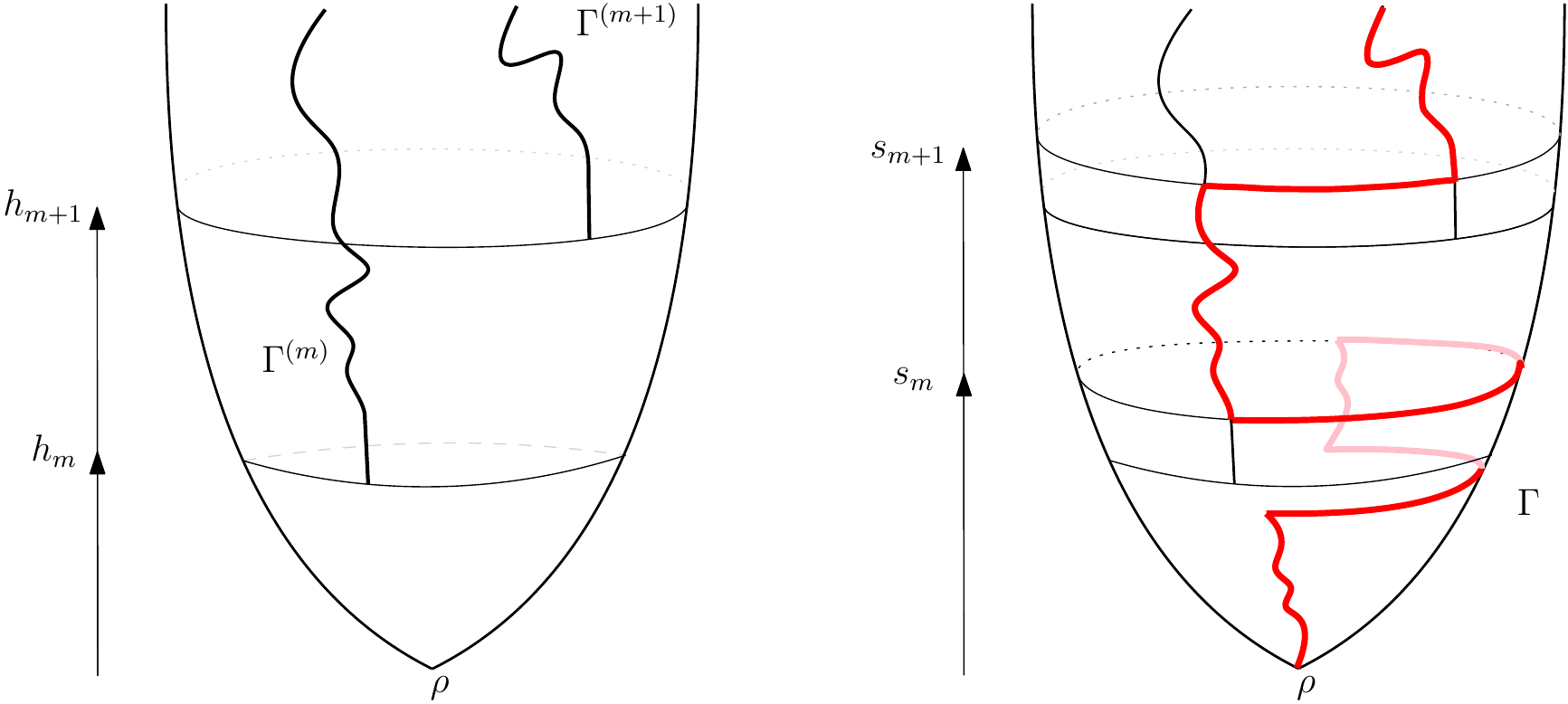}
 \caption{ \label{fig:randompath}
 \small{
 Illustration of the random path $\Gamma$ used in the proof of Theorem \ref{thm:resistance}. 
Left: for each $n_0 \leq m \leq n-2$ we have a path $\Gamma^{(m)}$ from level $h_m$ to level $h_{m+2}$. Right: The path $\Gamma$ switches from $\Gamma^{(m)}$ to $\Gamma^{(m+1)}$ by turning through a horizontal arc at the random height $s_{m+1}$.
 }}
 \vspace{-1em}
 % Left: 
 % Setting of Lemma \ref{lem:paths}. Right: Illustration of the construction of the random path $\Gamma$. Once $\Gamma$ has reached level $n$ we independently perform two operations: first we chose $1 \leq i \leq k_{n}$ and move around $\partial [T_{\infty}]_{n}$ to reach the origin of the path $\gamma_{i}^{(n)}$. Then we pick a target height $n' \in [| n + \frac{1}{2} n^{1- \varepsilon}, n+ n^{1- \varepsilon}]]$ and move-up $\gamma_{i}^{(n)}$ to reach this height. We then iterate the construction until we reach height $r$. 
 \end{center}
 \end{figure}
We shall now estimate the energy of this random path.
% By Cauchy-Schwarz, we have that
Let $e \in B_{h_{n}}$ be an edge of height $h_m \leq \ell < h_{m+1}$ for some $n_0 \leq m < n$, where the height of an edge is defined to be the minimal height of its endpoints. Then we can compute that 
 \begin{multline*}
\P( \Gamma \text{ goes through } e \mid \Cinfty, \Omega)  \\
\leq\P( \Gamma^{(m-1)} \text{ goes through } e \mid \Cinfty, \Omega) + \P( \Gamma^{(m)} \text{ goes through } e \mid \Cinfty, \Omega) + \P(s_m = \ell)\\
\leq k_{m-1}^{-1} e^{C_3 \sqrt{\log k_{m-1}}} + k_{m}^{-1} e^{C_3 \sqrt{\log k_{m}}} + \frac{1}{h_{m+1}-h_m} \leq \ell^{-1} e^{C_5 \sqrt{\log \ell}} \, ,
 \end{multline*}
 %=2^{-m}e^{O(\sqrt{m})} =
 where $C_5>0$ is another constant. Note that the number of edges at height $\ell$ is equal to $\#\partial [T_\infty]_\ell + \#\partial [T_\infty]_{\ell+1}$, and hence is at most $O(\ell \log^{C_1} \ell)$ on the event $\Omega$. 
 % Thus, it follows that the sum 
 % \[\sum_{\ell=h_m}^{h_{m+1}-1} \sum_{e} \P( \Gamma \text{ goes through } e \mid \Cinfty, \Omega)^2 1\bigl[ e \text{ has height }\ell \bigr] \leq e^{O(\sqrt{m})} n^{O(1)}= e^{O(\sqrt{m})}\]
 % for every $n_0 \leq m \leq n-1$. 
 On the other hand, the initial segment of $\Gamma$ reaching from $\rho$ to level $h_{n_0}$ increases the energy of $\Gamma$ by at most a constant. Thus, we have that
 \[
\mathsf{R_{eff}}\bigl(\rho \leftrightarrow B^c_{h_n}\bigr) \leq O(1) + 
\sum_{\ell=h_{n_0}}^{h_n} \left[\ell^{-1} e^{C_5\sqrt{\log \ell}} \right]^2 O(\ell \log^{C_1} \ell) \leq e^{C_6 \sqrt{\log h_n}} \, ,
 \]
 for some constant $C_6>0$, as claimed.
\endproof

\begin{remark} One can adapt the proof of Theorem \ref{thm:spectral} to the $\alpha$-stable case. 
% In particular,  Lemma \ref{lem:degree} and Lemma \ref{lem:paths} are valid in the general case as well as $ \mathsf{Width}_{r}(T_{\infty}) = r^{1+o(1)}$. 
Following the same construction of the random path as in the above proof and applying Lemma \ref{lem:lil} with the appropriate $\beta >1$ we now deduce that the energy of the path $\Gamma$ linking $\rho$ to $B_{r}^{c}$ is of order
\[ \mathsf{R_{eff}}(\rho \leftrightarrow B_{r}^{c})  \leq \sum_{h=1}^{r} h^{-2} h^{\beta} \big(\log h\big)^{C_{1}} \leq r^{\beta-1} \big( \log r \big)^{C_{2}}\]
for some $C_{1},C_{2}>0$ as $r\to\infty$.
In particular, the resistance exponent $\mathfrak{r}$ (if it is well-defined) satisfies $ \mathfrak{r} \leq \beta-1 = \frac{2- \alpha}{\alpha-1}$. However, this  bound on the resistance becomes trivial in the regime $\alpha \in(1;3/2]$  since then $\beta \geq 2$ and we obtain a super-linear upper bound on the resistance... which is trivially at most $r$!
\end{remark}

% \begin{remark} It is possible to use Theorem \ref{thm:main} instead of Theorem \ref{thm:maindual} when analyzing resistances by first constructing random paths in the dual, and then turning these into random paths in the primal by rerouting them around each face that they go through. Since the largest face within the first $n$ levels of $\Cinfty$ can easily be shown to have degree at most logarithmic in $n$, this rerouting procedure costs at most a polylogarithmic factor. Unlike the proof above, this argument does not generalize to the causal carpet, which has polynomially large faces.
% \end{remark}

\subsection{Spectral dimension and diffusivity (Theorem \ref{thm:spectral})}

We can now prove Theorem \ref{thm:spectral}. 
\proof[Proof of Theorem \ref{thm:spectral}] 
% For any graph $G=(V,E)$ and any vertex $x\in V$ we denote by $B_{G}(x,r)$ the graph ball of radius $r$ around $x$ in $G$. 
% We write $P^{n}_{G}(x,y)$ for the $n$-step transition probabilities of the simple  random walk on $G$, and write $\mathbf{P}_{G,x}$ for the law of the random walk $(X_n)_{n\geq0}$ on $G$ started at $x$.
 We denote by $P^{n}(x,y)$ the $n$-step transition probabilities of the simple random walk $(X_n)_{n\geq0}$ on the graph $\Cinfty$. Recall that $ \mathrm{P}_{ \Cinfty,\rho}$ is the law of the random walk on $ \Cinfty$ started from $\rho$ and recall also that $B_{r}$ denote the ball of radius $r$ around the origin vertex $\rho \in \Cinfty$. We will split the proof of Theorem \ref{thm:spectral} into lower and upper bounds for return probabilities and typical displacements. As we will see, the upper bound for the return probability is a simple consequence of our resistance estimates (Theorem \ref{thm:resistance}) while the upper bound on the typical displacement is a standard application of the Varopoulos--Carne heat kernel bound for polynomially growing graph. Let us proceed. 
% while  for every $x\in V$ and $n\geq 1$ by Cauchy-Schwarz. 
 % and we have that
% \[P^n(x,y) \leq \sqrt{\frac{\deg y}{ \deg x}\, P^{2n}(x,x)}\]
% \[\]
% for every $x,y \in V$ and $n\geq 0$.
\medskip

\noindent (\textbf{Return probability upper bound.}) Recall that $\deg(\rho) \mathsf{R_{eff}}\bigl(\rho \leftrightarrow B_r^c \bigr)$ is equal to the expected number of times that the random walk started at $\rho$ visits $\rho$ before first leaving $B_r$. By the spectral decomposition for reversible Markov chains (see \cite[Lemma 12.2]{MCMT2E}) we know that $P^{2n}(x,x)$ is a decreasing function of $n$ for every vertex $x$ of $\Cinfty$.  
Hence letting $\tau_r$ be the first time that the random walk visits $B_r^c$,  we have the bound
\[
(n+1) P^{2n}(\rho,\rho)\leq \sum_{m=0}^n P^{2m}(\rho,\rho) \leq \mathbf{E}_{\Cinfty,\rho}\left[\sum_{m=0}^{\tau_{2n}} {1}\bigl( X_m = \rho\bigr) \right] = \deg(\rho)\Reff\bigl(\rho \leftrightarrow B_{2n}^c \bigr).
\]
Thus, applying Theorem \ref{thm:resistance} yields that
% This leads generally to the (rather weak) bound $d_s \geq 2-2\mathfrak{r}$, assuming that both exponents exist. In particular, in our setting, applying this bound with $G=\Cinfty$ and $x=\rho$ yields that, by Theorem \ref{thm:resistance}, there exists a constant $C$ such that
\begin{equation}
\label{eq:Pupper}
P^{2n}(\rho,\rho) \leq n^{-1} e^{C\sqrt{ \log n}}
\end{equation}
for all sufficiently large $n$ almost surely.

To obtain a similar bound for odd $n$, we 
use the well-known fact that return probabilities are log-convex in the sense that $P^{n+m}(\rho,\rho) \leq \sqrt{P^{2n}(\rho,\rho)P^{2m}(\rho,\rho)}$ for every $n,m\geq 0$ \cite[Lemma 3.20]{aldous-fill-2014}: 
% , which follows from Cauchy-Schwarz: Letting $\langle f,g \rangle$ denote the inner product defined by $\langle f,g \rangle = \sum_v f(v) g(v) \deg(v)$,  with respect to which $P$ is self-adjoint, we have that
% \begin{multline*}
% P^{n+m}(\rho,\rho) = \deg(\rho)^{-1}\langle P^{n+m} \mathbf{1}_\rho,\mathbf{1}_\rho \rangle  = \deg(\rho)^{-1} \langle P^{n} \mathbf{1}_\rho, P^{m}\mathbf{1}_\rho \rangle 
% \\\leq \deg(\rho)^{-1} \sqrt{\langle P^{n} \mathbf{1}_\rho, P^{n}\mathbf{1}_\rho \rangle\langle P^{m} \mathbf{1}_\rho, P^{m}\mathbf{1}_\rho \rangle} = \sqrt{P^{2n}(\rho,\rho)P^{2m}(\rho,\rho)}.
% \end{multline*}
Applying this fact together with \eqref{eq:Pupper} we obtain that
\begin{equation}
\label{eq:Pupper2}
P^{n}(\rho,\rho) \leq \sqrt{P^{2\lfloor n/2 \rfloor}(\rho,\rho)P^{2\lceil n/2 \rceil}(\rho,\rho)} \leq 2 n^{-1} e^{C\sqrt{ \log n}}
\end{equation}
for all sufficiently large $n$ almost surely.

\medskip
\noindent 
% (\textbf{$\mathfrak{g}<\infty \Rightarrow \nu \leq 1/2$}.)
(\textbf{Typical displacement upper bound.})
 Recall the classical Varopulous-Carne bound \cite[Section 13.2]{LP10}, which implies that for every vertex $x$ of $\Cinfty$ and every $n\geq 1$ we have that
\[ P^n(\rho,x) \leq 2\sqrt{\frac{\deg(x)}{\deg(\rho)}} \exp\left[- \frac{1}{2n}\mathrm{d}^{\Cinfty}_\mathrm{gr}(\rho,x)^2 \right]  \leq 2\deg(x) \exp\left[- \frac{1}{2n}\mathrm{d}^{\Cinfty}_\mathrm{gr}(\rho,x)^2 \right].\]
Observe that, since every vertex of $\Cinfty$ at height $n\geq 1$ has at most three edges emanating from it that connect to vertices at height less than or equal to $n$, we have that  $ \sum_{x \in B_n} \deg(x) \leq 4 \cdot \# B_{n+1}$. Thus, it follows from \eqref{eq:volumegrowth} that there exists a constant $C$ such that  $\sum_{x \in B_n} \deg(x) \leq n^2 \big(\log n\big)^{C}$ for all sufficiently large $n$ almost surely. By the last display $\sum_n  \mathrm{P}_{ \Cinfty, \rho}(X_n \notin B_{\sqrt{5 n \log n}} )<\infty$ for almost all realizations of $ \Cinfty$. It follows by Borel--Cantelli (under $ \mathrm{P}_{\Cinfty,\rho}$) that
\begin{equation}
\label{eq:VC}
\limsup_{n\to\infty} \frac{ \mathrm{d}^{\Cinfty}_\mathrm{gr}(\rho,X_n)}{\sqrt{n\log n}} \leq \sqrt{5}
\end{equation}
almost surely.  This gives one side of the claim that $\nu=1/2$.

\medskip
\noindent (\textbf{Return probability lower bound}.) To get a lower bound on $P^{n}(\rho,\rho)$, first observe that
\begin{align*}P^{2n}(\rho,\rho) = \sum_{x\in V} P^n(\rho,x) P^n(x,\rho) = \sum_{x\in V} \frac{\deg(\rho)}{\deg(x)} P^n(\rho,x)^2.
\end{align*}
It follows that, for every $r\geq 0$,
\begin{align}
P^{2n}(\rho,\rho) \geq \sum_{x\in B_r} \frac{\deg(\rho)}{\deg(x)} P^n(\rho,x)^2 \geq 
% \mathbf{P}_{\Cinfty,\rho}\bigl(X_n \in B_r\bigr)^2 \left[\sum_{x\in B_r} \frac{\deg(x)}{\deg(\rho)}\right]^{-1} =
 \deg(\rho) \mathbf{P}_{\Cinfty,\rho}\bigl(X_n \in B_r\bigr)^2  \frac{1}{\sum_{x \in B_r} \deg(x)}
\label{eq:dsleq2nugproof}
\end{align}
where the second inequality follows by Cauchy--Schwarz. 
% Taking this inequality with $r=n^{\nu+\eps}$ leads in general to the inequality $d_s \leq 2 \nu \mathfrak{g}$, assuming that all three exponents exist.
Taking $r=\lfloor \sqrt{5 n \log n} \rfloor$  we deduce by \eqref{eq:volumegrowth} and the above application of Varopoulos--Carne that there exists a constant positive $C$ such that
\begin{align*}
P^{2n}(\rho,\rho) &\geq \deg(\rho) \mathbf{P}_{\Cinfty,\rho}\bigl(X_n \in B_{r} \bigr)^2 \left(4 \# B_{r +1 }\right)^{-1} \\& \geq  \deg(\rho) \bigl(1-o(1)\bigr)n^{-1} \big(\log n \big)^{-C}  \geq  n^{-1}\big(\log n\big)^{-2C}
\end{align*}
almost surely as $n\to\infty$. Together with \eqref{eq:Pupper} this implies that $d_s(\Cinfty)$ exists and equals $2$ a.s. 

\medskip
\noindent (\textbf{Typical displacement lower bound.})
Finally, to bound the probability that the displacement of the random walk is smaller than $n^{1/2-\eps}$, we rearrange \eqref{eq:dsleq2nugproof} and apply \eqref{eq:volumegrowth} and \eqref{eq:Pupper2} to deduce that there exists a constant $C$ and some almost surely finite  $n_0$ and $r_0$ such that
\[
 \mathbf{P}_{\Cinfty,\rho}\bigl( X_n \in B_r\bigr)^2 \leq r^2 n^{-1} e^{C\sqrt{\log n}} \big( \log r\big)^{C}
\]
for every $n\geq n_0$ and $r\geq r_0$, and it follows immediately that
\[
\lim_{n\to\infty} \mathbf{P}_{\Cinfty,\rho}\Bigl( X_n \in B_{n^{1/2-\eps}}\Bigr) =0
\]
for every $\eps>0$ a.s. Together with \eqref{eq:VC} this implies that $\nu(\Cinfty)$ exists and equals $1/2$ a.s.
 \endproof

\section{Extensions and comments} \label{sec:comments}

\subsection{Back to Causal Triangulations} \label{sec:causaltrig}
\begin{definition} A causal triangulation is a finite rooted triangulation of the sphere such that the maximal distance to the origin of the map is attained by a single vertex, and for each $k\geq 0$ the subgraph induced by the set of vertices at distance $k \geq 0$ from the origin is a cycle.
\end{definition}
In this work, we focused on the model $ \mathsf{Causal}(\tau)$ which is obtained from a plane tree $\tau$ by adding the horizontal connections between vertices are the same generation. As explained in Figure \ref{fig:causal-arbre}, to get a causal triangulation one needs also to  triangulate the faces from their top right corners. (Furthermore, one must add a point at the top of the graph to triangulate the top most face, even if this face is already a triangle). As explained in \cite{DJW10} this construction is a bijection between the set of finite rooted plane trees and the set of finite causal triangulations.
 % together with an oriented rooted edge (whose origin is the origin of the map) coming from the root edge of the tree.
%  See Figure \ref{fig:CDTbis} below for an alternative view of the construction.
%
%\begin{figure}[!h]
% \begin{center}
% \includegraphics[width=10cm]{}
% \caption{\small{ \label{fig:CDTbis} An alternative view of the bijection between plane trees and causal triangulations.}}
% \end{center}
% \vspace{-0.75em}
% \end{figure}

When this procedure is applied to the uniform infinite random tree $ \mathbb{T}_{\infty}$ (which is distributed as a critical geometric Galton-Watson tree conditioned to survive forever) the resulting map $ \mathsf{CauTri}( \mathbb{T}_{\infty})$ is the uniform infinite causal triangulation (UICT) as considered in \cite{DJW10,SYZ13}. The large scale geometries of $ \mathsf{CauTri}( {T}_{\infty})$ and $\mathsf{Causal}( {T}_\infty)$ are very similar and it is easy to adapt the results of the present paper to this setting.

Moreover, while it is certainly possible to simply run our arguments again to analyze $\mathsf{CauTrig}(T_\infty)$ instead of $\mathsf{Causal}(T_\infty)$, it is also possible to simply \emph{deduce} 
 versions of each of our main theorems concerning $\mathsf{CauTri}(T_\infty)$    from the statements that we give. Indeed, using the fact that the largest face in the first $n$ levels of $\mathsf{Causal}(T_\infty)$ 
 % (and hence the `biggest shortcut' added when passing from $\mathsf{Causal}(T_\infty)$ to $\mathsf{CauTrig}(T_\infty)$)
  is at most logarithmically large in $n$ yields that distances within the first $n$ levels of $\mathsf{CauTrig}(T_\infty)$ are smaller than those in $\mathsf{Causal}(T_\infty)$ by at most a logarithmic factor. Moreover, an analogue of the resistance upper bound of Theorem \ref{thm:resistance} follows immediately since $\mathsf{Causal}(T_\infty)$ is a subgraph of $\mathsf{CauTri}(T_\infty)$.

   We let the reader stare at the two beautiful pictures in Figure \ref{fig:beauty}.

\begin{figure}[!h] 
 \begin{center} 
 \includegraphics[width=6.5cm]{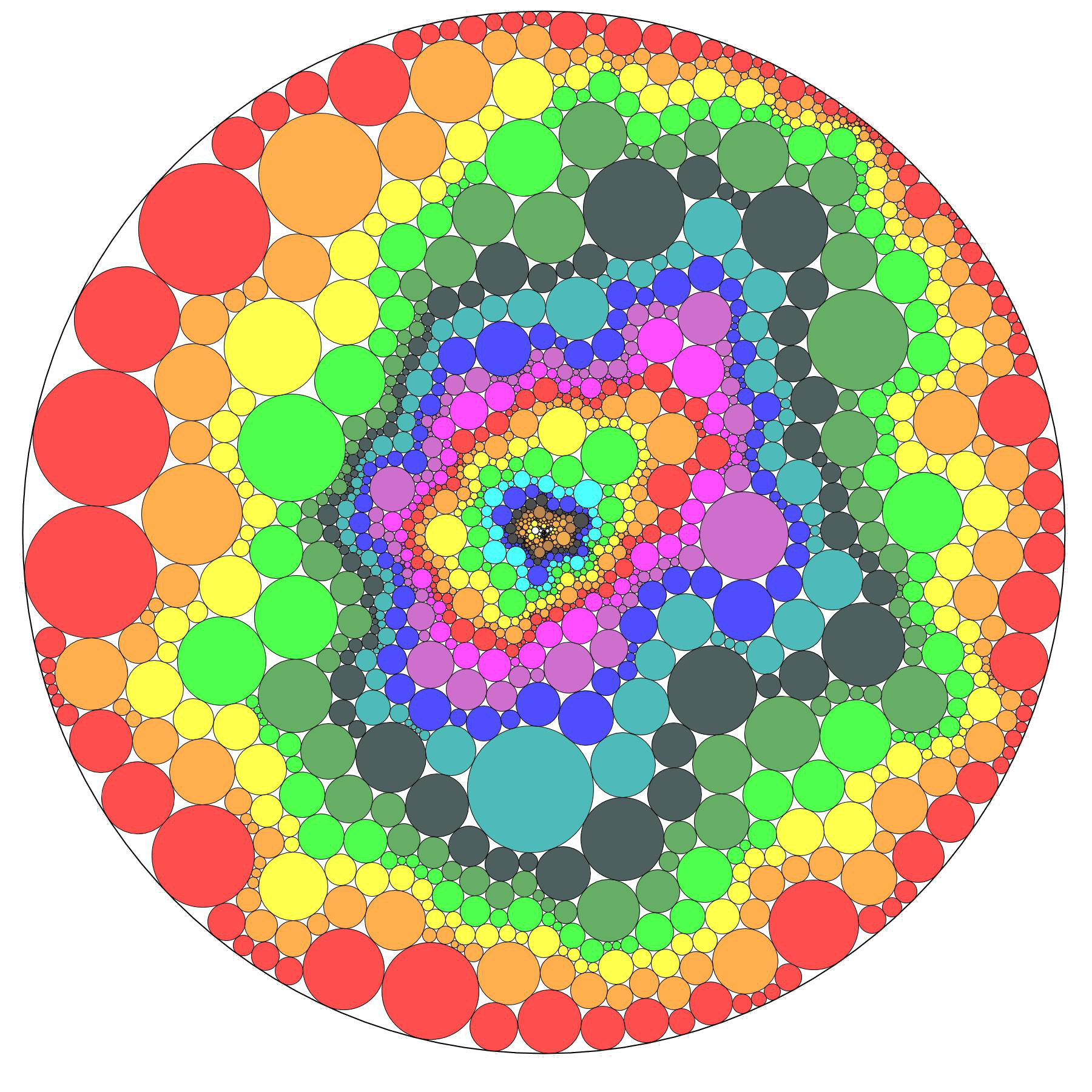}\hspace{1cm}  \includegraphics[width=6.5cm]{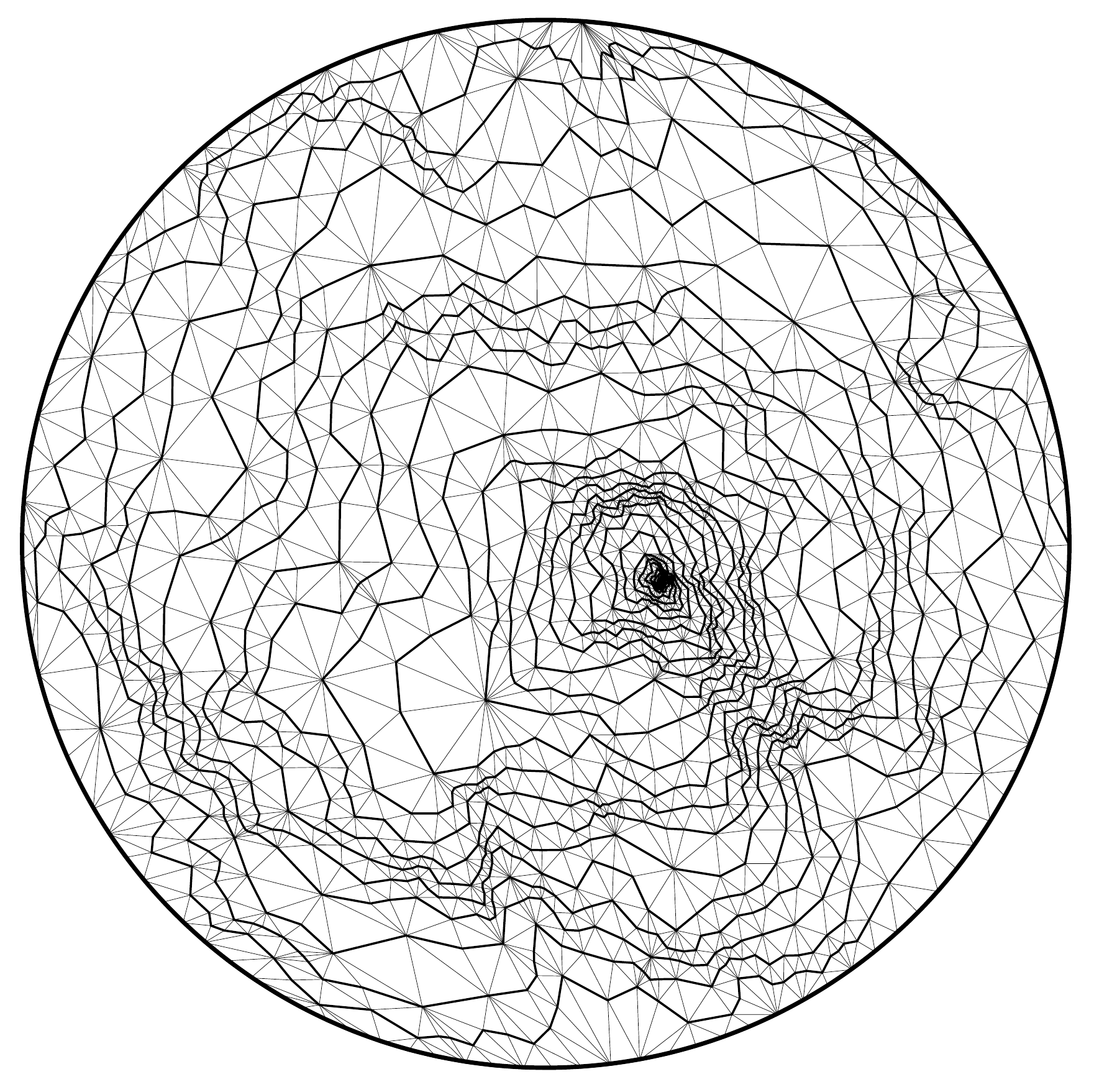}
 \caption{Two discrete uniformizations of a large ball in two different random causal triangulations. On the left, using the circle packing (so that the origin is at the center); on the right using Tutte's barycentric embedding (so that the vertices on the boundary are evenly spread). The generations are represented with colors on the left and using a spiral contour on the right. The circle packing was generated by Thomas Budzinski using Ken Stephenson's software.}
 \label{fig:beauty}
 \end{center}
 \end{figure}

\subsection{Causal carpets}
\label{sec:carpet}
Finally, we want to stress that our results can be adapted to various other graphs obtained from trees by ``adding the horizontal connections''. For example one could decide, when transforming a plane tree $\tau$ to add the horizontal connections but only keeping the extreme most edges of each branch point, see Figure \ref{kri1}.
\begin{figure}[!h]
% \vspace{0.7em}
 \begin{center}
 \includegraphics[width=10cm]{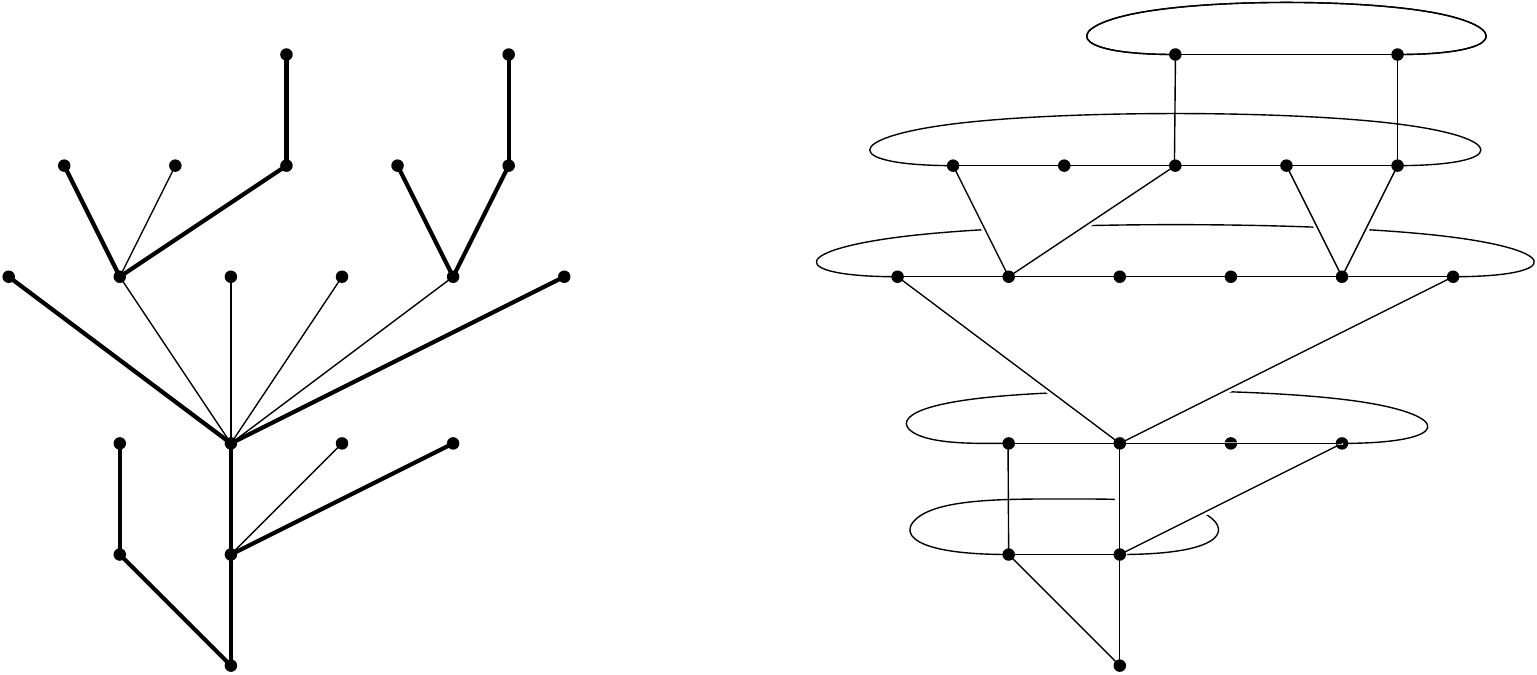}
 \caption{ \small{\label{kri1} The causal carpet is obtained from the causal map by deleting all but the extreme-most vertical edges emanating upwards from each vertex.
 % Another type of tree transformation to which our geometric (but not spectral!) results applies.}
 }}
 \end{center}
 \vspace{-.75em}
 \end{figure}
 We call this graph the \textbf{causal carpet} associated to the tree. The geometry of the $\alpha$-stable causal carpet is very different from the maps studied in this work, since the faces of this map may now have very large degree.  In spite of this, the block-renormalisation methodology developed in Section \ref{sec:halfplane} carries through to this model, and analogs of Theorem \ref{thm:geometrystable}, as well as of the resistance exponent bound
\[
\mathfrak{r} \leq \frac{2-\alpha}{\alpha-1}
\]
  hold true. 
  Alas, this resistance bound becomes trivial precisely at the most interesting value of $\alpha=3/2$, for which a graph closely related to the causal carpet can be realized as a subgraph of the UIPT via Krikun's skeleton decomposition or via the recent construction of \cite{curien2018skeleton}.
  Indeed, it remains open to prove any sublinear resistance upper bound for the UIPT. Such a bound would (morally) follow from the $\alpha=3/2$ case of the following conjecture.

\begin{conjecture}
\label{conj:carpets}
Let $\mu$ be critical and satisfy \eqref{eq:defstable} for some $\alpha\in (1,2)$. Then the resistance growth exponent $\mathfrak{r}$ of the associated causal carpet exists and satisfies $0<\mathfrak{r}<1$ almost surely. In particular, the causal carpet is recurrent almost surely.
\end{conjecture}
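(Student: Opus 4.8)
I would prove the three parts of the statement --- recurrence, the bounds $0<\mathfrak{r}<1$, and the existence of $\mathfrak{r}$ --- in order of increasing difficulty. Recurrence is the softest and is in fact subsumed by the rest, since once $\mathfrak{r}>0$ is known we have $\mathsf{R_{eff}}(\rho\leftrightarrow B_r^c)\to\infty$ and recurrence follows; but it is worth recording that it also has a short independent proof. Unlike the causal map $\Cinfty$, whose vertex degrees are unbounded as soon as $\mu$ has unbounded support (in particular always in the $\alpha$-stable case), the causal carpet of \emph{any} plane tree has uniformly bounded degree --- at most five at each vertex: one edge to its parent, at most two to its extreme children, and at most two horizontal edges --- precisely because the high-degree vertical edges have been deleted. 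Since the causal carpet of $T_\infty$ is a local limit of the bounded-degree planar causal carpets of conditioned finite $\mu$-Galton--Watson trees, the Benjamini--Schramm recurrence theorem for distributional limits of bounded-degree finite planar graphs then yields recurrence, modulo the standard care in passing from a uniformly random root to the distinguished root of $T_\infty$ (recurrence being a root-independent property). The real work, however, is quantitative, and that is where I would concentrate.

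For the upper bound, the block-renormalisation machinery of Section~\ref{sec:halfplane} together with the random-path argument of Theorem~\ref{thm:resistance} carries over to the carpet essentially unchanged, yielding $\mathfrak{r}\le\tfrac{2-\alpha}{\alpha-1}$ and hence $\mathfrak{r}<1$ whenever $\alpha\in(3/2,2)$. For $\alpha\in(1,3/2]$ the trivial bound $\mathfrak{r}\le1$ has to be beaten by hand, and here my plan is to show that a block of height $k$ in the carpet still admits $k^{\gamma+o(1)}$ edge-disjoint bottom-to-top crossings --- equivalently, that its dual width is $k^{\gamma+o(1)}$ --- for some $\gamma=\gamma(\alpha)>0$ in spite of its macroscopic faces, and then to spread a unit current over these crossings and invoke the method of random paths \cite[Ch.~2.5]{LP10} to conclude $\mathfrak{r}\le1-\gamma$. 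The genuinely new ingredient is the dual-width estimate for a carpet block, which should again be accessible through a renormalisation scheme, now keeping track of how large faces obstruct, but do not entirely block, the passage of disjoint vertical paths.

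The heart of the matter, and the step I expect to be the main obstacle, is the polynomial \emph{lower} bound $\mathfrak{r}>0$. The Nash--Williams inequality applied to the cutsets separating consecutive levels is useless: such a cutset has size of order $\#\partial [T_\infty]_k\asymp k^{\beta}$ with $\beta=\tfrac1{\alpha-1}>1$, so $\sum_k 1/\#\partial [T_\infty]_k<\infty$ and one obtains only a bounded lower bound --- which is exactly why recurrence is invisible at the level of generations and why it forces the degree bound above. The genuine bottlenecks are the bushy vertices: a vertex of out-degree $d$ at height $\approx k$ funnels all the current entering its $d$ children from below through just two vertical edges, and near level $k$ the largest out-degrees are themselves of order $k^{\beta}$. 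To make this quantitative I would set up a decomposition of the carpet directly from the Galton--Watson structure, realising it as an explicit random ``tree of cycles'' --- a causal analogue of a stable looptree, and the structure appearing in Krikun's skeleton decomposition of the UIPT \cite{Kri04,curien2018skeleton} when $\alpha=\tfrac32$ --- and then compute the effective resistance of such a structure by series--parallel network reductions, reducing the problem to a recursive distributional equation across scales whose solution is shown to satisfy $\mathsf{R_{eff}}(\rho\leftrightarrow\partial [T_\infty]_r)=r^{\mathfrak{r}+o(1)}$ with $\mathfrak{r}>0$. A cruder fallback is to exhibit, for each dyadic scale $2^j$, a cutset separating $\rho$ from infinity of size $2^{j(1-\gamma')}$ witnessing the worst bottleneck in that shell, and to sum $\sum_j 2^{-j(1-\gamma')}=\infty$. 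In either approach the difficulty lies in controlling the \emph{joint} law of the face degrees and of the combinatorial structure of the carpet \emph{simultaneously at all scales}; this is a close relative of the still-open problem of bounding effective resistances in the UIPT, and --- through Krikun's skeleton decomposition --- the case $\alpha=\tfrac32$ of the conjecture would morally resolve it.

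Finally, the \emph{existence} of $\mathfrak{r}$ (as opposed to merely a liminf and a limsup) would be immediate if the upper and lower bounds above were made to coincide. Failing that, I would argue from an approximate self-similarity of the carpet: conditionally on its structure at height $k$, the portion above height $k$ is a bounded-degree decoration of an essentially i.i.d.\ family of carpet-pieces, and this should yield an almost-supermultiplicative relation $\mathsf{R_{eff}}(\rho\leftrightarrow\partial [T_\infty]_{km})\gtrsim\mathsf{R_{eff}}(\rho\leftrightarrow\partial [T_\infty]_k)\cdot R_{k,km}$, with $R_{k,km}$ the resistance of a $k$-to-$km$ annular piece; taking logarithms and applying a Kingman-type subadditive argument then gives convergence of $\log\mathsf{R_{eff}}(\rho\leftrightarrow B_r^c)/\log r$. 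The absence of exact self-similarity --- the conditioning on generation sizes, the spine, and the heavy-tailed face degrees that must be averaged over --- is what makes even this last step delicate. Once $0<\mathfrak{r}<1$ and the existence of $\mathfrak{r}$ are in hand, recurrence follows, completing the proof of the conjecture.
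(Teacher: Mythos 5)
The statement you are addressing is labelled a \emph{conjecture} in the paper, not a theorem: the authors explicitly record it as open, remark just before it that ``it remains open to prove any sublinear resistance upper bound for the UIPT,'' and in the ``Added in proof'' note that even after Gwynne--Miller the conjecture ``remains open.'' There is therefore no proof in the paper to compare against, and --- more to the point --- your write-up is not a proof either. You are admirably candid about this: you call the lower bound $\mathfrak{r}>0$ ``the step I expect to be the main obstacle,'' describe the dual-width estimate for carpet blocks as a ``genuinely new ingredient,'' label one route a ``cruder fallback,'' and concede that the self-similarity needed for existence of the exponent is ``delicate.'' Each of these is exactly where the argument would have to be, and none of them is carried out. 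What you have written is a plausible research programme, not a demonstration. The parts that \emph{are} in the paper --- the bound $\mathfrak{r}\le\frac{2-\alpha}{\alpha-1}$ for the carpet, which is only sublinear when $\alpha\in(3/2,2)$, and the observation that this leaves the most interesting case $\alpha=3/2$ untouched --- you reproduce accurately, but they do not add up to the conjecture.

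Beyond the acknowledged gaps, your ``short independent proof'' of recurrence is also incomplete as stated. The degree bound is correct (a vertex of the carpet meets at most one parental edge, at most two extreme-child edges, and at most two horizontal edges, hence degree at most five), and the Benjamini--Schramm theorem does say that a distributional limit of finite planar graphs of uniformly bounded degree, \emph{rooted uniformly at random}, is almost surely recurrent. But the causal carpet of $T_\infty$, rooted at $\rho$, is the local limit of carpets of Galton--Watson trees conditioned on large height, rooted at the tree root --- a highly non-uniform root --- and the resulting random rooted graph is not unimodular (it has a distinguished spine). So it is not itself a Benjamini--Schramm limit, and the observation that ``recurrence is root-independent'' does not bridge the gap, because the unimodular relative of $T_\infty$ (the local limit of GW trees conditioned on total size, uniformly rooted) is a \emph{different} random unrooted graph, not a re-rooting of the same one. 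Passing recurrence from one to the other would require a genuine argument, for instance a coupling of the two laws away from the root or a separate Nash--Williams-type estimate, and you should not treat it as ``standard care.'' In short: every ingredient you would need --- the dual-width lower bound on carpet blocks for $\alpha\le 3/2$, the cutset or tree-of-cycles analysis giving $\mathfrak{r}>0$, the subadditivity giving existence, and even the unimodularity issue behind the Benjamini--Schramm shortcut --- is a real open problem, which is precisely why the paper states this as a conjecture rather than a theorem.
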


%It would be interesting to investigate the dependence of $\mathfrak{r}$ on $\alpha$ numerically.

Despite the sub-optimality of our spectral results in this context, the \emph{geometric} results obtained by our methods are sharp. The applications of our methodology to uniform random planar triangulations will be explored further in 
a future work. \medskip 

Finally, we remark that a model essentially equivalent to the uniform CDT arises as a certain $\gamma \downarrow 0$ limit of Liouville Quantum Gravity (LQG) in the mating-of-trees framework \cite{wedges,ghs-dist-exponent}. (More specifically, it arises as the $\gamma \downarrow 0$ limit of the mated-Galton-Watson-tree model of LQG, in which the correlation of the encoding random walks tends to $-1$). Thus, further study of the uniform CDT may prove useful for understanding LQG in the small $\gamma$ regime, which has recently been of great interest following Ding and Goswami's refutation of the Watabiki formula \cite{ding2016upper}.\medskip

\textbf{Added in proof.} Gwynne and Miller \cite{gwynne2017random} recently proved sub-polynomial resistance estimates for the UIPT. Conjecture \ref{conj:carpets} and the analogous question for the skeleton of the UIPT remain open, however.
% However, their methods are rather rigid and do not yield analog statements for the UIPQ say. Our questions are thus still open in their full generality.

\linespread{1}
\bibliographystyle{siam}
\bibliography{bibli}

\end{document}